\tikzset{
labl1/.style={anchor=north, rotate=90, inner sep=1.2mm}
}
\newtheorem{theorem}{Theorem}[section]
\newtheorem{lemma}[theorem]{Lemma}
\newtheorem{proposition}[theorem]{Proposition}
\newtheorem{definition}[theorem]{Definition}
\newtheorem{remark}[theorem]{Remark}
\newtheorem{example}[theorem]{Example}
\newcommand{\iuv}[2]{\mathfrak{i}_{\scalebox{0.8}{$\scriptstyle #1$}\mkern-0.7mu\scalebox{0.8}{$\scriptstyle , $}\mkern-1mu\scalebox{0.8}{$\scriptstyle #2$}}}
\newcommand{\topo}{\mathbf{Top}}
\newcommand{\Uij}[2]{U_{\substack{\text{\scalebox{0.7}{$\mkern-1mu #1 \! \wedge\! #2$}}}}}
\newcommand{\Uijk}[3]{U_{\substack{\text{\scalebox{0.7}{$\mkern-1mu #1 \! \wedge\! #2 \!\wedge\! #3$}}}}}
\newcommand{\subsm}[2]{{#1}_{\substack{\text{\scalebox{0.8}{$\mkern-0.8mu #2$}}}}}
\newcommand{\op}{{\substack{\text{\scalebox{0.7}{$\operatorname{op}$}}}}}
\newcommand{\subop}{{\subseteq}_{\operatorname{op}}}
\newcommand{\Rel}[1]{\mathcal{R}_{\substack{\text{\scalebox{0.8}{$#1$}}}}}
\newcommand{\Co}[1]{{#1}_{\substack{\text{\scalebox{0.7}{$\mkern-1.5mu 0$}}}}}
\newcommand{\Cn}[1]{{#1}_{\substack{\text{\scalebox{0.7}{$ \mkern-1.5mu 1$}}}}}
\newcommand{\Cns}[1]{{#1}_{{}_{\substack{\text{\scalebox{0.7}{$ \mkern-1.5mu 1$}}}}}}
\newcommand{\mbf}[1]{\mathbf{#1}}
\newcommand{\etaij}[2]{\eta_{\substack{\text{\scalebox{0.7}{${\mkern-1mu #1 ,\![\mkern-1mu #1\mkern-1mu ,\mkern-1mu #2\mkern-1mu]}$}}}}}
\newcommand{\etaji}[2]{\eta_{\substack{\text{\scalebox{0.7}{${ \mkern-1mu  #2 ,\![\mkern-1mu #2\mkern-1mu ,\mkern-1mu #1\mkern-1mu ]}$}}}}}
\newcommand{\etaaji}[2]{\eta_{\substack{\text{\scalebox{0.7}{${ \mkern-1mu  #2 ,\![\mkern-1mu #1\mkern-1mu ,\mkern-1mu #2\mkern-1mu ]}$}}}}}
\newcommand{\tauij}[2]{\tau_{\substack{\text{\scalebox{0.7}{$\mkern-1.2mu [\mkern-1mu #1\mkern-1mu,\mkern-1mu #2\mkern-1mu]$}}}}}
\newcommand{\tauji}[2]{\tau_{\substack{\text{\scalebox{0.7}{$\mkern-1.2mu [\mkern-1mu #2 \mkern-1mu,\mkern-1mu #1\mkern-1mu]$}}}}}
\newcommand{\etaijk}[4]{\eta_{\substack{\text{\scalebox{0.7}{${[\mkern-1mu #2 \mkern-1mu,\mkern-1mu #1\mkern-1mu ],\![\mkern-1mu #2 \mkern-1mu ,\mkern-1mu #3 \mkern-1mu,\mkern-1mu #4\mkern-1mu]}$}}}}}
\newcommand{\etaiijk}[3]{\eta_{\substack{\text{\scalebox{0.7}{${\mkern-1mu #1 \mkern-1mu,\![\mkern-1mu #1 \mkern-1mu ,\mkern-1mu #2 \mkern-1mu,\mkern-1mu #3\mkern-1mu]}$}}}}}
\newcommand{\tauijk}[4]{\tau_{{\substack{\text{\scalebox{0.7}{$\! #1$}}}}_{\substack{\text{\scalebox{0.8}{$[\mkern-1mu #2 \mkern-1mu,\mkern-1mu #3\mkern-1mu]$}}}}}}
\newcommand{\limi}[1]{\operatorname{lim}\!#1}
\newcommand{\Goi}[2]{\Co{#1} \mkern-1.7mu (\mkern-1mu #2 \mkern-1mu)}
\newcommand{\Goij}[3]{\Co{#1} \mkern-1.7mu (\mkern-1mu #2 \mkern-1mu ,\mkern-1mu #3 \mkern-1mu )}
\newcommand{\Goijk}[4]{\Co{#1} \mkern-1.7mu (\mkern-1mu #2 \mkern-1mu ,\mkern-1mu #3 \mkern-1mu ,\mkern-1mu #4 \mkern-1mu )}
\newcommand{\Coi}[2]{#1 \mkern-1.7mu (\mkern-1mu #2 \mkern-1mu)}
\newcommand{\Coij}[3]{#1\mkern-1.7mu (\mkern-1mu #2 \mkern-1mu ,\mkern-1mu #3 \mkern-1mu )}
\newcommand{\Coijk}[4]{#1 \mkern-1.7mu (\mkern-1mu #2 \mkern-1mu ,\mkern-1mu #3 \mkern-1mu ,\mkern-1mu #4 \mkern-1mu )}
\newcommand{\glI}[1]{\mathbf{Gl}(\mathrm{#1})}
\newcommand{\Gnij}[2]{\Cn{#1} \mkern-1.7mu (\mkern-1.2mu #2  \mkern-1.2mu )}
\newcommand{\Cnij}[2]{#1 \! ( \mkern-1.2mu #2  \mkern-1.2mu )}
\newcommand{\subsc}[2]{{}_{\substack{\text{\scalebox{0.6}{$\mkern-1.8mu #1$}}}} \mkern-1.8mu #2}
\newcommand{\randi}[2]{#1_{\substack{\text{\scalebox{0.7}{$\mkern-1mu \mkern-1.2mu #2\mkern-1.2mu$}}}}}
\newcommand{\randij}[3]{#1_{\substack{\text{\scalebox{0.7}{$\mkern-1mu [\mkern-1.2mu #2\mkern-1.2mu, \mkern-1.2mu #3 \mkern-1.2mu]$}}}}}
\newcommand{\randijk}[4]{#1_{\substack{\text{\scalebox{0.7}{$\mkern-1mu [\mkern-1.2mu #2\mkern-1.2mu, \mkern-1.2mu #3 \mkern-1.2mu,\mkern-1.2mu #4 \mkern-1.2mu]$}}}}}
\newcommand{\randji}[3]{#1_{\substack{\text{\scalebox{0.7}{$\mkern-1mu [\mkern-1.2mu #2\mkern-1.2mu, \mkern-1.2mu #3 \mkern-1.2mu]$}}}}}
\newcommand{\idij}[2]{{\operatorname{id}}_{\substack{\text{\scalebox{0.7}{$\mkern-1mu [\mkern-1.2mu #1\mkern-1.2mu, \mkern-1.2mu #2 \mkern-1.2mu]$}}}}}
\newcommand{\idijk}[3]{{\operatorname{id}}_{\substack{\text{\scalebox{0.7}{$\mkern-1mu [\mkern-1.2mu #1\mkern-1.2mu, \mkern-1.2mu #2 \mkern-1.2mu, \mkern-1.2mu #3 \mkern-1.2mu]$}}}}}
\newcommand{\dindi}[3]{{#1}_{{\substack{\text{\scalebox{0.8}{$\mkern-1mu #2$}}}}_{{\mkern-1mu}_{\substack{\text{\scalebox{0.7}{$\mkern-1mu #3$}}}}}}}
\newcommand{\dindij}[4]{{#1}_{{\substack{\text{\scalebox{0.8}{$\mkern-1mu #2$}}}}_{{\mkern-1mu}_{\substack{\text{\scalebox{0.7}{$\mkern-1mu [\mkern-1.2mu #3\mkern-1.2mu, \mkern-1.2mu #4 \mkern-1.2mu]$}}}}}}}
\newcommand{\dindijk}[5]{{#1}_{{\substack{\text{\scalebox{0.8}{$\mkern-1mu #2$}}}}_{{\mkern-1mu}_{\substack{\text{\scalebox{0.7}{$\mkern-1mu [\mkern-1.2mu #3\mkern-1.2mu, \mkern-1.2mu #4 \mkern-1.2mu, \mkern-1.2mu #5 \mkern-1.2mu]$}}}}}}}
\newcommand{\dindiv}[4]{{#1}_{{\substack{\text{\scalebox{0.8}{$\mkern-1mu #2$}}}}_{{\mkern-1mu}_{\substack{\text{\scalebox{0.7}{$\mkern-1mu #3$}}}_{{}_{\substack{\text{\scalebox{0.6}{$\mkern-1mu #4$}}}}}}}}}
\newcommand{\dindgij}[5]{{#1}_{{\substack{\text{\scalebox{0.8}{$\mkern-1mu #2$}}}_{{\mkern-1mu}_{{\substack{\text{\scalebox{0.6}{$\mkern-1mu #3$}}}_{{}_{\substack{\text{\scalebox{0.6}{$\mkern-1mu [\mkern-1.2mu #4\mkern-1.2mu, \mkern-1.2mu #5 \mkern-1.2mu]$}}}}}}}}}}}
\newcommand{\dindgijk}[6]{{#1}_{{{\substack{\text{\scalebox{0.8}{$\mkern-1mu #2$}}}_{{\mkern-1mu}_{{\substack{\text{\scalebox{0.6}{$\mkern-1mu #3$}}}_{{}_{\substack{\text{\scalebox{0.6}{$\mkern-1mu [\mkern-1.2mu #4\mkern-1.2mu, \mkern-1.2mu #5 \mkern-1.2mu, \mkern-1.2mu #6 \mkern-1.2mu]$}}}}}}}}}}}}
\newcommand{\dindpi}[3]{{#1}_{{\substack{\text{\scalebox{0.8}{$\mkern-1mu #2$}}}}_{{{\mkern-1mu}_{\substack{\text{\scalebox{0.7}{$\mkern-1mu #3$}}}}}}'}}
\newcommand{\dindpij}[4]{{#1}_{{\substack{\text{\scalebox{0.8}{$\mkern-1mu #2$}}}}_{{\mkern-1mu}_{\substack{\text{\scalebox{0.7}{$\mkern-1mu [\mkern-1.2mu #3\mkern-1.2mu, \mkern-1.2mu #4 \mkern-1.2mu]$}}}}}'}}
\newcommand{\dindpijk}[5]{{\mkern-1mu #1}_{{\substack{\text{\scalebox{0.8}{$\mkern-1mu #2$}}}}_{{\mkern-1mu}_{\substack{\text{\scalebox{0.7}{$\mkern-1mu [\mkern-1.2mu #3\mkern-1.2mu, \mkern-1.2mu #4 \mkern-1.2mu, \mkern-1.2mu #5 \mkern-1.2mu]$}}}}}'}}
\newcommand{\dindsi}[3]{{#1}_{{\substack{\text{\scalebox{0.8}{$\mkern-1mu #2$}}}}_{{}_{\substack{\text{\scalebox{0.7}{$#3$}}}}}}}
\newcommand{\gcov}[1]{\bm{\mathcal C}_{\substack{\text{\scalebox{0.8}{$#1$}}}}}
\newcommand{\circc}{\! \circ  \!}
\newcommand{\suchthat}{\;\ifnum\currentgrouptype=16 \middle\fi|\;}
\newcommand\restr[2]{{
\left.\kern-\nulldelimiterspace 
#1 
\vphantom{\big|} 
\right|_{#2} 
}}
\newcommand*{\doublerightarrow}[2]{\mathrel{
		\settowidth{\@tempdima}{$\scriptstyle#1$}
		\settowidth{\@tempdimb}{$\scriptstyle#2$}
		\ifdim\@tempdimb>\@tempdima \@tempdima=\@tempdimb\fi
		\mathop{\vcenter{
				\offinterlineskip\ialign{\hbox to\dimexpr\@tempdima+1em{##}\cr
					\rightarrowfill\cr\noalign{\kern.5ex}
					\rightarrowfill\cr}}}\limits^{\!#1}_{\!#2}}}
\title{\bfseries A Categorical Perspective on Gluing}
\author{Sophie Marques and Damas Mgani}
\begin{document}

\begin{abstract}

This paper introduces the concept of gluing in a general category, enabling us to define categories that admit glued-up objects. To achieve this, we introduce the notion of a gluing index category. Subsequently, we provide an entirely abstract definition of a gluing data functor requiring only the given category to admit pushouts. We explore various characterizations of cones and limits over these functors. We introduce the concept of refined gluing, which in turn enables us to combine different gluing data effectively. Furthermore, we demonstrate that several categories of topological spaces admit glued-up objects. This, in turn, allows us to establish a concept of gluing covering and to prove that the collection of those coverings forms a Grothendieck topology.

\noindent \textbf{Keywords:} Gluing, topological spaces, refinement, category theory, functor, limit, Grothendieck topology, covering.

\noindent \textbf{2020 Math. Subject Class:} 14A15, 18F20, 18F60.

\noindent \begin{center}
\rm e-mail: \href{mailto:d smarques@sun.ac.za}{ smarques@sun.ac.za}

\it
Department of Mathematical Sciences, 
University of Stellenbosch, \\
Stellenbosch, 7600, 
South Africa\\
\&
NITheCS (National Institute for Theoretical and Computational Sciences), \\
South Africa \\ \bigskip

\rm e-mail: \href{mailto:d.mgani99@gmail.com}{d.mgani99@gmail.com}

\it
Department of Mathematical Sciences, 
University of Stellenbosch, \\
Stellenbosch, 7600, 
South Africa

\end{center} 

\end{abstract}

\setcounter{tocdepth}{3}
\maketitle
  \tableofcontents

\newpage

\begin{center} 
  \begin{tabular}{l p{11cm} }
  \textbf{CATEGORIES} & \\[0.5cm]
$ \mathbf{C}$& Category with the following components \\& $( \Co{\mbf{C}} ,\Cn{\mbf{C}}, \mathbf{d}_{\substack{\text{\scalebox{0.7}{$\mathbf{C}$}}}},\mathbf{c}_{\substack{\text{\scalebox{0.7}{$\mathbf{C}$}}}},\mathbf{e}_{\substack{\text{\scalebox{0.7}{$\mathbf{C}$}}}},\mathbf{m}_{\substack{\text{\scalebox{0.7}{$\mathbf{C}$}}}})$ where $ \Co{\mbf{C}}$ is a set whose elements are called objects in $\mathbf{C}$, $ \Cn{\mbf{C}}$ is a set whose elements are called morphisms in $\mathbf{C}$, $\mathbf{d}_{\substack{\text{\scalebox{0.7}{$\mathbf{C}$}}}}$ and $\mathbf{c}_{\substack{\text{\scalebox{0.7}{$\mathbf{C}$}}}}$ are maps from $ \Cn{\mbf{C}}$ to $ \Co{\mbf{C}}$, called domain and codomain respectively,  $\mathbf{e}_{\substack{\text{\scalebox{0.7}{$\mathbf{C}$}}}}$ is a map from $ \Co{\mbf{C}}$ to $ \Cn{\mbf{C}}$, called the identity, $\mathbf{m}_{\substack{\text{\scalebox{0.7}{$\mathbf{C}$}}}}$ is a map from the set\\& 
	 $\{(f,g)\in  \Cn{\mbf{C}}\times  \Cn{\mbf{C}}\;|\;\mathbf{d}_{\substack{\text{\scalebox{0.7}{$\mathbf{C}$}}}}(f)=\mathbf{c}_{\substack{\text{\scalebox{0.7}{$\mathbf{C}$}}}}(g)\}$ to $ \Cn{\mbf{C}}$, called composition. Categories are defined in this way in \cite{Joji}\\

  \textbf{SETS} & \\[0.5cm]

$U\times_V W$ & An arbitrarily chosen pullback of two given map $\phi: U \rightarrow V$ and $\phi: W \rightarrow V$ in some category $\mathbf{C}$, when it is not specified otherwise.\\
	
$\limi{\mathbf{G}}$ & An arbitrarily chosen terminal cone via the axiom of choice in the category of cones over some diagram $\mathbf{G}$ \\
	
$(\subsm{Q}{\mathbf{G}}, \subsm{\iota}{\subsm{Q}{\mathbf{G}}})$ & Standard representative of the limit of $\mathbf{G}$ where $\mathbf{G}$ is an $\topo^{\op}$-gluing data functor and $\subsm{Q}{\mathbf{G}}$ is a glued-up $\topo^{\op}$-object along $\mathbf{G}$ through $\subsm{\iota}{\subsm{Q}{\mathbf{G}}}^{\op}$ (see Definition \ref{deflema}) \\
	
\end{tabular}
 \end{center}

\begin{center} 
  \begin{tabular}{l p{11cm} }
 \textbf{MORPHISMS} & \\[0.5cm]	
$V\subsm{\subseteq}{\operatorname{op}}U$ & $V$ open subset of $U$\\

$\iuv{V}{U}$ &  Inclusion map from open set $V$ to $U$ of a topological space $X$ \\

$\iuv{Z\subsm{\times}{U} W}{Z}$ &  Canonical projection map from $Z \subsm{\times}{U} W$ to $Z$, once a pullback has been chosen in its respective category of two given maps $\phi: W \rightarrow U$ and $\phi: Z \rightarrow U$ in some category $\mathbf{C}$  \\

$\mathcal{F}$ & Functor from $\mbf{C}$ to $\mbf{D}$ where $\mbf{C}$ and $\mbf{D}$ are categories. That is, a pair $(\Co{\mathcal{F}},\Cn{\mathcal{F}})$ where $\Co{\mathcal{F}}$ is the map from $\Co{\mbf{C}}$ to $\Co{\mathbf{D}}$ and $\Cn{\mathcal{F}}$ is the map from $\Cn{\mbf{C}}$ to $\Cn{\mbf{D}}$ such that $\Cn{\mathcal{F}}$$ ( \subsm{\operatorname{id}}{A}) =\subsm{\operatorname{id}}{\Co{\mathcal{F}}(A)}$ and $\Cn{\mathcal{F}} ( f \circ g )=\Cn{\mathcal{F}} (f)\circ \Cn{\mathcal{F}} ( g)$ where $A\in \Co{\mbf{C}}$ and $f,g $ in $ \Cn{\mbf{C}}$ \\

$f^{\op}$ & The morphism corresponding to a morphism $f$ of $\mathbf{C}$ in the opposite category $\mathbf{C}^{\op}$ \\

	\end{tabular}
 \end{center}

\section*{Introduction}

The concept of gluing plays a pivotal role in a multitude of mathematical disciplines, spanning from the realm of topology to the intricate theories of sheaf theory and schemes. Its fundamental nature lies in its capacity to serve as a unifying tool, allowing mathematicians to seamlessly integrate disparate components and gain deeper insights into the intricate tapestry of mathematical structures.

Henri Poincar\'e, a luminary in the world of mathematics, made significant contributions to the development of the gluing concept during the mid-20th century. His pioneering work, as meticulously chronicled in \cite{poincare2010}, laid the essential groundwork for understanding and harnessing the power of gluing in mathematical contexts. Poincar\'es insights and innovations have since paved the way for numerous advancements and applications of the gluing principle across diverse mathematical landscapes  (\cite[\S 2]{curry} and \cite{grothendieck1971}).

One of our primary contributions is the categorical description of the gluing process. Our work achieves the following key objectives:

\begin{enumerate} 
\item We describe glued-up objects as the limit of a functor that we refer to as the "gluing data functor" (see Definition \ref{limext} (1)) (confirming the uniqueness up to isomorphism).

\item We establish a comprehensive characterization of gluing data in a completely abstract manner, introducing the innovative concept of a "gluing index category" (see Definition \ref{gic}). This abstraction allows for the definition of gluing data as functors from the gluing index category that preserve pushouts, providing a flexible and versatile framework for a wide range of applications.

\item This allows us to inquire whether certain given categories admit glued-up objects and gain a deeper understanding of the nature of these categories (see Remark \ref{admia}).

\item We introduce the concept of a refinement of a gluing data functor (see Definition \ref{refino}), enabling the seamless merging (or composing) of gluing functors by considering gluing data functors within the category of gluing data functors  (see Definition \ref{gdff} and Proposition \ref{refinery}).

\item In different categories of topological spaces, we provide rigorous proofs that any gluing data functor admits a limit. Moreover, we demonstrate that the coverings induced by gluing data functors form a Grothendieck topology within this context, reinforcing the foundational importance of gluing processes (see Theorem \ref{sitee}).

\end{enumerate}

Through these contributions, we provide a rigorous and categorical foundation for understanding and utilizing the gluing process in diverse mathematical categories.

In the first section of our paper, we introduce the concept of Gluing Index Categories (see Definition \ref{gic}) and gluing data functors (see Definition \ref{limext} (1)), highlighting some of their essential properties (see Remark \ref{reglue} and \ref{rem1}) and providing intuitive insights into their construction. These definitions lay the groundwork for our investigation into categories that admit glued-up objects, allowing us to better understand their nature. We also develop a simplified framework for characterizing cones (see Lemma \ref{cocone}) and limits over gluing data functors (see Theorem \ref{preglued}), offering practical advantages.

Additionally, we introduce the notion of refining gluing functors (see Definition \ref{refino}), enabling us to combine gluing data functors over the category of gluing data functors whose morphisms are refinement maps (see  Definition \ref{gdff} and Example \ref{torusglu}).

In the second section, we provide a complete and detailed description of the glued-up object associated with a gluing data functor in the context of topological spaces (see Theorem \ref{gluingtop}). We establish that a glued-up object always exists for gluing data functors over topological spaces, extending our result to the category of topological spaces whose morphisms are open maps (see Lemma \ref{otop}). It is worth noting that the gluing data we work with in this paper is a generalization of the gluing data typically considered in the literature. We also offer a detailed and straightforward example that provides an intuitive application of our definition and illustrates how the composition of gluing data functors works in practice (see Example \ref{torusglu}). We present a concise topological interpretation of the glued-up object, closely resembling what we refer to as a "covering" (see Lemma \ref{coverings}). This definition of a gluing covering allows us to prove that the set of such coverings forms a Grothendieck topology over the category of topological spaces (see Theorem \ref{sitee}), including its restricted subcategory where maps consist of open continuous maps. Thus, we establish a relationship between coverings and gluing data functors. 
Our work is complemented by visual figures that provide readers with a concrete understanding of the concepts presented.

\section{Gluing objects categorically} 

In this section, we introduce the Gluing Index Categories and the gluing data functors, novel constructions that can serve as tools for understanding and formalizing gluing operations across various mathematical contexts. Encapsulating gluing data within the gluing data functor framework reveals that achieving a limit over this gluing data functor, which represents the gluing data, directly corresponds to the concept of a glued-up object. 

This newfound clarity in understanding gluing data emerges as we establish subsequent lemmas while developing each definition. These lemmas gradually refine our comprehension, enabling us to encapsulate the core essence of gluing objects in a category in a more abstract and unified manner. 

The insights obtained from these constructions will find practical application in the subsequent section, particularly within the context of topological spaces. We refer to  \cite{Joji}, \cite{adamek2009abstract} and \cite{maclane} for categorical background.

\subsection{Gluing Index Categories and Gluing Data Functors}

In the following definition, we present the Gluing Index Category, denoted as $\glI{I}$. This category encapsulates fundamental components required to establish a flexible gluing framework applicable to diverse categories. This conceptualization is inspired by the notion of topological gluing data. Our aim is to comprehensively describe gluing data in a categorical manner. Our specific objective is to define a functor from the Gluing Index Category with certain categorical properties. This functor will be constructed in such a way that limits consistently exist over these functors in the categories where gluing is usually used. Furthermore, this definition of a functor will be precisely equivalent to providing a gluing data within the conventional categories where gluing is defined.

In order to gain some intuition, one can think 
\begin{itemize} 
\item each individual index corresponds to an open set, 
\item pairs of indices represent opens contained within the single index open that serves as the patching for the gluing process, and 
\item triples of indices denote the intersection of these double index opens. 
\end{itemize} 
The morphisms within $\glI{I}$ align with the respective inclusion maps, and their uniqueness guarantees the satisfaction of essential gluing data relations. Moreover, the equivalence relation applied to the indices within $\glI{I}$ reflects analogous relations encountered in the realm of topological gluing data. Further insight into the more intricate and technical properties of these gluing index categories can be gleaned by carefully examining the retrieval of the gluing data through the framework of diagrams.

\begin{definition}\label{gic}
	Let ${\mathrm{I}}$ be a set. We define the \textbf{\textit{gluing index category of type}} $\mathrm{I}$ denoted $\glI{I}$ as follows:
	
	\begin{enumerate}
		\item \textbf{Objects}: The equivalence classes of elements in $\mathrm{I} \cup \mathrm{I}^2\cup \mathrm{I}^3$ with respect to an equivalence relation $\Rel{\mathrm{I}}$ where $\Rel{\mathrm{I}}$ is the equivalence relation, generated by the relations $i \Rel{\mathrm{I}} (i,i)$, $(i,j, k)\Rel{\mathrm{I}} (i, k, j)$ and $(i,j, j) \Rel{\mathrm{I}} (i, j)$. We denote the equivalence class of the element $i$ as $[i]$, the equivalence class of the element $(i,j,k)$ as $[i,j,k] $ and the equivalence class of the element $(i,j)$ as $[i,j] $, for all $i,j,k \in \mathrm{I}$.
		\item \textbf{Morphisms}: Morphisms in $\glI{I}$ are structured as follows:
		
		\begin{enumerate}
			\item For each $a,b\in \Co{\glI{I}}$, a morphism from $a$ to $b$ is unique when it exists.
			
			\item Each object $a \in \Co{\glI{I}}$ has an associated identity map.
			
			\item Two morphisms in $\glI{I}$, $f: a \rightarrow b$ and $g : c \rightarrow d$, are composable when $b=c$.
			
			\item For each $(i,j) \in \mathrm{I}^2$, we have:
			
			\begin{enumerate}
				\item A unique morphism from $i$ to $[i,j]$, denoted $\etaij{i}{j}$.
				
				\item A unique morphism from $[j,i]$ to $[i,j]$, denoted $\tauij{i}{j}$.
			\end{enumerate}
			
			\item For each $(i,j,k)\in \mathrm{I}^3$ and $n \in \{j,k\}$, we have:
			
			\begin{enumerate}
				\item A unique morphism from $[i,n]$ to $[i,j,k]$, denoted $\etaijk{n}{i}{j}{k}$.
				
				\item A unique morphism from $[j,i,k]$ to $[i,j,k]$, denoted $\tauijk{k}{i}{j}{k}$.
			\end{enumerate}
		\end{enumerate}
	\end{enumerate}
\end{definition}

In light of the preceding definition, we encounter fundamental properties of the Gluing Index Category, $\glI{I}$, which plays a pivotal role in understanding its structure. These properties can be succinctly summarized by observing that for any two objects $a$ and $b$ within $\glI{I}$, the presence of a unique morphism from $a$ to $b$ is a hallmark characteristic. This uniqueness bestows several essential properties upon $\glI{I}$, illuminating its underlying structure. Within this context, we uncover a series of intricate relationships that link various morphisms and objects in $\glI{I}$, revealing the rich interplay between its elements. These relationships are encapsulated in the ensuing Remark \ref{reglue}, which offers a deeper insight into the consequences of uniqueness within $\Cn{\glI{I}}$. 

\begin{remark}\label{reglue}
	The construction of $\glI{I}$ ensures that for any two objects $a,b \in \Co{\glI{I}}$, a morphism from $a$ to $b$ is unique. This uniqueness leads to several essential properties:
	
	\begin{enumerate}
		\item For all $i,j,k \in \mathrm{I}$:
		\begin{enumerate}
		\item $\etaij{i}{i}=\tauij{i}{i}= \subsm{\operatorname{id}}{i}$;
			\item $\tauij{i}{j}\circc \tauji{i}{j} = \idij{i}{j}$ and $\tauji{i}{j}\circc \tauij{i}{j} =  \idij{j}{i}$;
			\item $\tauijk{k}{i}{j}{k} \circc \tauijk{i}{j}{k}{i} = \tauijk{j}{i}{k}{j}$ and $\tauijk{k}{i}{j}{k} \circc \tauijk{k}{j}{i}{k} =\idijk{i}{j}{k}$;
			\item $\etaijk{j}{i}{j}{k} \circc\etaij{i}{j}= \etaijk{k}{i}{j}{k} \circc \etaij{i}{k}$;
			\item $\tauijk{k}{i}{j}{k} \circc \etaijk{i}{j}{i}{k}=  \etaijk{j}{i}{j}{k} \circc \tauij{i}{j}$.
		\end{enumerate}
		
		\item For all $i,j,k \in \mathrm{I}$, the pushout $[i,j] \subsm{\sqcup}{i} [i,k]$ with respect to $\etaij{i}{j}$ and $\etaij{i}{k}$ exists, and we have $[i,j,k] = [i,j] \subsm{\sqcup}{i} [i,k]$. This follows from the equality $\etaijk{k}{i}{j}{k} \circc\etaij{i}{j}= \etaijk{k}{i}{j}{k} \circc \etaij{i}{k}$ and the uniqueness of morphisms.
	\end{enumerate}
\end{remark}

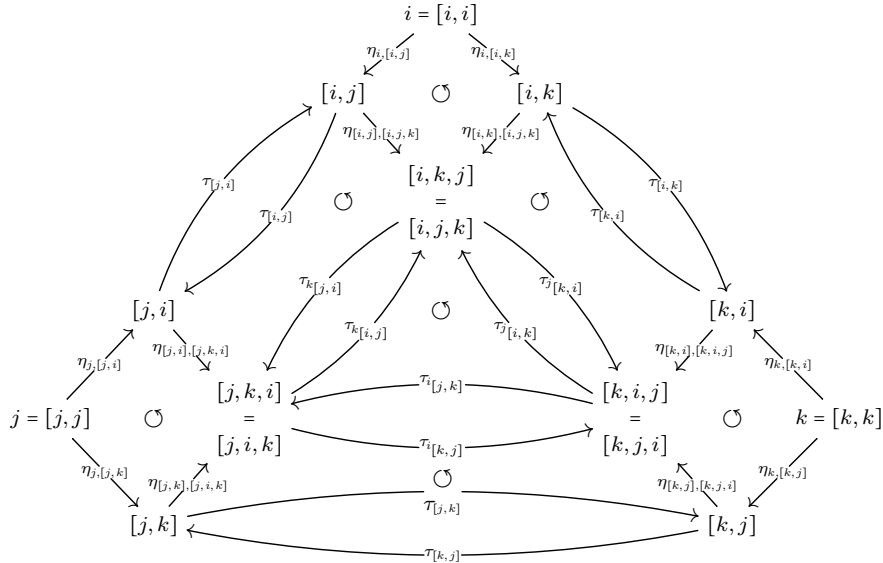
\begin{figure}[H]
$$ {\tiny\xymatrixrowsep{0.2in}
\xymatrixcolsep{0.1in} \xymatrix{ 
&&&& i=[i,i] \ar[rd]|-{\etaij{i}{k}} \ar[ld]|-{\etaij{i}{j}}&&&& \\
&&&[i,j]  \ar@/^1pc/[ddll]|-{\tauji{j}{i}} \ar[rd]|-{\tiny{\etaijk{j}{i}{j}{k}}}  & \circlearrowleft  & [i,k]  \ar@/^1pc/[ddrr]|-{\tauji{k}{i}} \ar[ld]|-{\tiny{\etaijk{k}{i}{j}{k}}}&&& \\
&&&\circlearrowleft& {\begin{array}{@{}c@{}} \left[ i,k,j \right] \\ = \\ \left[ i,j, k \right]  \end{array}} \ar@/^1pc/[rrdd]|-{\tauijk{j}{k}{i}{j}} \ar@/_1pc/[lldd]|-{\tauijk{k}{j}{i}{j}}  & \circlearrowleft & &&\\  
& [j,i]\ar@/^1pc/[uurr]|-{\tauji{i}{j}} \ar[rd]|-{\tiny{\etaijk{i}{j}{k}{i}}}&&& \circlearrowleft  &&&[k,i] \ar@/^1pc/[uull]|-{\tauji{i}{k}} \ar[ld]|-{\tiny{\etaijk{i}{k}{i}{j}}}& \\
  j=[j,j] \ar[ru]|-{\etaij{j}{i}}\ar[rd]|-{\etaij{j}{k}}&\circlearrowleft  &\ar@/_1pc/[rruu]|-{\tauijk{k}{i}{j}{k}} {\begin{array}{@{}c@{}} \left[ j,k,i \right] \\ = \\ 
  \left[ j,i ,k \right]  \end{array}}   \ar@/_1pc/[rrrr]|-{\tauijk{i}{k}{j}{i}}   &&   &&  {\begin{array}{@{}c@{}} \left[ k,i,j \right] \\ = \\ \left[ k,j ,i \right]  \end{array}} \ar@/^1pc/[lluu]|-{\tauijk{j}{i}{k}{j}}   \ar@/_1pc/[llll]|-{\tauijk{i}{j}{k}{i}}
&\circlearrowleft  & k=[k,k]  \ar[ld]|-{\etaij{k}{j}} \ar[lu]|-{\etaij{k}{i}} \\
&[j,k]\ar[ru]|-{\tiny{\etaijk{k}{j}{i}{k}}} \ar@/^1pc/[rrrrrr]|-{{\begin{array}{@{}c@{}} \circlearrowleft \\ \tauji{k}{j}  \end{array}}} &&&&&&\ar@/^1pc/[llllll]|-{\tauji{j}{k}} [k,j]\ar[lu]|-{\tiny{\etaijk{j}{k}{j}{i}}} &\\
}}$$
\caption{Diagram representation of a gluing index category with three elements $i, j, k$. The identity maps on every element are not presented in this diagram for the sake of clarity. The arrows in both directions each compose into the identity map.\\}
\end{figure}

We introduce the concept of a $\mathbf{C}$-gluing data functor, a fundamental notion that plays a pivotal role in understanding gluing operations categorically. The existence of a limit determines whether a $\mathbf{C}$-gluing data functor is sufficient to construct a glued-up object within $\mathbf{C}$. Consequently, we introduce this concept. This definition lays the foundation for understanding how gluing operations are realized within a given category.

\begin{definition}\label{limext}
	Let $\mathrm{I}$ be a set and $\mathbf{C}$ be a category.
	\begin{enumerate} 
	\item We define a $\mathbf{C}$-\textbf{\textit{gluing data functor}} $\mathbf{G}$ of type $\mathrm{I}$ to be a functor from $\mathbf{Gl}(\mathrm{I})$ to $\mathbf{C}$ sending the pushout $[i,j,k]$ to the corresponding pushout, and the image of $\etaijk{n}{i}{j}{k}$ is the canonical morphism defined from the pushout, for all $i,j, k \in \mathrm{I}$ and $n \in \{j,k\}$. For any $i,j, k \in \mathrm{I}$, we denote $\Goi{\mathbf{G}}{i} := \Co{\mathbf{G}}([i])$, $\Goij{\mathbf{G}}{i}{j} := \Co{\mathbf{G}}([i,j])$ and $\Goijk{\mathbf{G}}{i}{j}{k} := \Co{\mathbf{G}}([i,j,k])$.
	
	\item If $\limi{\mathbf{G}}$ exists, then we say that $\mathbf{G}$ is a \textbf{\textit{gluable data functor}}. In this case, we say that $L$ is a \textbf{\textit{glued-up $\mathbf{C}$-object along $\mathbf{G}$ through}} $\subsm{\pi}{L}$ if $(L, \subsm{\pi}{L})$ is a cone over $\mathbf{G}$ that is isomorphic to $\limi{\mathbf{G}}$.
	\end{enumerate}
\end{definition}

\begin{figure}[H]
\includegraphics[scale=0.3]{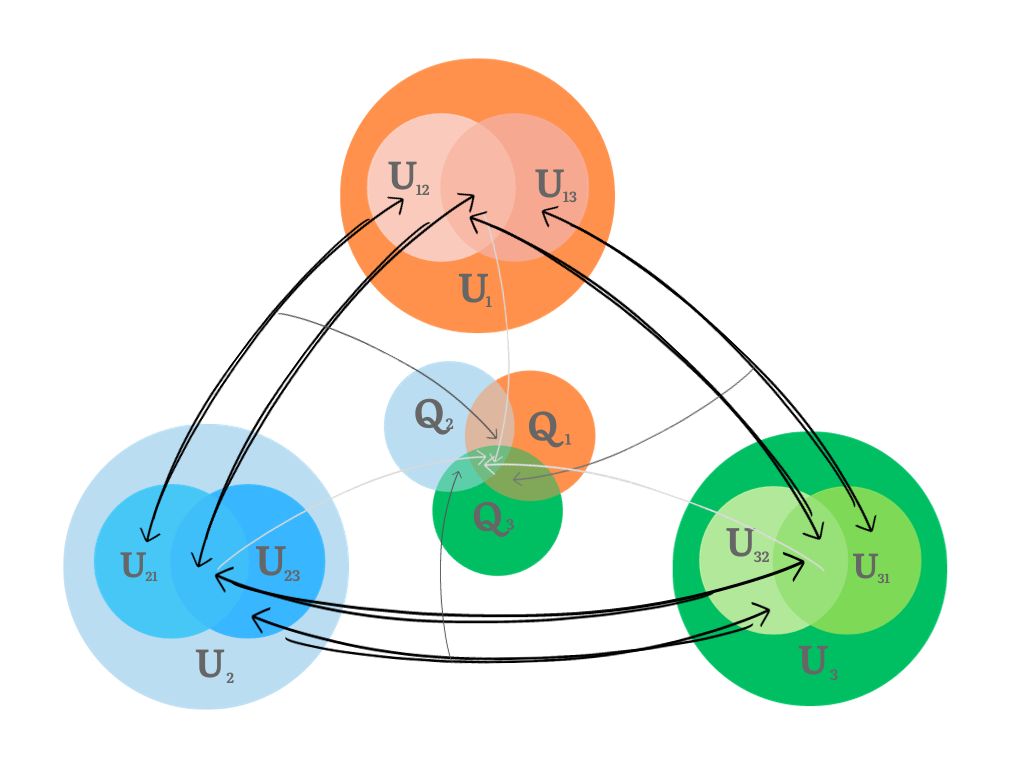}
 \caption{Representation for gluing and the glued-up object in $\mathbf{Top}^\op$ is as follows: The glued-up object $Q$ is situated in the center and is obtained from the three topological spaces, namely $U_1$, $U_2$, and $U_3$, which are mapped via the limit maps to $Q_1$, $Q_2$, and $Q_3$ respectively, forming a covering of $Q$. Moreover, $U_{12}$ is glued with $U_{21}$ and mapped to $Q_1\cap Q_2$, $U_{13}$ is glued with $U_{31}$ and mapped to $Q_1\cap Q_3$, and $U_{32}$ is glued with $U_{23}$ and mapped to $Q_2\cap Q_3$. Furthermore, all the double intersections, namely $U_{12}\cap U_{13}$, $U_{21}\cap U_{23}$, and $U_{31}\cap U_{32}$, are all mapped to the triple intersection $Q_1\cap Q_2 \cap Q_3$.
\\}
\end{figure}

It's important to emphasize the significance of the uniqueness of morphisms within the gluing index category $\glI{I}$. This uniqueness is pivotal, as it enables us to establish crucial equations that underpin the foundations of our framework. As we examine the implications of these definitions and remarks, we gain valuable insights into how gluing data functors serve as a vital bridge between the abstract realm of category theory and the practical application of gluing operations within diverse mathematical structures.

\begin{remark}\label{rem1}
	\begin{enumerate}
		\item Since each morphism between any two objects in $\Co{\glI{I}}$ is unique, any gluing data functor $\mathbf{G}$ is faithful.
		
		\item From Remark \ref{reglue} $(1)$ $(a)$, applying a $\mathbf{C}$-gluing data functor $\mathbf{G}$ to both equations yields $\Gnij{\mbf{G}}{\tauij{i}{j}}\circ \Gnij{\mbf{G}}{\tauij{j}{i}}=\subsm{\operatorname{id}}{\Goij{\mbf{G}}{i}{j}}$ and $\Gnij{\mbf{G}}{\tauij{j}{i}}\circ \Gnij{\mbf{G}}{\tauij{i}{j}}=\subsm{\operatorname{id}}{\Goij{\mbf{G}}{j}{i}}$. Therefore, $\Gnij{\mbf{G}}{\tauij{i}{j}}$ is indeed an isomorphism.
		\item When $\mathrm{I}=\{i\}$ we have $\glI{I}=\{i\}$, a $\mathbf{C}$-gluing data functor is a functor of the form $$\begin{array}{cccl}\mathbf{G}&: \glI{I} & \rightarrow & \mathbf{C}\\& i& \mapsto & \Goi{\mathbf{G}}{i}. \end{array}$$ Therefore a glued-up object is simply an object of $\mathbf{C}$ isomorphic to $\Goi{\mathbf{G}}{i}$.
	\end{enumerate}
\end{remark}
In the following definition, we introduce a category type: categories that admit glued-up objects, allowing us to explore their properties and characteristics.
\begin{definition}
We say that a \textbf{category admits a glued-up object} if every gluing data functor admits a glued-up object.
\end{definition}
We will prove that $\topo^\op$ is a category admitting glued-up objects.

\subsection{Characterizing cones and limits over a gluing data functor}

In the upcoming lemma, we present a synthetic characterization of cones over the gluing data functor $\mathbf{G}$. Our goal is to establish the equivalence among three conditions that not only simplify verification but also aid in identifying limits more conveniently. These conditions play a pivotal role in understanding the properties and behavior of cones within the context of a gluing data functor. 

\begin{lemma}\label{cocone} 
	Let $\mathbf{G}$ be a $\mathbf{C}$-gluing data functor. Let $N\in \Co{\mathbf{C}}$ and $\subsc{N}{\psi} : N \rightarrow \mathbf{G}$ is a family $\subsm{\left(\subsm{\subsc{N}{\psi}}{a}:N \rightarrow  \Goi{\mbf{G}}{a}\right)}{a\!\in\!  \Co{\glI{I}}}$ of morphisms in $\Cn{\mathbf{C}}$. The following statements are equivalent: 
	
	\begin{enumerate}
		\item $(N, \subsc{N}{\psi})$ is a cone over the underlying diagram of $\mathbf{G}$;
		\item $(N, \subsc{N}{\psi})$ makes the following diagrams commute, for all $i,j ,k \in \mathrm{I} $ and $n \in \{ j,k\}$:
		
		\begin{figure}[H]
			\begin{center}
				\begin{tabular}{cccl}
					{\tiny	\begin{tikzcd}[column sep=normal]&  N \arrow[]{dr}{ \randji{\subsc{N}{\psi}}{j}{i}}\arrow[swap]{dl}{\randij{\subsc{N}{\psi}}{i}{j}}  \\ \Goij{\mbf{G}}{i}{j}   && \Goij{\mbf{G}}{j}{i} \arrow[]{ll}{ \Gnij{\mbf{G}}{\tauij{i}{j}}}	\end{tikzcd}}			  	
					&	{\tiny\begin{tikzcd}[column sep=normal]&  N\arrow[swap]{dl}{\randij{\subsc{N}{\psi}}{i}{j}}  \arrow[]{dr}{ \randi{\subsc{N}{\psi}}{i}}\\ \Goij{\mbf{G}}{i}{j}   && \Goi{\mbf{G}}{i}\arrow[]{ll}{ \Gnij{\mbf{G}}{\etaij{i}{j}}}	\end{tikzcd}}	
					&   {\tiny \begin{tikzcd}[column sep=normal]&  N\arrow[]{dr}{ \randij{\subsc{N}{\psi}}{i}{n}}	\arrow[swap]{dl}{\randijk{\subsc{N}{\psi}}{i}{j}{k}}   \\\Goijk{\mbf{G}}{i}{j}{k}  && \Goij{\mbf{G}}{i}{n}\arrow[]{ll}{\Gnij{\mbf{G}}{\etaijk{n}{i}{j}{k}}} \end{tikzcd}}			  	
					\\ \tiny(a) & \tiny(b) &\tiny (c)				  	
				\end{tabular}
				\caption{Diagrams for Condition (2)}
				\label{topol10}
			\end{center}
		\end{figure}
		\item $(N, \randi{\subsc{N}{\psi}}{})$ makes the following diagrams commute, for all $i,j ,k \in \mathrm{I} $ and $n \in \{ j,k\}$:
		
		\begin{figure}[H]\begin{center}
			\begin{tabular}{cccr}
				{\tiny	\begin{tikzcd}[column sep=normal]&  N \arrow[]{dr}{ {\randi{\subsc{N}{\psi}}{j}}}	\arrow[swap]{dl}{\randij{\subsc{N}{\psi}}{i}{j}}  \\ \Goij{\mbf{G}}{i}{j}      && \Goi{\mbf{G}}{j} \arrow[]{ll}{\Gnij{\mbf{G}}{\tauij{i}{j}\circ \etaji{i}{j}}}\end{tikzcd}}			  	
				&	{\tiny\begin{tikzcd}[column sep=normal]&  N\arrow[]{dr}{ {\randi{\subsc{N}{\psi}}{i}}} \arrow[swap]{dl}{\randij{\subsc{N}{\psi}}{i}{j}} \\\Goij{\mbf{G}}{i}{j}     &&  \Goi{\mbf{G}}{i}\arrow[]{ll}{\Gnij{\mbf{G}}{\etaij{i}{j}}}	\end{tikzcd}}	
				& 	{\tiny\begin{tikzcd}[column sep=normal]&  N\arrow[]{dr}{ \randij{\subsc{N}{\psi}}{i}{n}}  \arrow[swap]{dl}{\randijk{\subsc{N}{\psi}}{i}{j}{k}}\\\Goijk{\mbf{G}}{i}{j}{k}    && \Goij{\mbf{G}}{i}{n} 	\arrow[]{ll}{\Gnij{\mbf{G}}{\etaijk{n}{i}{j}{k}}}\end{tikzcd}}
				\\ \tiny (a) &\tiny (b) & \tiny(c)
			\end{tabular}\end{center}
			\caption{Diagrams for Condition (3)}
			\label{topol11}
		\end{figure}
	\end{enumerate}
	
	\begin{proof}
		\begin{enumerate}
			\item[$(1)\Rightarrow (2) $]   This is clear. 
			
			\item[$(2)\Rightarrow (3)$] Assume that $(2)$ is satisfied. We only need to prove that $\Gnij{\mbf{G}}{\tauij{i}{j}\circ \etaji{i}{j}}\circ {\randi{\subsc{N}{\psi}}{j}}={\randij{\subsc{N}{\psi}}{i}{j}}$ for all $i,j\in \mathrm{I}$. This is a consequence of the commutativity of the diagrams in Figure \ref{topol10} $(2) (a),(b)$.  
			
			\item[$(3)\Rightarrow (1)$] Suppose that $(N, \randi{\subsc{N}{\psi}}{})$ makes the diagrams in $(3)$ commute. We want to prove that $(N, \randi{\subsc{N}{\psi}}{})$ is a cone over the underlying diagram of $\mathbf{G}$. 
			
			To show that $(N, \randi{\subsc{N}{\psi}}{})$ is a cone over the underlying diagram of $\mathbf{G}$, we need to prove that for any $a, b \in \Co{\glI{I}}$ and morphism $f: a \rightarrow b$ in $\Cn{\glI{I}}$, we have $\Gnij{\mbf{G}}{f}\circc \randi{\subsc{N}{\psi}}{a} = \randi{\subsc{N}{\psi}}{b}$. Since $(3)$ is satisfied, it suffices to prove that this equality holds for $f$ equal to $\tauij{i}{j}$ and $\tauijk{k}{i}{j}{k}$ for all $i,j,k \in \mathrm{I}$. 
			
			Let $i,j,k\in \mathrm{I}$. Since by assumption we have that $\Gnij{\mbf{G}}{\etaij{i}{j}} \circc \randi{\subsc{N}{\psi}}{i}=\randij{\subsc{N}{\psi}}{i}{j}$, then 
			\begin{align*}
				&\Gnij{\mbf{G}}{\tauij{i}{j} \circc \etaji{i}{j}}\circc  \randi{\subsc{N}{\psi}}{j}\\
				&=\Gnij{\mbf{G}}{\tauij{i}{j}}\circc \Gnij{\mbf{G}}{\etaji{i}{j}}\circc  \randi{\subsc{N}{\psi}}{j}\\
				& = \Gnij{\mbf{G}}{\tauij{i}{j}}\circc \randji{\subsc{N}{\psi}}{j}{i}. 
			\end{align*}
			
			Therefore, since $ \Gnij{\mbf{G}}{\tauij{i}{j}\circ \etaji{i}{j}}\circc  \randi{\subsc{N}{\psi}}{j}=\randij{\subsc{N}{\psi}}{i}{j}$, we obtain $\Gnij{\mbf{G}}{\tauij{i}{j}}\circc \randji{\subsc{N}{\psi}}{j}{i}=\randij{\subsc{N}{\psi}}{i}{j}$ as required.
			
			On the other hand, applying the functor $\mathbf{G}$ to the equality $\tauijk{k}{i}{j}{k} \circc \etaijk{i}{j}{i}{k}=   \etaijk{j}{i}{j}{k}  \circc \tauij{i}{j}$, we obtain $\Gnij{\mbf{G}}{\tauijk{k}{i}{j}{k} \circc \etaijk{i}{j}{i}{k}}=\Gnij{\mbf{G}}{\etaijk{j}{i}{j}{k} \circc \tauij{i}{j}}$. Therefore, we have
			\begin{align*}
				& \Gnij{\mbf{G}}{\etaijk{j}{i}{j}{k} \circc \tauij{i}{j}}\circc\randij{\subsc{N}{\psi}}{i}{j} \\
				&=(\Gnij{\mbf{G}}{\tauijk{k}{i}{j}{k}} \circc \Gnij{\mbf{G}}{\etaijk{i}{j}{i}{k}})\circc \randij{\subsc{N}{\psi}}{i}{j}\\
				&=\Gnij{\mbf{G}}{\tauijk{k}{i}{j}{k}} \circc (\Gnij{\mbf{G}}{\etaijk{i}{j}{i}{k}}\circc \randij{\subsc{N}{\psi}}{i}{j})\\
				&=\Gnij{\mbf{G}}{\tauijk{k}{i}{j}{k}} \circc \randijk{\subsc{N}{\psi}}{j}{i}{k}.
			\end{align*}
			
			Moreover, since by assumption $\Gnij{\mbf{G}}{\etaijk{j}{i}{j}{k} \circ \tauij{i}{j}}\circ \randji{\subsc{N}{\psi}}{j}{i}=\randijk{\subsc{N}{\psi}}{i}{j}{k}$, we obtain $\Gnij{\mbf{G}}{\tauijk{k}{i}{j}{k}} \circ \randijk{\subsc{N}{\psi}}{j}{i}{k}=\randijk{\subsc{N}{\psi}}{i}{j}{k}$ as required. This proves that $(N, \randi{\subsc{N}{\psi}}{})$ is a cone over the underlying diagram of $\mathbf{G}$.
		\end{enumerate}
	\end{proof}
\end{lemma}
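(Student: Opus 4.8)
The plan is to establish the three equivalences in a cycle $(1)\Rightarrow(2)\Rightarrow(3)\Rightarrow(1)$, exploiting the fact that $\glI{I}$ has very few morphisms and that every morphism is built from the generators $\etaij{i}{j}$, $\tauij{i}{j}$, $\etaijk{n}{i}{j}{k}$, $\tauijk{k}{i}{j}{k}$ together with identities. The implication $(1)\Rightarrow(2)$ is immediate: being a cone means compatibility with \emph{all} morphisms of $\glI{I}$, so in particular with the specific generators $\tauij{i}{j}$, $\etaij{i}{j}$, $\etaijk{n}{i}{j}{k}$ appearing in the three diagrams of Figure \ref{topol10}.

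For $(2)\Rightarrow(3)$, I would note that diagrams (b) and (c) of the two figures are literally identical, so the only new content in Figure \ref{topol11} is diagram (a), which asserts $\Gnij{\mbf{G}}{\tauij{i}{j}\circ \etaji{i}{j}}\circ \randi{\subsc{N}{\psi}}{j}=\randij{\subsc{N}{\psi}}{i}{j}$. This follows by composing: $\Gnij{\mbf{G}}{\tauij{i}{j}}\circ\Gnij{\mbf{G}}{\etaji{i}{j}}\circ\randi{\subsc{N}{\psi}}{j} = \Gnij{\mbf{G}}{\tauij{i}{j}}\circ\randji{\subsc{N}{\psi}}{j}{i}$ by Figure \ref{topol10}(b) applied with $i,j$ swapped, and then $=\randij{\subsc{N}{\psi}}{i}{j}$ by Figure \ref{topol10}(a). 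I must be mildly careful about the indexing conventions in the $\eta$-subscripts, but this is a one-line diagram chase.

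The substantive direction is $(3)\Rightarrow(1)$. Here the key observation is that to check $(N,\randi{\subsc{N}{\psi}}{})$ is a cone, it suffices to verify compatibility $\Gnij{\mbf{G}}{f}\circ\randi{\subsc{N}{\psi}}{a}=\randi{\subsc{N}{\psi}}{b}$ only for $f$ ranging over a generating set of morphisms of $\glI{I}$ — and compatibility with $\etaij{i}{j}$, $\etaijk{n}{i}{j}{k}$ is already given by $(3)(b),(c)$, while $(3)(a)$ combined with $(3)(b)$ handles $\tauij{i}{j}$ exactly as in the computation above. The remaining generator is $\tauijk{k}{i}{j}{k}$, and here I would use the identity $\tauijk{k}{i}{j}{k}\circ\etaijk{i}{j}{i}{k}=\etaijk{j}{i}{j}{k}\circ\tauij{i}{j}$ from Remark \ref{reglue}(1)(e): applying $\mathbf{G}$ and precomposing with $\randij{\subsc{N}{\psi}}{i}{j}$, the right side evaluates to $\randijk{\subsc{N}{\psi}}{i}{j}{k}$ using the already-established $\tau$ and $\eta$ compatibilities, while the left side equals $\Gnij{\mbf{G}}{\tauijk{k}{i}{j}{k}}\circ\randijk{\subsc{N}{\psi}}{j}{i}{k}$, yielding the desired equation. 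The main obstacle — really the only place demanding genuine care rather than bookkeeping — is arguing that these generators together with their composites and the identities, modulo the relations of Remark \ref{reglue}(1), actually exhaust $\Cn{\glI{I}}$, so that verifying the cone condition on them suffices; this rests on the uniqueness-of-morphisms property of $\glI{I}$, which forces any morphism between two given objects to coincide with whatever composite of generators lands there, so once a candidate family is compatible with each generator it is automatically compatible with every morphism. I would spell out the list of generator types (there are essentially the two $\eta$-families, the two $\tau$-families, and identities) and observe that every object of $\glI{I}$ is reached from the "singleton" objects by these, closing the argument.
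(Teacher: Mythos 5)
Your proposal is correct and follows essentially the same route as the paper's proof: the reduction of $(2)\Rightarrow(3)$ to diagram (a) via the composite $\Gnij{\mbf{G}}{\tauij{i}{j}}\circ\Gnij{\mbf{G}}{\etaji{i}{j}}$, and the verification of $(3)\Rightarrow(1)$ on the generators $\tauij{i}{j}$ and $\tauijk{k}{i}{j}{k}$ using the relation $\tauijk{k}{i}{j}{k}\circ\etaijk{i}{j}{i}{k}=\etaijk{j}{i}{j}{k}\circ\tauij{i}{j}$ of Remark \ref{reglue}, are exactly the paper's computations. Your additional remark on why checking the cone condition on generators suffices (via uniqueness of morphisms in $\glI{I}$) makes explicit a step the paper leaves implicit.
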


The upcoming theorem provides a characterization of a glued-up object in the form of a pullback diagram. It serves as a fundamental result that elucidates the criteria for designating an object within a category as a "glued-up object"  along a specific gluing data functor.

\begin{theorem}   \label{preglued}
	Let $\mathbf{G}$ be a gluing data functor, $L\in \Co{\mathbf{C}}$ and $\subsm{\pi}{L}$ is a family $\subsm{\{ \dindi{\pi}{L}{a}\}}{a\! \in\! \Co{\glI{I}}}$ of morphisms $\dindi{\pi}{L}{a}: L\rightarrow \Goi{\mbf{G}}{a}$ for all $a \in \Co{\glI{I}}$. $L$ is a  glued-up $\mathbf{C}$-object  along $\mathbf{G}$ through $ \subsm{\pi}{L}$ if and only if for all $i,j, k \in \mathrm{I}$ and  $n \in \{ j,k\}$, the following properties are satisfied:
	\begin{enumerate}
		\item $\dindij{\pi}{L}{i}{j} = \Gnij{\mbf{G}}{\etaij{i}{j}}\circc \dindi{\pi}{L}{i}$ ;  
		\item $\dindijk{\pi}{L}{i}{j}{k}= \Gnij{\mbf{G}}{\etaijk{n}{i}{j}{k}}\circc \dindij{\pi}{L}{i}{n} $;
		\item $(L, \subsm{(\dindi{\pi}{L}{i})}{i\!\in\! \mathrm{I}})$ is the limit over the pullback diagram defined by the morphisms
		$$\Gnij{\mbf{G}}{\etaij{i}{j}}: \Goi{\mbf{G}}{i} \rightarrow \Goij{\mbf{G}}{i}{j} \text{ and } \Gnij{\mbf{G}}{\tauij{i}{j}\circc \etaji{i}{j}}: \Goi{\mbf{G}}{j} \rightarrow \Goij{\mbf{G}}{i}{j}.$$
	\end{enumerate}  
	\begin{proof}
		Let $L$ be a glued-up $\mathbf{C}$-object constructed by gluing along $\mathbf{G}$ through the morphisms $\dindi{\pi}{L}{}$. According to Definition \ref{limext}, we know that $\limi{\mathbf{G}}$ exists and is isomorphic to $(L, \subsm{\pi}{L})$. Our objective is to demonstrate the satisfaction of properties $(1)$, $(2)$, and $(3)$ as outlined in the theorem.
		
		The diagrams in Lemma \ref{cocone} $(3)$ $(b)$ and $(c)$ are equivalent to properties $(1)$ and $(2)$, respectively. To establish property $(3)$ of the theorem, we start by proving that the following diagram for arbitrary $i$ and $j$ in $\mathrm{I}$:
		\begin{figure}[H] 
			\begin{center}
				{\tiny	\begin{tikzcd}[column sep=normal]
						&\Goij{\mbf{G}}{i}{j}  &&& \Goi{\mbf{G}}{j}\arrow[swap]{lll}{\Gnij{\mbf{G}}{\tauij{i}{j}\circ \etaji{i}{j}}}\\
						&\Goi{\mbf{G}}{i} \arrow[]{u}{\Gnij{\mbf{G}}{\etaij{i}{j}}}&& & L\arrow[]{lll}{ \dindi{\pi}{L}{i}}\arrow[swap]{u} { \dindi{\pi}{L}{j}}
			\end{tikzcd}}\end{center} 	 \caption{}\label{topol13}	
		\end{figure}    
		\noindent commutes. This holds by combining the commutativity of the diagram in Figure (3) (a) and (3) (b) of Lemma \ref{cocone}. 
		
		Let us now consider the scenario where we have a pair $(L',\subsm{\pi}{L'})$, with $L'\in \Co{\mathbf{C}}$ and $\subsm{\pi}{L'}$ representing a family of maps $\dindpi{\pi}{L}{i} : L'\rightarrow \Goi{\mbf{G}}{a}$ where $a\in  \Co{\glI{I}}$, such that the following diagram commutes for all $i,j\in \mathrm{I}$:
		
		\begin{figure}[H] 
			\begin{center}
				{\tiny	
					\begin{tikzcd}[column sep=normal]
						&\Goij{\mbf{G}}{i}{j} & &&  \Goi{\mbf{G}}{j}\arrow[swap]{lll}{{\Gnij{\mbf{G}}{\tauij{i}{j}\circ \etaji{i}{j}}}}\\
						& \Goi{\mbf{G}}{i} \arrow[]{u}{\Gnij{\mbf{G}}{\etaij{i}{j}}}&& &  L'\arrow[]{lll}{\dindpi{\pi}{L}{i}}\arrow[swap]{u} {\dindpi{\pi}{L}{j}}
					\end{tikzcd}
				}
			\end{center} 
			\caption{}
			\label{topol116}
		\end{figure}
		
		Now, our aim is to establish that the pair $(L',\widetilde{\subsm{\pi}{L'}})$ forms a cone over $\mathbf{G}$, where $\widetilde{\subsm{\pi}{L'}}$ is a family of maps $\widetilde{\dindpi{\pi}{L}{a}}: L'\rightarrow \Goi{\mbf{G}}{a}$, and each $\widetilde{\dindpi{\pi}{L}{i}}$ coincides with $\dindpi{\pi}{L}{i}$, $\widetilde{\dindpij{\pi}{L}{i}{j}}$ is given by ${\Gnij{\mbf{G}}{\etaij{i}{j}}}\circc \dindpi{\pi}{L}{i}$, and $ \widetilde{\dindpijk{\pi}{L}{i}{j}{k}}$ is computed as $\Gnij{\mbf{G}}{\etaijk{n}{i}{j}{k}}\circc {\dindpij{\pi}{L}{i}{n}}$, for all $i,j, k \in \mathrm{I}$ and  $n \in \{ j,k\}$.
		
		In order to prove that $(L',\widetilde{\subsm{\pi}{L'}})$ is a cone over $\mathbf{G}$, according to the definition of $\widetilde{\subsm{\pi}{L'}}$ and Lemma \ref{cocone}, it is enough to prove that the following diagram commutes, for all $i,j\in \mathrm{I}$: 
		\begin{figure}[H]
			\begin{center}
				{\tiny	\begin{tikzcd}[column sep=normal]&  L' \arrow[]{dr}{\widetilde{\dindpi{\pi}{L}{j}}}	\arrow[swap]{dl}{\widetilde{\dindpij{\pi}{L}{i}{j}}}  \\ \Goij{\mbf{G}}{i}{j}    && \Goi{\mbf{G}}{j} \arrow[]{ll}{\Gnij{\mbf{G}}{\tauij{i}{j}\circ \etaji{i}{j}}}\end{tikzcd}}
			\end{center}
			\caption{}\label{topol126}
		\end{figure}
		
		\noindent 
		The commutativity of this diagram follows directly from the definition of $\widetilde{\subsm{\pi}{L'}}$ and the commutativity of the diagram in Figure \ref{topol116}. Therefore, the pair $(L',\widetilde{\subsm{\pi}{L'}})$ is a cone over $\mathbf{G}$. By assumption, $\limi{\mathbf{G}}\simeq (L,\subsm{\pi}{L})$, so there exists a unique morphism, say $\mu: L'\rightarrow L$, making each of the following diagrams commute, for all $i,j, k \in \mathrm{I}$ and  $n \in \{ j,k\}$:
		\begin{figure}[H]\begin{center}	\begin{tabular}{cccl}
				{\tiny\begin{tikzcd}[column sep=normal] &L'\arrow[bend left, labels=description]{dddr}{\widetilde{\dindpi{\pi}{L}{j}}} \arrow[dashed]{dd}{\exists ! \mu}\arrow[bend right, labels=description]{dddl}{\widetilde{\dindpij{\pi}{L}{i}{j}}} \\\\ & L\arrow[labels=description]{dr}{{\subsm{\pi}{\subsm{L}{j}}}} \arrow[labels=description]{dl}{\dindij{\pi}{L}{i}{j}}    \\ \Goij{\mbf{G}}{i}{j} & & \Goi{\mbf{G}}{j}\arrow[]{ll}{{\Gnij{\mbf{G}}{\tauij{i}{j}\circ \etaji{i}{j}}}}   
				\end{tikzcd}} &	{\tiny\begin{tikzcd}[row sep=normal] & L'\arrow[bend left, labels=description]{dddr}{\widetilde{\dindpi{\pi}{L}{i}}} \arrow[bend right, labels=description]{dddl}{\widetilde{\dindpij{\pi}{L}{i}{j}}} \arrow[dashed]{dd}{\exists ! \mu}  \\\\ & L \arrow[labels=description]{dr}{\dindi{\pi}{L}{i}}\arrow[labels=description]{dl}{\dindij{\pi}{L}{i}{j}}    \\ \Goij{\mbf{G}}{i}{j}  & & \Goi{\mbf{G}}{i} \arrow[]{ll}{\Gnij{\mbf{G}}{\etaij{i}{j}}}
				\end{tikzcd}} &  {\tiny\begin{tikzcd}[row sep=normal] & L'\arrow[bend left, labels=description]{dddr}{\widetilde{\dindpij{\pi}{L}{i}{n}}} \arrow[bend right, labels=description]{dddl}{\widetilde{\dindpijk{\pi}{L}{i}{j}{k}}} \arrow[dashed]{dd}{\exists ! \mu}     \\\\ &L \arrow[labels=description]{dr}{\subsm{\pi}{\subsm{L}{[i,n]}}} \arrow[labels=description]{dl}{\dindijk{\pi}{L}{i}{j}{k}} \\ \Goijk{\mbf{G}}{i}{j}{k} & & \Goij{\mbf{G}}{i}{n} \arrow[]{ll}{\Gnij{\mbf{G}}{\etaijk{n}{i}{j}{k}}}
				\end{tikzcd}}\\ \tiny (a) &\tiny (b) &\tiny (c) \end{tabular} \end{center}\caption{}\label{topol118}           \end{figure} 
		
		We now pick such a $\mu$ and by construction of $\mu$ the following diagram

		\begin{figure}[H]
			\begin{center}
				{\tiny\begin{tikzcd}[column sep=normal]
						\Goij{\mbf{G}}{i}{j}  && & 
						\Goi{\mbf{G}}{j} \arrow[swap]{lll}{{\Gnij{\mbf{G}}{\tauij{i}{j}\circ \etaji{i}{j}}}}  \\
						\Goi{\mbf{G}}{i}\arrow[]{u}{\Gnij{\mbf{G}}{\etaij{i}{j}}}     && & 
						L\arrow[swap]{u}{\subsm{\pi}{\subsm{L}{j}}}\arrow[]{lll}{\dindi{\pi}{L}{i}} \\
						&&&& L' \arrow[swap,dashed]{ul}{ \mu}\arrow[bend left=10]{ullll}{{ {\dindpi{\pi}{L}{i}}}} \arrow[swap,bend right=15]{uul}{ \dindpi{\pi}{L}{j}}
				\end{tikzcd} }
			\end{center}\caption{}\label{topol117}
		\end{figure}
		\noindent also commutes, for all $i,j\in \mathrm{I}$. This concludes the proof. 
		
		Conversely, suppose that properties $(1)$, $(2)$ and $(3)$ of the theorem are satisfied. We want to prove that $L$ is a glued-up $\mathbf{C}$-object along $\mathbf{G}$ through $\subsm{\pi}{L}$. From the properties $(1)$, $(2)$ and $(3)$ of the theorem, we have that $(L , \subsm{\pi}{L})$ is a cone over $\mathbf{G}$.  
		Suppose that $(L' , \subsm{\pi}{L'})$ is another cone over $\mathbf{G}$. That is, the following diagrams
		
		\begin{figure}[H] \begin{center}\begin{tabular}{cccr}
				{\tiny	\begin{tikzcd}[column sep=normal]&  L' \arrow[]{dr}{\dindpi{\pi}{L}{j}}	\arrow[swap]{dl}{\dindpij{\pi}{L}{i}{j}}  \\ \Goij{\mbf{G}}{i}{j}    && \Goi{\mbf{G}}{j} \arrow[]{ll}{{\Gnij{\mbf{G}}{\tauij{i}{j}\circ \etaji{i}{j}}}}\end{tikzcd}}			  	
				&	{\tiny\begin{tikzcd}[column sep=normal]&  L'\arrow[]{dr}{{\dindpi{\pi}{L}{i}}} \arrow[swap]{dl}{\dindpij{\pi}{L}{i}{j}} \\\Goij{\mbf{G}}{i}{j}    &&\Goi{\mbf{G}}{i}\arrow[]{ll}{\Gnij{\mbf{G}}{\etaij{i}{j}}}	\end{tikzcd}}				  				         & 	{\tiny\begin{tikzcd}[column sep=normal]&  L'\arrow[]{dr} {\dindpij{\pi}{L}{i}{n}}  \arrow[swap]{dl}{{\dindpijk{\pi}{L}{i}{j}{k}}}\\\Goijk{\mbf{G}}{i}{j}{k}    && \Goij{\mbf{G}}{i}{n} 	\arrow[]{ll}{\Gnij{\mbf{G}}{\etaijk{n}{i}{j}{k}}}\end{tikzcd}} \\\tiny (a) &\tiny (b) & \tiny(c)		\end{tabular}	\end{center}	  	
			\caption{}\label{topol120}				  	\end{figure}
		\noindent   commute, for all $i,j, k \in \mathrm{I}$ and  $n \in \{ j,k\}$. Now, we want to show that the pair $(L' , \subsm{\pi}{L'})$ makes the following diagram
		\begin{figure}[H] 
			\begin{center}
				{\tiny	\begin{tikzcd}[column sep=normal]
						&\Goij{\mbf{G}}{i}{j}  & && \Goi{\mbf{G}}{j}\arrow[swap]{lll}{{\Gnij{\mbf{G}}{\tauij{i}{j}\circ \etaji{i}{j}}}}\\
						& \Goi{\mbf{G}}{i} \arrow[]{u}{\Gnij{\mbf{G}}{\etaij{i}{j}}}&& & L'\arrow[]{lll}{{ \dindpi{\pi}{L}{i}}}\arrow[swap]{u} {{ \dindpi{\pi}{L}{j}}}
			\end{tikzcd}}\end{center} 	 \caption{}\label{topol127}	
		\end{figure}
		\noindent commute, for all $i,j \in \mathrm{I}$. By Figure \ref{topol120} $(a)$ and $(b)$, we have 
		$$\Gnij{\mbf{G}}{\tauij{i}{j}\circ \etaji{i}{j}}\circ \dindpi{\pi}{L}{j}={\Gnij{\mbf{G}}{\tauij{i}{j}}\circ\big(\Gnij{\mbf{G}}{\etaji{i}{j}}}\circ \dindpi{\pi}{L}{j}\big)=\Gnij{\mbf{G}}{\tauij{i}{j}}\circ\dindpij{\pi}{L}{j}{i} =\dindpij{\pi}{L}{i}{j}=\Gnij{\mbf{G}}{\etaij{i}{j}}\circ {\dindpi{\pi}{L}{i}}.$$ Hence the diagram in Figure \ref{topol127} commutes.   Then, by Property $(3)$ of the theorem, there exists a unique morphism $\mu: L'\rightarrow L$ making the following diagram 
		\begin{figure}[H]
			\begin{center}
				{\tiny\begin{tikzcd}[column sep=normal]
						\Goij{\mbf{G}}{i}{j}  && & 
						\Goi{\mbf{G}}{j} \arrow[swap]{lll}{{\Gnij{\mbf{G}}{\tauij{i}{j}\circ \etaji{i}{j}}}}  \\
						\Goi{\mbf{G}}{i}\arrow[]{u}{\Gnij{\mbf{G}}{\etaij{i}{j}}}     & && 
						L\arrow[swap]{u}{{\subsm{\pi}{\subsm{L}{j}}}}\arrow[]{lll}{\dindi{\pi}{L}{i}} \\
						&&&& L' \arrow[swap,dashed, near end]{ul}{ \exists ! \mu}\arrow[bend left=15]{ullll}{{\dindpi{\pi}{L}{\!i}}} \arrow[swap,bend right=15]{uul}{\dindpi{\pi}{L}{\!j}}
				\end{tikzcd} }
			\end{center}\caption{}\label{topol128}
		\end{figure}
		\noindent   commute, for all $i,j\in \mathrm{I}$. 
		We choose such a $\mu$. Thanks to the commutativity of the diagram as shown in Figure \ref{topol128}, combined with Properties $(1)$ and $(2)$ of the theorem, which are assumed to be satisfied, we obtain that each of the following diagrams 
		\begin{figure}[H]\begin{center}	\begin{tabular}{cccl}
				{\tiny\begin{tikzcd}[column sep=normal] &L'\arrow[bend left, labels=description]{dddr}{\dindpi{\pi}{L}{j}} \arrow[dashed]{dd}{\exists ! \mu}\arrow[bend right, labels=description]{dddl}{\dindpij{\pi}{L}{i}{j}} \\\\ & L\arrow[labels=description]{dr}{\dindi{\pi}{L}{j}} \arrow[labels=description]{dl}{\dindij{\pi}{L}{i}{j}}    \\ \Goij{\mbf{G}}{i}{j} & & \Goi{\mbf{G}}{j}\arrow[]{ll}{{\Gnij{\mbf{G}}{\tauij{i}{j}\circ \etaji{i}{j}}}}   
				\end{tikzcd}} &	{\tiny\begin{tikzcd}[row sep=normal] & L'\arrow[bend left, labels=description]{dddr}{{\dindpi{\pi}{L}{i}}} \arrow[bend right, labels=description]{dddl}{\dindpij{\pi}{L}{i}{j}} \arrow[dashed]{dd}{\exists ! \mu}  \\\\ & L \arrow[labels=description]{dr}{\dindi{\pi}{L}{i}}\arrow[labels=description]{dl}{\dindij{\pi}{L}{i}{j}}    \\ \Goij{\mbf{G}}{i}{j}  & & \Goi{\mbf{G}}{i} \arrow[]{ll}{\Gnij{\mbf{G}}{\etaij{i}{j}}}
				\end{tikzcd}} &  {\tiny\begin{tikzcd}[row sep=normal] & L'\arrow[bend left, labels=description]{dddr}{\dindpij{\pi}{L}{i}{n}} \arrow[bend right, labels=description]{dddl}{{\dindpijk{\pi}{L}{i}{j}{k}}} \arrow[dashed]{dd}{\exists ! \mu}     \\\\ &L \arrow[labels=description]{dr}{{\subsm{\pi}{\subsm{L}{[i,n]}}}} \arrow[labels=description]{dl}{{\dindijk{\pi}{L}{i}{j}{k}}} \\ \Goijk{\mbf{G}}{i}{j}{k}  & & \Goij{\mbf{G}}{i}{n} \arrow[]{ll}{\Gnij{\mbf{G}}{\etaijk{n}{i}{j}{k}}}
				\end{tikzcd}}\\ \tiny (a) &\tiny (b) &\tiny (c) \end{tabular} \end{center}\caption{}\label{topol121}           \end{figure} 
		\noindent commutes, for all $i,j, k \in \mathrm{I}$ and  $n \in \{ j,k\}$.  
		With this, we have shown that $(L,\subsm{\pi}{L})$ is a cone over $\mathbf{G}$, and any other cone $(L',\subsm{\pi}{L'})$ can be uniquely mapped to $(L,\subsm{\pi}{L})$ via the unique morphism $\mu: L' \to L$ satisfying the commutativity of the diagram in Figure \ref{topol116}. 
		Therefore, $\limi{\mathbf{G}}$ exists and is isomorphic to $(L,\subsm{\pi}{L})$, which implies that $L$ is a glued-up $\mathbf{C}$-object along $\mathbf{G}$ through $\subsm{\pi}{L}$.
	\end{proof}
\end{theorem}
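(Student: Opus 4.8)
The plan is to prove both directions by systematically translating statements about cones over $\mathbf{G}$ into statements about cones over the pullback diagram of property $(3)$, which I will call $\mathcal{P}$: the diagram whose two legs over each pair $(i,j)\in\mathrm{I}^2$ are $\Gnij{\mbf{G}}{\etaij{i}{j}}\colon \Goi{\mbf{G}}{i}\to\Goij{\mbf{G}}{i}{j}$ and $\Gnij{\mbf{G}}{\tauij{i}{j}\circ\etaji{i}{j}}\colon \Goi{\mbf{G}}{j}\to\Goij{\mbf{G}}{i}{j}$. The key technical input is Lemma \ref{cocone}, which lets one pass freely between full cone data indexed by $\Co{\glI{I}}$ and its restriction to $\mathrm{I}$; combined with the formulas in properties $(1)$ and $(2)$, this shows that the $[i,j]$- and $[i,j,k]$-components of any $\mathbf{G}$-cone are completely forced by its $\mathrm{I}$-components, so that the two universal properties in play will be visibly interchangeable.

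For the forward implication I would assume $(L,\subsm{\pi}{L})$ is a cone over $\mathbf{G}$ with $(L,\subsm{\pi}{L})\simeq\limi{\mathbf{G}}$. By Lemma \ref{cocone} the diagrams of its condition $(3)$ commute; diagrams $(3)(b)$ and $(3)(c)$ are exactly properties $(1)$ and $(2)$. For property $(3)$ I would first combine the commuting triangles $(3)(a)$ and $(3)(b)$ of Lemma \ref{cocone} to conclude that $(L,(\dindi{\pi}{L}{i})_{i\in\mathrm{I}})$ is a cone over $\mathcal{P}$. For its universality, given any cone $(L',(\dindpi{\pi}{L}{i})_{i\in\mathrm{I}})$ over $\mathcal{P}$, I would extend it to a family $\widetilde{\subsm{\pi}{L'}}$ indexed by $\Co{\glI{I}}$ by setting $\widetilde{\dindpij{\pi}{L}{i}{j}} := \Gnij{\mbf{G}}{\etaij{i}{j}}\circ\dindpi{\pi}{L}{i}$ and $\widetilde{\dindpijk{\pi}{L}{i}{j}{k}} := \Gnij{\mbf{G}}{\etaijk{n}{i}{j}{k}}\circ\dindpij{\pi}{L}{i}{n}$; Lemma \ref{cocone} then reduces the claim that this is a $\mathbf{G}$-cone to the single triangle relating $\widetilde{\dindpij{\pi}{L}{i}{j}}$, $\widetilde{\dindpi{\pi}{L}{j}}$ and $\Gnij{\mbf{G}}{\tauij{i}{j}\circ\etaji{i}{j}}$, which is precisely the defining square of the given $\mathcal{P}$-cone. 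Since $(L,\subsm{\pi}{L})\simeq\limi{\mathbf{G}}$ there is a unique cone morphism $\mu\colon L'\to L$; its $\mathrm{I}$-components exhibit the factorization through $\mathcal{P}$, and uniqueness transfers because properties $(1)$ and $(2)$ force any morphism compatible with the $\mathrm{I}$-components to be compatible with all the remaining components as well.

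For the converse I would assume $(1),(2),(3)$. Lemma \ref{cocone} again gives that $(L,\subsm{\pi}{L})$ is a $\mathbf{G}$-cone: its $(3)(b),(3)(c)$ are read off from $(1),(2)$, while $(3)(a)$ follows because $(L,(\dindi{\pi}{L}{i})_i)$ is a $\mathcal{P}$-cone, so $\Gnij{\mbf{G}}{\tauij{i}{j}\circ\etaji{i}{j}}\circ\dindi{\pi}{L}{j}=\Gnij{\mbf{G}}{\etaij{i}{j}}\circ\dindi{\pi}{L}{i}=\dindij{\pi}{L}{i}{j}$, the last step by property $(1)$. I would then take any cone $(L',\subsm{\pi}{L'})$ over $\mathbf{G}$ and check that its $\mathrm{I}$-components form a $\mathcal{P}$-cone, using functoriality of $\mathbf{G}$ and the $\mathbf{G}$-cone equations to rewrite $\Gnij{\mbf{G}}{\tauij{i}{j}\circ\etaji{i}{j}}\circ\dindpi{\pi}{L}{j}$ as $\dindpij{\pi}{L}{i}{j}$ and hence as $\Gnij{\mbf{G}}{\etaij{i}{j}}\circ\dindpi{\pi}{L}{i}$. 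Property $(3)$ then yields a unique $\mu\colon L'\to L$ over $\mathcal{P}$; properties $(1),(2)$ promote $\mu$ to a morphism of $\mathbf{G}$-cones, and uniqueness again passes through the $\mathrm{I}$-components. Thus $(L,\subsm{\pi}{L})$ satisfies the universal property of $\limi{\mathbf{G}}$, i.e. $L$ is a glued-up $\mathbf{C}$-object along $\mathbf{G}$ through $\subsm{\pi}{L}$.

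I expect the main difficulty to be bookkeeping rather than conceptual depth: one must keep straight the asymmetry of $\mathcal{P}$ — the leg out of $\Goi{\mbf{G}}{i}$ is $\Gnij{\mbf{G}}{\etaij{i}{j}}$ while the leg out of $\Goi{\mbf{G}}{j}$ is $\Gnij{\mbf{G}}{\tauij{i}{j}\circ\etaji{i}{j}}$ — and verify that the passage ``$\mathbf{G}$-cone $\leftrightarrow$ $\mathcal{P}$-cone'' is a bijection respecting cone morphisms in both directions, so that the limit property genuinely carries across. The only place a real computation enters is checking that the extended family $\widetilde{\subsm{\pi}{L'}}$ meets the hypotheses of Lemma \ref{cocone}, and that uses nothing beyond the defining formulas and the single $\mathcal{P}$-cone identity.
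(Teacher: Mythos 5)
Your proposal is correct and follows essentially the same route as the paper's proof: both directions hinge on Lemma \ref{cocone} to pass between full $\mathbf{G}$-cones and their $\mathrm{I}$-indexed restrictions, on extending a cone over the pullback diagram to a $\mathbf{G}$-cone via the formulas in properties $(1)$ and $(2)$, and on transferring the universal property through that correspondence. Your explicit remark that uniqueness of the mediating morphism carries over because $(1)$ and $(2)$ force compatibility on the $[i,j]$- and $[i,j,k]$-components once it holds on the $\mathrm{I}$-components is a point the paper treats somewhat tersely, but it is the same argument.
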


In the following remark, we translate the properties of a glued-up object within the context of a contravariant functor.
\begin{remark}\label{oppc}
\hspace{2em}
\begin{enumerate}
	\item If $\mathbf{G}$ is a gluing data functor of  type $\mathrm{I}$ from $\glI{I}$ to $\mathbf{C}^{\op}$. 
	Then, $Q$ is a glued-up $\mathbf{C}^{\op}$-object along $\mathbf{G}$ through $\subsm{\iota}{Q}^{\op}$ if and only if the following properties hold in $\mathbf{C}$, for all $i,j, k \in \mathrm{I}$ and  $n \in \{ j,k\}$:
	\begin{itemize}
		\item[(a)] $\dindij{\iota}{Q}{i}{j}=\dindi{\iota}{Q}{i} \circc \Gnij{\mbf{G}}{\etaij{i}{j}}^{\op}$;  
		\item[(b)] $\dindijk{\iota}{Q}{i}{j}{k} =\dindij{\iota}{Q}{i}{n}  \circc \Gnij{\mbf{G}}{\etaijk{n}{i}{j}{k}}^{\op} $;
		
		\item[(c)] $(Q, \subsm{(\dindi{\iota}{Q}{i})}{i \! \in \! \mathrm{I}})$ is the limit over the pushout diagram defined by the morphisms \\ $\Gnij{\mbf{G}}{\etaij{i}{j}}^{\op}: \Goij{\mbf{G}}{i}{j} \rightarrow \Goi{\mbf{G}}{i}$ and ${\Gnij{\mbf{G}}{\tauij{i}{j} \! \circ \! \etaji{i}{j}}}^{\op}:  \Goij{\mbf{G}}{i}{j} \rightarrow \Goi{\mbf{G}}{j}$. 
		\end{itemize}
		\item If $\mathrm{I}=\{1,2\}$, then $\Co{\glI{I}}=\{1,2,[1,2],[2,1]\}$, and if $\mathbf{G}$ is a $\mathbf{C}$-gluing data functor from $\glI{I}$, then a glued-up object, if it exists, is the pullback of $\Gnij{\mbf{G}}{\etaij{1}{2}}$ and $\Cn{\mathbf{G}}(\tauij{1}{2}\circ \etaji{1}{2})$. So, pullbacks are a particular case of limits of a certain gluing data functor, and therefore, gluing data functors generalize the concept of pullbacks.

	\end{enumerate}
\end{remark}

\begin{figure}[H]
$${\tiny \xymatrixrowsep{0.2in}
\xymatrixcolsep{0.1in} \xymatrix{ 
&&&& Q    &&&& \\
&&&\circlearrowleft & \subsm{U}{i}=\subsm{U}{i,i} \ar[u]_{\dindsi{\iota}{Q}{i}} &\circlearrowleft &&& \\
&&&\subsm{U}{i,j} \ar[ru]|-{\upsilon_{i,j}}  \ar@/^1pc/[ddll]|-{\subsm{\varphi}{i,j}}  & \circlearrowleft  & \subsm{U}{i,k} \ar[lu]|-{\upsilon_{i,k}}  \ar@/^1pc/[ddrr]|-{\subsm{\varphi}{i,k}} &&& \\
&&&\circlearrowleft& {\begin{array}{@{}c@{}} \subsm{U}{i,k}\subsm{\times}{\subsm{U}{i}} \subsm{U}{i,j}  \\ = \\ \subsm{U}{i,j}\subsm{\times}{\subsm{U}{i}} \subsm{U}{i,k}  \ar[lu]|-{\tiny{\iuv{\subsm{U}{i,j}\subsm{\times}{\subsm{U}{i}} \subsm{U}{i,k}}{\subsm{U}{i,k}}}} \ar[ru]|-{\tiny{\iuv{\subsm{U}{i,k}\subsm{\times}{\subsm{U}{i}} \subsm{U}{i,j}}{\subsm{U}{i,k}}}}\end{array}} \ar@/^1pc/[rrdd]|-{\subsm{\varphi_{j}}{(i,k)}} \ar@/_1pc/[lldd]|-{\subsm{\varphi_{k}}{(i,j)}}  & \circlearrowleft & &&\\  
&\subsm{U}{j,i} \ar[ld]|-{\upsilon_{j,i}}  \ar@/^1pc/[uurr]|-{\subsm{\varphi}{j,i}} &&& \circlearrowleft  &&&\subsm{U}{k,i} \ar[rd]|-{\upsilon_{k,i}}  \ar@/^1pc/[uull]|-{\subsm{\varphi}{k,i}} & \\
  \subsm{U}{j}=\subsm{U}{j,j} \ar@/_-4pc/[uuuuurrrr]^{\dindsi{\iota}{Q}{j}}  &\circlearrowleft  &\ar@/_1pc/[rruu]|-{\subsm{\varphi_{k}}{(j,i)}} {\begin{array}{@{}c@{}} \subsm{U}{j,k}\subsm{\times}{\subsm{U}{j}} \subsm{U}{j,i} \\ = \\ \subsm{U}{j,i}\subsm{\times}{\subsm{U}{j}} \subsm{U}{j,k} \ar[ld]|-{\tiny{\iuv{\subsm{U}{j,i}\subsm{\times}{\subsm{U}{j}} \subsm{U}{j,k}}{\subsm{U}{j,k}}}} \ar[lu]|-{\tiny{\iuv{\subsm{U}{j,k}\subsm{\times}{\subsm{U}{j}} \subsm{U}{j,i}}{\subsm{U}{j,i}}}}
    \end{array}}   \ar@/_1pc/[rrrr]|-{\subsm{\varphi_{i}}{(j,k)} }   &&   &&  {\begin{array}{@{}c@{}} \subsm{U}{k,i}\subsm{\times}{\subsm{U}{k}} \subsm{U}{k,j} \\ = \\ \subsm{U}{k,j}\subsm{\times}{\subsm{U}{k}} \subsm{U}{k,i} \ar[rd]|-{\tiny{\iuv{\subsm{U}{k,j}\subsm{\times}{\subsm{U}{k}} \subsm{U}{k,i}}{\subsm{U}{k,j}}}} \ar[ru]|-{\tiny{\iuv{\subsm{U}{k,i}\subsm{\times}{\subsm{U}{k}} \subsm{U}{k,j}}{\subsm{U}{k,i}}}} \end{array}} \ar@/^1pc/[lluu]|-{\subsm{\varphi_{j}}{(k,i)}}   \ar@/_1pc/[llll]|-{\subsm{\varphi_{i}}{(k,j)}}
&\circlearrowleft  & \subsm{U}{k}=\subsm{U}{k,k} \ar@/^-4pc/[uuuuullll]_{\dindsi{\iota}{Q}{k}} \\
&\subsm{U}{j,k} \ar[lu]|-{\upsilon_{j,k}}  \ar@/^1pc/[rrrrrr]|-{{\begin{array}{@{}c@{}} \circlearrowleft \\ \subsm{\varphi}{j,k}  \end{array}}} &&&&&&\ar@/^1pc/[llllll]|-{\subsm{\varphi}{k,j}} \subsm{U}{k,j} \ar[ru]|-{\upsilon_{k,j}}   &\\
}}$$
\caption{Diagram representation of a gluing index category and the glued object $(Q, \iota)$, where, for all $i,j,k\in \mathrm{I}$ and $n\in \{j,k\}$: $\Goi{\mathbf{G}}{i} := \subsm{U}{i}$; $\Goij{\mathbf{G}}{i}{j} := \subsm{U}{i,j}$; $\Goijk{\mathbf{G}}{i}{j}{k} := \subsm{U}{i,j}\subsm{\times}{\subsm{U}{i}} \subsm{U}{i,k}$; $\Gnij{\mbf{G}}{\tauij{i}{j}}^{\op}:=\subsm{\varphi}{i,j}$; $\Gnij{\mbf{G}}{\tauijk{k}{i}{j}{k}}^{\op}:=\subsm{\varphi_{k}}{(i,j)}$; $\Gnij{\mbf{G}}{\etaij{i}{j}}^{\op}:=\upsilon_{i,j}$; $\Gnij{\mbf{G}}{\etaijk{n}{i}{j}{k}}^{\op}:=\iuv{\subsm{U}{i,j}\subsm{\times}{\subsm{U}{i}} \subsm{U}{i,k}}{\subsm{U}{i,n}}$.\\ }
\end{figure}

\subsection{Refinement}
Inspired by the notion of refinement in coverings (see \cite[\S 1]{Conrad}), we introduce a refinement concept for gluing data functors of different types, utilizing a mapping between their types. This refinement notion allows us to define a refinement map, drawing inspiration from topological spaces.

Moreover, this refinement concept naturally leads to the definition of a functor from the category of sets to the category of choice, employing the concept of gluing data functors.

\begin{definition}\label{refino}
\begin{enumerate}
\item Let $\mathrm{I}, \mathrm{J}$ be two index sets, and $\gamma : \mathrm{I}\rightarrow \mathrm{J}$ be a map. We define $\mathbf{Gl}(\gamma):\mathbf{Gl}(\mathrm{I})\rightarrow \mathbf{Gl}(\mathrm{J})$ to be the functor such that 
\begin{itemize}
\item $\Co{\mathbf{Gl}(\gamma)}(i) = \gamma(i)$;
\item $\Co{\mathbf{Gl}(\gamma)}([i,j]) = [\gamma(i), \gamma(j)]$;
\item $\Co{\mathbf{Gl}(\gamma)}([i,j,k]) = [\gamma(i), \gamma(j), \gamma(k)]$;
\item $\Cn{\mathbf{Gl}(\gamma)}(\etaij{i}{j}) = \etaij{\gamma(i)}{\gamma(j)}$;
\item $\Cn{\mathbf{Gl}(\gamma)}(\etaijk{n}{i}{j}{k}) = \etaijk{\gamma(n)}{\gamma(i)}{\gamma(j)}{\gamma(k)}$;
\item $\Cn{\mathbf{Gl}(\gamma)}(\tauij{i}{j}) = \tauij{\gamma(i)}{\gamma(j)}$;
\item $\Cn{\mathbf{Gl}(\gamma)}(\tauijk{k}{i}{j}{k}) =\tauijk{\gamma(k)}{\gamma(i)}{\gamma(j)}{\gamma(k)}$;
\end{itemize}
\item We define the $\mathbf{Gl}$-functor, denoted $\mathbf{Gl}$, to be the functor from $\mathbf{Sets}$ to $\mathbf{C}$ such that $\Co{\mathbf{Gl}}$ sends a set $\mathrm{I}$ to $\mathbf{Gl}(\mathrm{I})$ and $\Cn{\mathbf{Gl}}$ sends a map $\gamma$ to $\mathbf{Gl}(\gamma)$.
\end{enumerate}
\end{definition}

\begin{definition}
Let $\mathbf{C}$ be a category, and let $\mathrm{I}$ and $\mathrm{J}$ be sets. We say that a $\mathbf{C}$-gluing functor $\mathbf{G}$ of type $\mathrm{J}$ \textbf{\textit{refines}} a $\mathbf{C}$-gluing functor $\mathbf{F}$ of type $\mathrm{I}$ if there exists a map $\gamma: \mathrm{I} \rightarrow \mathrm{J}$ and a natural transformation $\subsm{\rho}{\gamma, \mathbf{G}, \mathbf{F}}:  \mathbf{G}\circ \mathbf{Gl}(\gamma)  \rightarrow  \mathbf{F}$. In this case, we also say that $\mathbf{G}$ is a refinement of $\mathbf{F}$, and $\subsm{\rho}{\gamma, \mathbf{G}, \mathbf{F}}$ is a refinement morphism from $\mathbf{G}$ to $\mathbf{F}$.
\end{definition}

In the next lemma, we observe that the refinement of gluing functions induces a unique canonical morphism between their respective glued-up objects.
\begin{lemma}  \label{refine}
Let $\mathbf{C}$ be a category, $\mathbf{G}$ be a $\mathbf{C}$-gluing functor of type $\mathrm{J}$, and $\mathbf{F}$ be a $\mathbf{C}$-gluing functor of type $\mathrm{I}$. Suppose that $\limi{\mathbf{G}}$ and $\limi{\mathbf{F}}$ exist. If $\mathbf{G}$ is a refinement of $\mathbf{F}$, then there exists a unique cone morphism $\mu : \limi{\mathbf{G}} \rightarrow \limi{\mathbf{F}}$.
\end{lemma}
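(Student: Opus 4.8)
The statement asks for a unique cone morphism between the limits of $\mathbf{G}$ and $\mathbf{F}$; the natural interpretation is that $\mathbf{F}$'s limit receives a unique morphism from $\mathbf{G}$'s limit, compatible with the refinement data. The plan is to build, from $\limi{\mathbf{G}} = (L_\mathbf{G}, \subsm{\pi}{L_\mathbf{G}})$, a cone over $\mathbf{F}$, and then invoke the universal property of $\limi{\mathbf{F}}$ to get the unique $\mu$.

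First I would unwind the data. Write $\gamma : \mathrm{I} \to \mathrm{J}$ for the index map and $\rho := \subsm{\rho}{\gamma, \mathbf{G}, \mathbf{F}} : \mathbf{G} \circ \mathbf{Gl}(\gamma) \to \mathbf{F}$ for the refinement natural transformation; its components are morphisms $\subsm{\rho}{a} : \Co{\mathbf{G}}(\mathbf{Gl}(\gamma)(a)) \to \Co{\mathbf{F}}(a)$ for $a \in \Co{\glI{I}}$ (the index set for $\mathbf{F}$), natural in $a$. Since $(L_\mathbf{G}, \subsm{\pi}{L_\mathbf{G}})$ is a cone over $\mathbf{G}$, for each $a \in \Co{\glI{I}}$ we have a morphism $\dindi{\pi}{L_\mathbf{G}}{\mathbf{Gl}(\gamma)(a)} : L_\mathbf{G} \to \Co{\mathbf{G}}(\mathbf{Gl}(\gamma)(a))$; composing with $\subsm{\rho}{a}$ defines a candidate family $\psi_a := \subsm{\rho}{a} \circc \dindi{\pi}{L_\mathbf{G}}{\mathbf{Gl}(\gamma)(a)} : L_\mathbf{G} \to \Goi{\mbf{F}}{a}$.

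The second step is to check that $(L_\mathbf{G}, (\psi_a)_a)$ is a cone over $\mathbf{F}$. Rather than verifying every morphism of $\glI{I}$, I would invoke Lemma \ref{cocone}: it suffices to check the three families of commuting triangles in condition (2) (or (3)) for $\mathbf{F}$. Each such triangle for $\mathbf{F}$ pulls back to the corresponding triangle for $\mathbf{G}$ at the $\gamma$-image indices — which commutes because $(L_\mathbf{G}, \subsm{\pi}{L_\mathbf{G}})$ is a cone over $\mathbf{G}$ and $\mathbf{Gl}(\gamma)$ sends each structural morphism $\etaij{i}{j}, \tauij{i}{j}, \etaijk{n}{i}{j}{k}$ to the corresponding structural morphism at $\gamma$-indices (Definition \ref{refino}(1)) — glued to naturality squares for $\rho$. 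Concretely, for triangle (b) one computes
\begin{align*}
\Gnij{\mbf{F}}{\etaij{i}{j}} \circc \psi_i
&= \Gnij{\mbf{F}}{\etaij{i}{j}} \circc \subsm{\rho}{[i]} \circc \dindi{\pi}{L_\mathbf{G}}{[\gamma(i)]}\\
&= \subsm{\rho}{[i,j]} \circc \Gnij{\mbf{G}}{\etaij{\gamma(i)}{\gamma(j)}} \circc \dindi{\pi}{L_\mathbf{G}}{[\gamma(i)]}\\
&= \subsm{\rho}{[i,j]} \circc \dindi{\pi}{L_\mathbf{G}}{[\gamma(i),\gamma(j)]} = \psi_{[i,j]},
\end{align*}
where the middle equality is naturality of $\rho$ and the last uses that $\subsm{\pi}{L_\mathbf{G}}$ is a cone; triangles (a) and (c) are analogous. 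Having established the cone, the universal property of $\limi{\mathbf{F}}$ yields a unique morphism $\mu : L_\mathbf{G} \to L_\mathbf{F}$ of cones, i.e. $\dindi{\pi}{L_\mathbf{F}}{a} \circc \mu = \psi_a$ for all $a$; since $\limi{\mathbf{G}}$ and $\limi{\mathbf{F}}$ are well-defined up to unique isomorphism, $\mu$ is independent of the choices made, which closes the argument.

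The main obstacle is bookkeeping rather than depth: one must be careful that $\mathbf{Gl}(\gamma)$ interacts correctly with the pushout objects $[i,j,k]$ (it sends $[i,j,k]$ to $[\gamma(i),\gamma(j),\gamma(k)]$ and the canonical maps to canonical maps, so $\mathbf{G} \circ \mathbf{Gl}(\gamma)$ is again a gluing data functor, a prerequisite for $\rho$ to even typecheck), and that naturality of $\rho$ is used at exactly the structural morphisms appearing in Lemma \ref{cocone}. A minor subtlety is whether "cone morphism $\mu : \limi{\mathbf{G}} \to \limi{\mathbf{F}}$" should be read as a morphism of the underlying objects compatible with the refinement; the construction above produces exactly that, so no extra work is needed beyond noting uniqueness follows from the universal property.
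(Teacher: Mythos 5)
Your proposal is correct and follows essentially the same route as the paper: compose the limit cone of $\mathbf{G}$ at the $\gamma$-shifted indices with the components of the refinement natural transformation $\subsm{\rho}{\gamma, \mathbf{G}, \mathbf{F}}$ to obtain a cone over $\mathbf{F}$, then invoke terminality of $\limi{\mathbf{F}}$. The paper compresses this into a single commutative diagram (restricted to the $i$, $j$, $[i,j]$ data, implicitly using the pullback characterization of the limit), whereas you spell out the cone verification via Lemma \ref{cocone} and naturality of $\rho$; the content is the same.
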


\begin{proof}
We write 
\begin{itemize}
\item $ \limi{\mathbf{G}}$ as $(\subsm{L}{\mathbf{G}}, \subsm{\pi}{\mathbf{G}})$;
\item $ \limi{\mathbf{F}}$ as $(\subsm{L}{\mathbf{F}}, \subsm{\pi}{\mathbf{F}})$.
\end{itemize}
Since $\limi{\mathbf{F}}$ is a terminal cone over $\mathbf{F}$, we obtain the existence of the unique map $\mu: \subsm{L}{\mathbf{G}}\rightarrow \subsm{L}{\mathbf{F}}$ by considering the following commutative diagram for all $i,j\in \mathrm{I}$:

\begin{figure}[H]
	\begin{center}
		{\tiny
			\begin{tikzcd}[column sep=normal]
				\Co{\mathbf{F}}(i,j)                                                                                                                                                                          &  & & \Co{\mathbf{F}}(j) \arrow[swap]{lll} {\Cn{\mathbf{F}}(\etaaji{i}{j})} &  &                                                                                                 \\
				&  &                                                                                                                                                                       &  &  &                                                                                                                                                       &  &                                                                                                 \\
				&  & \Co{\mathbf{F}}(i) \arrow[swap]{lluu} {\Cn{\mathbf{F}}(\etaij{i}{j})} &            &  & \subsm{L}{\mathbf{F}}  \arrow[swap]{lluu}{\dindsi{\pi}{\mathbf{F}}{j}}  \arrow[near end]{lll}{\dindsi{\pi}{\mathbf{F}}{i}}\\
				\Co{\mathbf{G}}(\gamma(i), \gamma(j))  \arrow{uuu}{ \subsm{\subsm{\rho}{\gamma, \mathbf{G}, \mathbf{F}}}{[i,j]}}                                                                                                                                                                                                                                                  &  &  & \Co{\mathbf{G}}(\gamma(j)) \arrow[swap]{lll} {\Cn{\mathbf{G}}(\etaaji{\gamma(i)}{\gamma(j)})} \arrow[near end]  {uuu}{\subsm{\subsm{\rho}{\gamma, \mathbf{G}, \mathbf{F}}}{j}}   &  &  \\	&  &                                                                                                                                                                       &  &  &   &  &   \\&  & \Co{\mathbf{G}}(\gamma(i)) \arrow[]{lluu}{\Cn{\mathbf{G}}(\etaij{\gamma(i)}{\gamma(j)})}      \arrow[swap,near start]{uuu} {\subsm{\subsm{\rho}{\gamma, \mathbf{G}, \mathbf{F}}}{i}}  &                                                                                                                                                       &  & \subsm{L}{\mathbf{G}}\arrow[]{lll} {\dindsi{\pi}{\mathbf{G}}{i}}  \arrow[swap]{lluu}{\dindsi{\pi}{\mathbf{G}}{j}}   \arrow[swap,dotted]{uuu}{\exists ! \mu}                                                                           
			\end{tikzcd}
		}
		
	\end{center}  \caption{}\label{topoul128}
\end{figure}

\end{proof}

We can define the category of gluing data functors, where we define morphisms to be refinement morphisms.

\begin{definition}\label{gdff}
Let $\mathbf{C}$ be a category. We define the category of $\mathbf{C}$-\textbf{\textit{gluing data functors}}, denoted as $\mathbf{Gdf}(\mathbf{C})$, to be the category whose objects are $\mathbf{C}$-gluing data functors, and morphisms are refinement morphisms.
\end{definition}

With all the concepts defined in the previous definitions, we can now explore the composition of gluing data functors within the category of gluing data functors. This process allows us to combine gluing data functors, akin to composing them.
\begin{proposition}\label{refinery}
Let $\mathbf{C}$ be a category and $\mathrm{I}$ be a set. Let $\mathcal{G}$ be a $\mathbf{Gdf}(\mathbf{C})$-gluing data functor of type $\mathrm{I}$. Suppose that 
\begin{itemize}
\item[(a)] $\limi{\subsm{\mathcal{G}}{a}}$ exists for all $a\in \Co{\glI{I}}$.
\item[(b)] $\limi{\subsm{\mathcal{G}}{[i,j,k]}}:=\limi{\subsm{\mathcal{G}}{[i,j]}}  \subsm{\plus}{\limi{\subsm{\mathcal{G}}{i}}} \limi{\subsm{\mathcal{G}}{[i,k]}}$, for all $i,j , k\in \mathrm{I}$.
\end{itemize}
We define the functor $\tilde{\mathcal{G}}:\glI{I}\rightarrow \mathbf{Gdf}(\mathbf{C})$ such that $\tilde{\mathcal{G}}(a):=\limi{\subsm{\mathcal{G}}{a}}$ for all $a\in \Co{\glI{I}}$, and $\tilde{\mathcal{G}}(u)$ is the unique morphism from $\limi{\subsm{\mathcal{G}}{\text{dom}(u)}}$ to $\limi{\subsm{\mathcal{G}}{\text{codom}(u)}}$ for  all $u\in \Cn{\glI{I}}$ (see Lemma \ref{refine}). Then 
\begin{enumerate}
\item $\tilde{\mathcal{G}}\in \mathbf{Gdf}(\mathbf{C})$.
\item $\limi {{\mathcal{G}}}$ exists. More precisely,  $\limi {{\mathcal{G}}}=\tilde{\mathcal{G}}$;
\item When $\limi\;(\limi {{\mathcal{G}}})$ exists, then $\limi\;(\limi {{\mathcal{G}}})= \limi \tilde{{\mathcal{G}}}$.
\end{enumerate}
\end{proposition}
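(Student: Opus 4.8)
The plan is to prove the three assertions in turn, noting at once that (3) is a formal consequence of (2): once (2) provides the identification $\limi{\mathcal{G}}=\tilde{\mathcal{G}}$ in $\mathbf{Gdf}(\mathbf{C})$, applying $\limi$ to both sides gives $\limi{(\limi{\mathcal{G}})}=\limi{\tilde{\mathcal{G}}}$ as soon as either side is defined, which is exactly (3). The organizing observation behind (1) and (2) is the following: write $\mathbf{L}$ for the functor from the full subcategory of $\mathbf{Gdf}(\mathbf{C})$ on the gluable $\mathbf{C}$-gluing data functors to $\mathbf{C}$ that sends a gluable $\mathbf{F}$ to its glued-up object $\limi{\mathbf{F}}$ and a refinement morphism $\mathbf{F}\to\mathbf{F}'$ to the unique cone morphism $\limi{\mathbf{F}}\to\limi{\mathbf{F}'}$ furnished by Lemma \ref{refine}; assumption (a) is exactly what makes $\mathcal{G}$ factor through this subcategory, and by construction $\tilde{\mathcal{G}}=\mathbf{L}\circ\mathcal{G}$. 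That $\mathbf{L}$ is genuinely a functor --- preservation of identities and composition --- reduces, through the uniqueness clause of Lemma \ref{refine}, to the fact that a composite of induced cone morphisms is again a cone morphism and so coincides with the one induced by the composite refinement morphism.

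For (1), it remains, given that $\tilde{\mathcal{G}}=\mathbf{L}\circ\mathcal{G}$ is a functor $\glI{I}\to\mathbf{C}$, to verify the defining condition of a $\mathbf{C}$-gluing data functor from Definition \ref{limext}(1): that $\tilde{\mathcal{G}}$ carries each $[i,j,k]$ to the pushout $\tilde{\mathcal{G}}([i,j])\sqcup_{\tilde{\mathcal{G}}(i)}\tilde{\mathcal{G}}([i,k])$ and sends each $\etaijk{n}{i}{j}{k}$ to the canonical morphism into that pushout. Since $\mathcal{G}$ is itself a gluing data functor we already have $\mathcal{G}([i,j,k])=\mathcal{G}([i,j])\sqcup_{\mathcal{G}(i)}\mathcal{G}([i,k])$ in $\mathbf{Gdf}(\mathbf{C})$ with the $\mathcal{G}(\etaijk{n}{i}{j}{k})$ canonical; applying $\mathbf{L}$ and invoking assumption (b) --- which is precisely the assertion that $\mathbf{L}$ turns this pushout into the pushout of the three glued-up objects --- yields the object-level statement, and the naturality property of the Lemma \ref{refine} cone morphisms then identifies $\tilde{\mathcal{G}}(\etaijk{n}{i}{j}{k})$ with the canonical insertion. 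This establishes (1).

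For (2), the plan is to apply Theorem \ref{preglued} inside $\mathbf{Gdf}(\mathbf{C})$ to the gluing data functor $\mathcal{G}$: I would equip $\tilde{\mathcal{G}}$ with a family of refinement morphisms $\pi_a:\tilde{\mathcal{G}}\to\mathcal{G}_a$ for $a\in\Co{\glI{I}}$, obtained by repackaging the terminal cone of $\limi{\mathcal{G}_a}=\tilde{\mathcal{G}}(a)$ as a refinement morphism out of $\tilde{\mathcal{G}}$, and then check the three conditions of that theorem: $\pi_{[i,j]}=\mathcal{G}(\etaij{i}{j})\circ\pi_i$, $\pi_{[i,j,k]}=\mathcal{G}(\etaijk{n}{i}{j}{k})\circ\pi_{[i,n]}$, and the statement that $(\tilde{\mathcal{G}},\{\pi_i\}_{i\in\mathrm{I}})$ is the limit in $\mathbf{Gdf}(\mathbf{C})$ of the generalized pullback diagram built from the morphisms $\mathcal{G}(\etaij{i}{j})$ and $\mathcal{G}(\tauij{i}{j}\circ\etaji{i}{j})$. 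The first two conditions are forced by the defining property of the cone morphisms of Lemma \ref{refine} applied to the structure maps of $\mathcal{G}$, together with (1). The substance --- and the step I expect to be the main obstacle --- is the third condition. There one must unwind a cone over that diagram in $\mathbf{Gdf}(\mathbf{C})$ with an arbitrary apex $\mathcal{H}$ into, for each $i\in\mathrm{I}$, a refinement morphism $\mathcal{H}\to\mathcal{G}_i$ satisfying the pullback compatibilities of Lemma \ref{cocone}, restrict these along $\mathbf{Gl}$ of the appropriate maps of index sets so as to present, for every $a$, a cone over the diagram $\mathcal{G}_a$ in $\mathbf{C}$, use terminality of $\limi{\mathcal{G}_a}=\tilde{\mathcal{G}}(a)$ to obtain a unique factorization through $\tilde{\mathcal{G}}(a)$, and finally check --- using once more that refinement morphisms are honest natural transformations and that $\mathcal{G}$ preserves the triple pushouts --- that these factorizations are mutually compatible and hence assemble into the unique refinement morphism $\mathcal{H}\to\tilde{\mathcal{G}}$ required by the universal property. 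The delicate feature here is that $\mathcal{H}$ and the various $\mathcal{G}_a$ carry different index sets, related only through the maps $\gamma$ built into the refinement morphisms, so this is an interchange-of-limits argument whose difficulty lies entirely in the simultaneous bookkeeping of the outer diagram over $\glI{I}$, the inner diagrams $\mathcal{G}_a$, and the index-set maps linking them; the remaining manipulations are formal. With (2) secured, (3) follows by the substitution noted at the start.
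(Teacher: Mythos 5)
Your proposal is correct and follows essentially the same route as the paper: the paper's entire proof consists of the two observations that $\tilde{\mathcal{G}}$ sends pushouts to pushouts by assumption (b) (your part (1)) and that the limit of $\mathcal{G}$ is computed pointwise (your part (2)), with (3) being immediate by substitution. Your write-up is far more detailed than the paper's two-sentence argument --- in particular, the terminality/index-set bookkeeping you flag as the main obstacle is precisely the content the paper compresses into the phrase ``the limit of $\mathcal{G}$ point-wise,'' so your plan neither diverges from nor falls short of what the paper actually does.
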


\begin{proof}
By definition, $\tilde{\mathcal{G}}$ is a functor from $\glI{I}$ to $\mathbf{Gdf}(\mathbf{C})$ that sends pushouts to pushouts, as shown in part (b) above. Therefore, it is a gluing data functor. Since $\tilde{\mathcal{G}}$ is the limit of $\mathcal{G}$ point-wise, we have $\tilde{\mathcal{G}}=\limi \mathcal{G}$. This concludes the proof.
\end{proof}

\section{Gluing topological spaces categorically}\label{joycat}
We refer to \cite{munkres} for  the topological background. In this section, we explore the process of gluing topological spaces using the formal language developed in the previous section. We also introduce alternative perspectives on glued-up objects that shed light on how certain required properties can be viewed as equivalent to the usual glued-up object.

In the context of our discussion on gluing in topological spaces, we now present a formal definition that generalize the conventional concept of gluing data for a topological space. The following definition introduces the collection of the components that are necessary to allow the gluing of topological spaces.  
\begin{definition}\label{tpsgd}
	A collection $$\left(\mathrm{I},\subsm{\{{\subsm{U}{i}}\}}{i\!\in\! \mathrm{I}},\subsm{\{\subsm{U}{i,j}, \subsm{\upsilon}{i,j} \}}{(i,j)\! \in\! \mathrm{I}^2}, \subsm{\{\subsm{\varphi}{i,j}, \subsm{\subsm{\varphi}{k}}{(i,j)}\}}{(i,j,k)\!\in\! \mathrm{I}^3}\right)$$ where 
	\begin{enumerate}
		\item $\mathrm{I}$ is a set;
		\item $\subsm{\{\subsm{U}{i}\}}{i\!\in\! \mathrm{I}}$ is a family of topological spaces;
		\item $\subsm{\{\subsm{U}{i,j}, \subsm{\upsilon}{i,j} \}}{(i,j)\! \in\! \mathrm{I}^2}$ is a family where $\subsm{U}{i,j}$ are topological spaces and $\subsm{\upsilon}{i,j} : \subsm{U}{i,j} \rightarrow \subsm{U}{i}$ are continuous maps, for all $i, j \in \mathrm{I}$; 
		\item $\subsm{\{\subsm{\varphi}{i,j}, \subsm{\subsm{\varphi}{k}}{(i,j)}\}}{(i,j,k)\!\in\! \mathrm{I}^3}$ is a family of continuous maps where $\subsm{\varphi}{i,j} : \subsm{U}{i,j} \rightarrow \subsm{U}{j,i}$ and $\subsm{\subsm{\varphi}{k}}{(i,j)}:\subsm{U}{i, j }\times_{U_i} \subsm{U}{i, k}\rightarrow \subsm{U}{j, i }\times_{U_j} \subsm{U}{j, i } $, for all $(i,j,k) \in \mathrm{I}^3$
	
	\end{enumerate}
	such that, for each $i,j,k\in \mathrm{I}$, 
	\begin{enumerate}
		\item[a)] $\subsm{U}{i,i}=\subsm{U}{i}$;
		\item[b)] $\subsm{\varphi}{i,i}=\subsm{\operatorname{id}}{\subsm{U}{i,i}};$ 
		
		\item [c)] $\subsm{\subsm{\varphi}{j}}{(i,k)}=\subsm{\subsm{\varphi}{i}}{(j,k)}\circc \subsm{\subsm{\varphi}{k}}{(i,j)}$.
		\item[d)] $\subsm{\mathfrak{i}}{\subsm{U}{j, i }\times_{U_j} \subsm{U}{j, i }, \subsm{U}{j,i}}\circ \subsm{\varphi_{k}}{(i,j)}=\subsm{\varphi}{i,j}\circ \subsm{\mathfrak{i}}{\subsm{U}{i, j }\times_{U_i} \subsm{U}{i, k }, \subsm{U}{i,j}}$.
	\end{enumerate}
	is called a \textbf{\textit{topological space gluing data}}.
\end{definition}

\begin{remark}\label{inve}
	Given $\left(\mathrm{I},\subsm{\{{\subsm{U}{i}}\}}{i\!\in\! \mathrm{I}},\subsm{\{\subsm{U}{i,j}, \upsilon_{i,j} \}}{(i,j)\! \in\! \mathrm{I}^2}, \subsm{\{\subsm{\varphi}{i,j}, \subsm{\varphi_{k}}{(i,j)}\}}{(i,j,k)\!\in\! \mathrm{I}^3}\right)$ a topological space gluing data, $\subsm{\varphi}{i,j}$ is a homeomorphism from $\subsm{U}{i, j}$ to $\subsm{U}{j,i }$ whose inverse is $\subsm{\varphi}{j,i}$. This is a consequence of $c)$ applied to $k=i$ and $b)$.
\end{remark}

In the upcoming lemma, we explore the connection between $\topo^{\op}$-gluing data functors and topological space gluing data. We prove the equivalence between these two constructs, shedding light on how the abstract notion of gluing in category theory aligns with the concrete gluing of topological spaces. This insight bridges the gap between theory and practice, providing a powerful tool for both abstract mathematical exploration and practical applications in topology.
\begin{lemma}\label{equi}
	An $\topo^{\op}$-gluing data functor $\mathbf{G}$ induces the topological space gluing data $$\left(\mathrm{I}, \subsm{\{\Goi{\mbf{G}}{i}\}}{i\!\in\! \mathrm{I}},\subsm{\{\Goij{\mbf{G}}{i}{j}, \Gnij{\mbf{G}}{\etaij{i}{j}}^{\op}\}}{(i,j)\!\in\! \mathrm{I}^2},\subsm{\{\Gnij{\mbf{G}}{\tauij{i}{j}}^{\op},\Gnij{\mbf{G}}{\tauijk{k}{i}{j}{k}}^{\op}\}}{( i,j,k)\! \in\! \mathrm{I}^3}\right).$$ Conversely, a topological space gluing data $$\left(\mathrm{I},\subsm{\{{\subsm{U}{i}}\}}{i\!\in\! \mathrm{I}},\subsm{\{\subsm{U}{i,j}, \upsilon_{i,j} \}}{(i,j)\! \in\! \mathrm{I}^2}, \subsm{\{\subsm{\varphi}{i,j}, \subsm{\varphi_{k}}{(i,j)}\}}{(i,j,k)\!\in\! \mathrm{I}^3}\right)$$ induces the $\topo^{\op}$-gluing data functor $\mathbf{G}$ defined by $\Goi{\mbf{G}}{i}:= \subsm{U}{i}$, $\Goij{\mbf{G}}{i}{j}:=\subsm{U}{i,j}$,  $\Gnij{\mbf{G}}{\etaij{i}{j}}^{\op}:=\upsilon_{i,j}, \Gnij{\mbf{G}}{\tauij{i}{j}}^{\op}:=\subsm{\varphi}{i,j}$ and $\Gnij{\mbf{G}}{\tauijk{k}{i}{j}{k}}^{\op}:=\subsm{\varphi_{k}}{(i,j)}$ for all $i, j,k\in \mathrm{I}$.
	\begin{proof}
		Consider an $\topo^{\op}$-gluing data functor $\mathbf{G}$. We aim to prove that the collection $\left(\mathrm{I}, \subsm{\{\Goi{\mbf{G}}{i}\}}{i\!\in\! \mathrm{I}},\subsm{\{\Goij{\mbf{G}}{i}{j}, \Gnij{\mbf{G}}{\etaij{i}{j}}^{\op}\}}{(i,j)\!\in\! \mathrm{I}^2},\subsm{\{\Gnij{\mbf{G}}{\tauij{i}{j}}^{\op},\Gnij{\mbf{G}}{\tauijk{k}{i}{j}{k}}^{\op}\}}{( i,j,k)\! \in\! \mathrm{I}^3}\right)$ constitutes a topological space gluing data. According to the definition of a functor from $\glI{I}$ to $\topo^{\op}$, we know that $\subsm{\{\Goi{\mbf{G}}{i}\}}{i\!\in\! \mathrm{I}}$ represents a family of topological spaces, $\subsm{\{\Goij{\mbf{G}}{i}{j}\}}{(i,j)\!\in\! \mathrm{I}^2}$ corresponds to a family of topological spaces such that $\Gnij{\mbf{G}}{\etaij{i}{j}}^{\op}$ is a continuous map, and $\subsm{\{\Gnij{\mbf{G}}{\tauij{i}{j}}^{\op},\Gnij{\mbf{G}}{\tauijk{k}{i}{j}{k}}^{\op}\}}{( i,j,k)\! \in\! \mathrm{I}^3}$ denotes a family of continuous maps where  $\Gnij{\mbf{G}}{\tauij{i}{j}}^{\op}$ is a map from $\Goij{\mbf{G}}{i}{j}$ to $\Goij{\mbf{G}}{j}{i}$ and $\Gnij{\mbf{G}}{\tauijk{k}{i}{j}{k}}^{\op}$ is map from $\Goijk{\mbf{G}}{i}{j}{k}$ to $\Goijk{\mbf{G}}{j}{i}{k}$ for all $i,j,k\in \mathrm{ I}$. Now, we proceed to verify that conditions $a)$, $b)$, $c)$ and $d)$ of Definition \ref{tpsgd} are fulfilled.
		
		$a)$ can be deduced when applying the gluing data functor $\mathbf{G}$ to Remark \ref{reglue} $(1)$ $(a)$. Since a functor maps identities to identities, condition $b)$ is automatically satisfied. To establish condition $c)$, we apply the gluing data functor $\mathbf{G}$ to Remark \ref{reglue} $(1)$ $(c)$. Finally, condition $d)$ is obtained by applying $\mathbf{G}$ to Remark \ref{reglue} $(1)$ $(e)$.
		
		Conversely, let $\left(\mathrm{I},\subsm{\{{\subsm{U}{i}}\}}{i\!\in\! \mathrm{I}},\subsm{\{\subsm{U}{i,j}, \upsilon_{i,j} \}}{(i,j)\! \in\! \mathrm{I}^2}, \subsm{\{\subsm{\varphi}{i,j}, \subsm{\varphi_{k}}{(i,j)}\}}{(i,j,k)\!\in\! \mathrm{I}^3}\right)$ be a topological space gluing data. We want to define $\mathbf{G}$ as an $\topo^{\op}$-gluing data functor such that $\Goi{\mbf{G}}{i}:= \subsm{U}{i}$, $\Goij{\mbf{G}}{i}{j}:=\subsm{U}{i,j}$,  $\Gnij{\mbf{G}}{\etaij{i}{j}}^{\op}:=\upsilon_{i,j}, \Gnij{\mbf{G}}{\tauij{i}{j}}^{\op}:=\subsm{\varphi}{i,j}$ and $\Gnij{\mbf{G}}{\tauijk{k}{i}{j}{k}}^{\op}:=\subsm{\varphi_{k}}{(i,j)}$ for all $i, j,k\in \mathrm{I}$. To prove that $\mathbf{G}$ is well-defined, it suffices to show that $\mathbf{G}$ preserves the equalities $\subsm{\tau}{[i,i]}=\subsm{\operatorname{id}}{[i,i]}$,  $\tauijk{k}{i}{j}{k} \circ \tauijk{i}{j}{k}{i} = \tauijk{j}{i}{k}{j}$, and $\tauijk{k}{i}{j}{k} \circ \etaijk{i}{j}{i}{k}= \etaijk{j}{i}{j}{k} \circ \tauij{i}{j}$ for all $i, j,k\in \mathrm{I}$. This follows directly from Definition \ref{tpsgd} conditions $b)$, $c)$ and $d$, respectively. 
		
	\end{proof}
\end{lemma}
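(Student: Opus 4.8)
The statement has two directions, both of which amount to unpacking definitions and invoking Remark \ref{reglue}. For the first direction, I would start from an $\topo^{\op}$-gluing data functor $\mathbf{G}$ and read off the induced data: a functor to $\topo^{\op}$ assigns to each object of $\glI{I}$ a topological space and to each morphism a continuous map (in the opposite category, so $\Gnij{\mbf{G}}{\etaij{i}{j}}^{\op}\colon \Goij{\mbf{G}}{i}{j}\to\Goi{\mbf{G}}{i}$, $\Gnij{\mbf{G}}{\tauij{i}{j}}^{\op}\colon\Goij{\mbf{G}}{i}{j}\to\Goij{\mbf{G}}{j}{i}$, and similarly for the triple-index morphism). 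The nontrivial content is checking axioms (a)--(d) of Definition \ref{tpsgd}. Each is obtained by applying $\mathbf{G}$ (or rather $\Cn{\mathbf{G}}$, then passing to $\mathbf{C}$ via the $\op$ dictionary) to an identity from Remark \ref{reglue}(1): axiom (a) $\subsm{U}{i,i}=\subsm{U}{i}$ comes from $\etaij{i}{i}=\subsm{\operatorname{id}}{i}$ together with $i=[i,i]$; axiom (b) $\subsm{\varphi}{i,i}=\operatorname{id}$ comes from $\tauij{i}{i}=\subsm{\operatorname{id}}{i}$ and functoriality (functors preserve identities); axiom (c) is the image of the cocycle relation $\tauijk{k}{i}{j}{k}\circ\tauijk{i}{j}{k}{i}=\tauijk{j}{i}{k}{j}$ in Remark \ref{reglue}(1)(c), after translating the composition order through $\op$; and axiom (d) is the image of the compatibility relation $\tauijk{k}{i}{j}{k}\circ\etaijk{i}{j}{i}{k}=\etaijk{j}{i}{j}{k}\circ\tauij{i}{j}$ of Remark \ref{reglue}(1)(e), noting that the fibre-product projection $\subsm{\mathfrak{i}}{\subsm{U}{j,i}\times_{U_j}\subsm{U}{j,i},\subsm{U}{j,i}}$ is precisely $\Gnij{\mbf{G}}{\etaijk{i}{j}{i}{k}}^{\op}$ under the identification $\Goijk{\mbf{G}}{i}{j}{k}=\Goij{\mbf{G}}{i}{j}\times_{\Goi{\mbf{G}}{i}}\Goij{\mbf{G}}{i}{k}$ guaranteed by the definition of a gluing data functor (it sends the pushout $[i,j,k]$ to the pushout in $\topo^{\op}$, i.e. the pullback in $\topo$).

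For the converse, I would define $\mathbf{G}$ on objects and on the generating morphisms $\etaij{i}{j}$, $\tauij{i}{j}$, $\tauijk{k}{i}{j}{k}$ by the formulas in the statement, and set $\Goijk{\mbf{G}}{i}{j}{k}:=\subsm{U}{i,j}\times_{U_i}\subsm{U}{i,k}$ with $\Gnij{\mbf{G}}{\etaijk{n}{i}{j}{k}}^{\op}$ the canonical projection, as forced by the gluing-data-functor requirement. Since every morphism of $\glI{I}$ is a composite of these generators and morphisms between objects are unique, a functor is determined by this assignment provided it respects all relations among the generators; by Remark \ref{reglue}(1) it suffices to check the images of $\tauij{i}{i}=\subsm{\operatorname{id}}{i}$, the cocycle relation $\tauijk{k}{i}{j}{k}\circ\tauijk{i}{j}{k}{i}=\tauijk{j}{i}{k}{j}$, and the mixed relation $\tauijk{k}{i}{j}{k}\circ\etaijk{i}{j}{i}{k}=\etaijk{j}{i}{j}{k}\circ\tauij{i}{j}$ — and the well-definedness of the pushout image. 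These are exactly conditions (b), (c), (d) of Definition \ref{tpsgd} (plus (a) for the object-level identification $\subsm{U}{i,i}=\subsm{U}{i}$), read backwards through the $\op$-correspondence.

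The main obstacle, such as it is, is bookkeeping rather than mathematics: one must be careful that the list of relations in Remark \ref{reglue}(1)(a)--(e) is genuinely a complete set of relations for the category $\glI{I}$ (so that checking them really does yield a well-defined functor), and one must be scrupulous about the direction-reversal when passing between $\glI{I}\to\topo^{\op}$ and the concrete maps in $\topo$ — in particular that $\Gnij{\mbf{G}}{f\circ g}^{\op}=\Gnij{\mbf{G}}{g}^{\op}\circ\Gnij{\mbf{G}}{f}^{\op}$, which is why the composition order in axiom (c) of Definition \ref{tpsgd} matches the cocycle relation of Remark \ref{reglue} only after reversal. I would also explicitly remark, citing Remark \ref{inve}, that each $\subsm{\varphi}{i,j}$ is automatically a homeomorphism, so that the data produced in the converse direction is honest topological gluing data in the classical sense. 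No step requires any computation beyond substituting one labelled arrow for another.
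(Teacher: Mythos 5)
Your proposal is correct and follows essentially the same route as the paper's proof: both directions are handled by applying $\mathbf{G}$ to the identities of Remark \ref{reglue}(1) (respectively reading conditions $b)$, $c)$, $d)$ of Definition \ref{tpsgd} backwards through the $\op$-correspondence), with the triple-index objects forced to be pullbacks by the pushout-preservation clause in Definition \ref{limext}. Your added care about the completeness of the relation list and the direction-reversal of composition under $\op$ is a fair articulation of what the paper leaves implicit, but it does not change the argument.
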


\subsection{Characterization of glued-up \texorpdfstring{${\protect\topo}^{\protect\op}$} \texorpdfstring{-}objects}

In the upcoming definition, we provide a tangible representation for the limit over ${\topo}^{\op}$-gluing data functors. This definition provides the concrete description that can be applied in topological settings. By constructing what we call 'the standard representative of the limit of $\mathbf{G}$,' we create a practical and complete framework for understanding and working with these limits. This representative allow us to manipulate and analyze limits effectively. It serves as a powerful tool for translating theoretical concepts into practical applications in the category of topological spaces.

\begin{definition}[Lemma]\label{deflema}
Let $\mathbf{G}$ be a $\topo^{\op}$-gluing data functor.
We define the \textbf{\textit{ standard representative of the limit of $\mathbf{G}$}} as the pair $(\subsm{Q}{\mathbf{G}}, \dindi{\iota}{Q}{\mathbf{G}}^{\op})$ where         
\begin{itemize}
    \item $\subsm{Q}{\mathbf{G}}:={\subsm{\coprod\nolimits}{i\!\in\! \mathrm{I}} \Goi{\mbf{G}}{i}}/\Rel{\mbf{G}}$ such that $\Rel{\mbf{G}}$ is the equivalence relation on the disjoint union $\subsm{\coprod\nolimits}{i\!\in\! \mathrm{I}} \Goi{\mbf{G}}{i}$ defined by $(x,i)\Rel{\mbf{G}} (y, j)$ if there exists $u\in \Goij{\mbf{G}}{i}{j}$ such that 
    $$x= \Gnij{\mbf{G}}{\etaij{i}{j}}^{\op} (u) \text{ and } y= \Gnij{\mbf{G}}{\etaij{j}{i}}^{\op} \circ \Gnij{\mbf{G}}{\tauij{i}{j}}^{\op}(u),$$ for any $(x,i), (y, j) \in \subsm{\coprod\nolimits}{i\!\in\! \mathrm{I}}\Goi{\mbf{G}}{i}$ where $i, j\in \mathrm{I}$. Moreover, $\subsm{Q}{\mathbf{G}}$ is a topological space via the final topology with respect to the family $\subsm{\{ {\dindiv{\iota}{Q}{\mbf{G}}{i}}\!\}}{i\! \in\! \mathrm{ I}}$. 
    \item $\dindi{\iota}{Q}{\mathbf{G}}^{\op}=\subsm{\{\dindiv{\iota}{Q}{\mbf{G}}{a}: \Goi{\mbf{G}}{a}\rightarrow {\subsm{Q}{\mathbf{G}}}\}}{a\!\in\! \Co{\glI{I}}}$, with ${\dindiv{\iota}{Q}{\mbf{G}}{i}}:= \pi \circ \subsm{\boldsymbol{\varepsilon}}{\Goi{\mbf{G}}{i},\subsm{\coprod\nolimits}{i\!\in\! \mathrm{I}} \Goi{\mbf{G}}{i}}$, ${\dindgij{\iota}{Q}{\mathbf{G}}{i}{j}} :={\dindiv{\iota}{Q}{\mbf{G}}{i}} \circ \Gnij{\mbf{G}}{\etaij{i}{j}}^{\op} $, and ${\dindgijk{\iota}{Q}{\mathbf{G}}{i}{j}{k}}:=\dindiv{\iota}{Q}{\mbf{G}}{i} \circ \Gnij{\mbf{G}}{\etaiijk{i}{j}{k}}^{\op} $ where $\subsm{\boldsymbol{\varepsilon}}{\Goi{\mbf{G}}{i},\subsm{\coprod\nolimits}{j\!\in\! \mathrm{I}} \Goi{\mbf{G}}{j}}$ is the canonical map from $ \Goi{\mbf{G}}{i}$ to $\subsm{\coprod\nolimits}{j\!\in\! \mathrm{I}} \Goi{\mbf{G}}{j}$ sending $x$ to $(x,i)$, and $\pi: \subsm{\coprod\nolimits}{i\!\in\! \mathrm{I}} \Goi{\mbf{G}}{i} \rightarrow {\subsm{Q}{\mathbf{G}}}$ is the quotient map, for all $i,j,k\in \mathrm{I}$.  
\end{itemize} 
For all $i \in \mathrm{ I}$, ${\dindiv{\iota}{Q}{\mbf{G}}{i}}\!\!: \Goi{\mbf{G}}{i}\rightarrow {\subsm{Q}{\mathbf{G}}}$ is a one-to-one continuous map. We will prove in Theorem \ref{gluingtop} that  $(\subsm{Q}{\mathbf{G}}, \dindi{\iota}{Q}{\mathbf{G}}^{\op})$ is indeed a limit over $\mathbf{G}$.
\end{definition}
\begin{proof} We first verify that $\Rel{\mbf{G}}$ is an equivalence relation. 
\begin{itemize}
\item We have that $\Rel{\mbf{G}}$ is reflexive since $\Goij{\mbf{G}}{i}{i}=\Goi{\mbf{G}}{i}$ and $\Gnij{\mbf{G}}{\etaij{i}{i}}^{\op} =\Gnij{\mbf{G}}{\tauij{i}{i}}^{\op}=\subsm{\operatorname{id}}{i}$, for $i\in \mathrm{I}$. 

\item We prove that $\Rel{\mbf{G}}$ is symmetric. Let $(x,i), (y, j) \in \subsm{\coprod\nolimits}{i\!\in\! \mathrm{I}}\Goi{\mbf{G}}{i}$ where $i, j\in \mathrm{I}$. Suppose that $(x,i)\Rel{\mbf{G}} (y,j)$. By definition of $\Rel{\mbf{G}}$, there exists $u\in \Goij{\mbf{G}}{i}{j}$ such that 
$$
x= \Gnij{\mbf{G}}{\etaij{i}{j}}^{\op} (u) \quad \text{and} \quad y= \Gnij{\mbf{G}}{\etaij{j}{i}}^{\op} \circ \Gnij{\mbf{G}}{\tauij{i}{j}}^{\op}(u).
$$
Taking $u':= \Gnij{\mbf{G}}{\tauij{i}{j}}^{\op}(u)$, we have $u'\in \Goij{\mbf{G}}{j}{i}$ and 
$$
y= \Gnij{\mbf{G}}{\etaij{j}{i}}^{\op} (u') \quad \text{and} \quad x= \Gnij{\mbf{G}}{\etaij{i}{j}}^{\op} \circ \Gnij{\mbf{G}}{\tauij{j}{i}}^{\op}(u').
$$

\item  We prove that $\Rel{\mbf{G}}$ is transitive. Let $(x,i),(y,j), (z,k) \in \subsm{\coprod\nolimits}{i\!\in\! \mathrm{I}}\Goi{\mbf{G}}{i}$  where $i, j,k \in \mathrm{I}$. Suppose that $(x,i)\Rel{\mbf{G}} (y,j)$  and $(y,j)\Rel{\mbf{G}} (z,k)$. By definition of $\Rel{\mbf{G}}$, there exist $u\in \Goij{\mbf{G}}{i}{j}, v\in  \Goij{\mbf{G}}{j}{k}$ such that 

\begin{itemize} 
\item $x= \Gnij{\mbf{G}}{\etaij{i}{j}}^{\op} (u)$ and $y= \Gnij{\mbf{G}}{\etaij{j}{i}}^{\op} \circ \Gnij{\mbf{G}}{\tauij{i}{j}}^{\op}(u)$, and
\item  $y= \Gnij{\mbf{G}}{\etaij{j}{k}}^{\op} (v)$ and $z= \Gnij{\mbf{G}}{\etaij{k}{j}}^{\op} \circ \Gnij{\mbf{G}}{\tauij{j}{k}}^{\op}(v)$. 
\end{itemize} 

By definition of the pullback $ \Goij{\mbf{G}}{j}{i}\subsm{\times}{\Goi{\mbf{G}}{j}}  \Goij{\mbf{G}}{j}{k}$, we have 
$$(\Gnij{\mbf{G}}{\tauij{i}{j}}^{\op}(u),v)\in  \Goij{\mbf{G}}{j}{i}\subsm{\times}{\Goi{\mbf{G}}{j}}  \Goij{\mbf{G}}{j}{k}.$$

For simplicity, without loss of generality, we can identify:

\begin{itemize} 
\item $\Goijk{\mbf{G}}{j}{i}{k}$ with the pullback $\Goij{\mbf{G}}{j}{i}\subsm{\times}{\Goi{\mbf{G}}{j}}  \Goij{\mbf{G}}{j}{k}$,
\item $\Goijk{\mbf{G}}{i}{j}{k}$ with the pullback $\Goij{\mbf{G}}{i}{j}\subsm{\times}{\Goi{\mbf{G}}{i}}  \Goij{\mbf{G}}{i}{k}$,
\item $\Goijk{\mbf{G}}{k}{j}{i}$ with the pullback $ \Goij{\mbf{G}}{k}{j}\subsm{\times}{\Goi{\mbf{G}}{k}}  \Goij{\mbf{G}}{k}{i}$,
\end{itemize}

which is possible since by definition $\mbf{G}$ sends pushouts to pushouts, and pushouts in the opposite category are pullbacks. 

By our assumptions, we know that $y= \Gnij{\mbf{G}}{\etaij{j}{i}}^{\op} \circ \Gnij{\mbf{G}}{\tauij{i}{j}}^{\op}(u)=\Gnij{\mbf{G}}{\etaij{j}{k}}^{\op} (v)$. We set $\alpha:=\Gnij{\mbf{G}}{\tauijk{k}{j}{i}{k}}^{\op}(\Gnij{\mbf{G}}{\tauij{i}{j}}^{\op}(u),v)$, and we know that $\alpha \in \Goijk{\mbf{G}}{i}{j}{k}$. Moreover,

$$
\begin{array}{lll} 
\subsm{\mathfrak{i}}{\Goijk{\mbf{G}}{i}{j}{k}, \Goij{\mbf{G}}{i}{j}}(\alpha) & = & \subsm{\mathfrak{i}}{\Goijk{\mbf{G}}{i}{j}{k}, \Goij{\mbf{G}}{i}{j}} \circ \Gnij{\mbf{G}}{\tauijk{k}{j}{i}{k}}^{\op}(\Gnij{\mbf{G}}{\tauij{i}{j}}^{\op}(u),v)\\
& = & \Gnij{\mbf{G}}{\tauij{j}{i}}^{\op}\circ \subsm{\mathfrak{i}}{\Goijk{\mbf{G}}{j}{i}{k}, \Goij{\mbf{G}}{j}{i}}(\Gnij{\mbf{G}}{\tauij{i}{j}}^{\op}(u),v)=u,
\end{array}
$$
by Remark \ref{reglue} $(1) (e)$.
Thus $\alpha=(u,\subsm{\mathfrak{i}}{\Goijk{\mbf{G}}{i}{j}{k}, \Goij{\mbf{G}}{i}{k}}(\alpha))$. 

Now, we set $w:=\subsm{\mathfrak{i}}{\Goijk{\mbf{G}}{i}{j}{k}, \Goij{\mbf{G}}{i}{k}}(\alpha)$, and we know that $w\in \Goij{\mbf{G}}{i}{k}$. By definition of the pullback $\Goijk{\mbf{G}}{i}{j}{k}$, we have $x=\Gnij{\mbf{G}}{\etaij{i}{j}}^{\op} (u)=\Gnij{\mbf{G}}{\etaij{i}{k}}^{\op} (w)$. 

Now we want to prove that $z=\Gnij{\mbf{G}}{\etaij{k}{i}}^{\op}\circ \Gnij{\mbf{G}}{\tauij{i}{k}}^{\op}(w)$. We have:
\begin{align*}
&\Gnij{\mbf{G}}{\etaij{k}{i}}^{\op}\circ \Gnij{\mbf{G}}{\tauij{i}{k}}^{\op}(w)\\
=&\Gnij{\mbf{G}}{\etaij{k}{i}}^{\op}\circ \Gnij{\mbf{G}}{\tauij{i}{k}}^{\op}\circ \subsm{\mathfrak{i}}{\Goijk{\mbf{G}}{i}{j}{k}, \Goij{\mbf{G}}{i}{k}}\circ \Gnij{\mbf{G}}{\tauijk{k}{j}{i}{k}}^{\op}(\Gnij{\mbf{G}}{\tauij{i}{j}}^{\op}(u),v)\\
=&\Gnij{\mbf{G}}{\etaij{k}{i}}^{\op}\circ \subsm{\mathfrak{i}}{\Goijk{\mbf{G}}{k}{j}{i}, \Goij{\mbf{G}}{k}{i}}\circ \Gnij{\mbf{G}}{\tauijk{j}{i}{k}{j}}^{\op} \circ  \Gnij{\mbf{G}}{\tauijk{k}{j}{i}{k}}^{\op}(\Gnij{\mbf{G}}{\tauij{i}{j}}^{\op}(u),v)\\
& \;\text{by Remark \ref{reglue} $(1) (e)$}\\
=&\Gnij{\mbf{G}}{\etaij{k}{i}}^{\op}\circ \subsm{\mathfrak{i}}{\Goijk{\mbf{G}}{k}{j}{i}, \Goij{\mbf{G}}{k}{i}}\circ \Gnij{\mbf{G}}{\tauijk{i}{j}{k}{i}}^{\op} (\Gnij{\mbf{G}}{\tauij{i}{j}}^{\op}(u),v)\\
&\;\text{by Remark \ref{reglue} $(1) (c)$}\\
=&\Gnij{\mbf{G}}{\etaij{k}{j}}^{\op}\circ \Gnij{\mbf{G}}{\tauij{j}{k}}^{\op}\circ  \subsm{\mathfrak{i}}{\Goijk{\mbf{G}}{j}{i}{k}, \Goij{\mbf{G}}{j}{k}} (\Gnij{\mbf{G}}{\tauij{i}{j}}^{\op}(u),v)\\
&\;\text{by Remark \ref{reglue} $(1) (e)$}\\
=&\Gnij{\mbf{G}}{\etaij{k}{j}}^{\op} \circ \Gnij{\mbf{G}}{\tauij{j}{k}}^{\op}(v)=z.
\end{align*}
\end{itemize}
\noindent This completes proving that $\Rel{\mbf{G}}$ is an equivalence relation.

Finally, we prove that the map ${\dindiv{\iota}{Q}{\mbf{G}}{i}}\!\!: \Goi{\mbf{G}}{i}\rightarrow {\subsm{Q}{\mathbf{G}}}$ is a one-to-one continuous map. Since $\subsm{Q}{\mathbf{G}}$ is a topological space under the final topology with respect to the family $\subsm{\{{\dindiv{\iota}{Q}{\mbf{G}}{i}}\!\}}{i\!\in\! \mathrm{I}}$, it follows that ${\dindiv{\iota}{Q}{\mbf{G}}{i}}$ is a continuous map, for all $i \in \mathrm{I}$. So, we only need to prove that ${\dindiv{\iota}{Q}{\mbf{G}}{i}}$ is one-to-one and open, for all $i \in \mathrm{I}$.

To establish the one-to-one property, consider $x,y\in \Goi{\mbf{G}}{i}$ such that ${\dindiv{\iota}{Q}{\mbf{G}}{i}}\!\!(x)={\dindiv{\iota}{Q}{\mbf{G}}{i}}\!\!(y)$. According to the definition of ${\dindiv{\iota}{Q}{\mbf{G}}{i}}$, we have $\pi(x,i)=\pi(y,i)$. Hence, by the definition of $\Rel{\mbf{G}}$, it follows that $x=y$, since $\Goij{\mbf{G}}{i}{i}=\Goi{\mbf{G}}{i}$ and $\Gnij{\mbf{G}}{\etaij{i}{i}}^{\op} =\Gnij{\mbf{G}}{\tauij{i}{i}}^{\op}=\subsm{\operatorname{id}}{i}$, for $i\in \mathrm{I}$.

\end{proof}

\begin{remark}\label{remtop}
	Let $\mathbf{G}$ be an $\topo^{\op}$-gluing data functor and $i \in \mathrm{ I}$. 
		 For all $q\in \subsm{Q}{\mathbf{G}}$ there exists $i\in \mathrm{ I}$ and $x\in \Goi{\mbf{G}}{i}$ such that $\dindiv{\iota}{Q}{\mbf{G}}{i}\!(x)=q$. Moreover, by definition of $\dindiv{\iota}{Q}{\mbf{G}}{i}$ we obtain $q=\pi(x,i)$.
	
\end{remark}

We introduce the 'Gluing Topological Spaces Theorem'. This theorem establishes the equivalence between a topological space obtained by gluing objects using the specific gluing data functor constructed above and the conventional construction of glued-up spaces. It elucidates conditions that allow us to view a 'glued-up' object as a limit of this functor, catering to a more intuitive understanding of an index gluing category. Furthermore, this theorem explores properties integral to comprehending these composite spaces. As we embark on the journey of proving this theorem, we shall construct topological spaces through the process of gluing, leaving no essential proof component hidden from view.

\begin{theorem} \label{gluingtop}
	Given a $\topo^{\op}$-gluing data functor $\mathbf{G}$. Let $Q$ be a topological space, $\dindi{\iota}{Q}{}^{\op}$ be a family $\subsm{\{\dindi{\iota}{Q}{a}^{\op}\}}{a\! \in\! \Co{\glI{I}}}$ where $\dindi{\iota}{Q}{a}\!:  \Goi{\mbf{G}}{a}\rightarrow Q$ are in $\Cn{\topo}$, for all $a \in \Co{\glI{I}}$. The following assertions are equivalent:
	\begin{enumerate}
		\item  $Q$ is a glued-up $\topo^{\op}$-object along $\mathbf{G}$ through $\dindi{\iota}{Q}{}^{\op}$;
		\item $(Q, \subsm{\iota}{Q}^{\op})$ is a cone over $\mathbf{G}$ isomorphic to $(\subsm{Q}{\mathbf{G}},\dindi{\iota}{Q}{\mathbf{G}}^{\op})$ in the category of cones over $\mathbf{G}$;		
		\item For all $i,j,k\in \mathrm{I}$ and  $n \in \{ j,k\}$, $(Q, \dindi{\iota}{Q}{}^\op)$ satisfies the following properties:
		\begin{itemize}
			\item[(a)] $\dindij{\iota}{Q}{i}{j} =\dindi{\iota}{Q}{i} \circ  \Gnij{\mbf{G}}{\etaij{i}{j}}^{\op}$;  
			\item[(b)] $\dindijk{\iota}{Q}{i}{j}{k} ={\iota_{Q_{[i,n]}}}  \circ \Gnij{\mbf{G}}{\etaijk{n}{i}{j}{k}}^{\op}$;
			\item[(c)]  $\dindi{\iota}{Q}{i} \circ \Gnij{\mbf{G}}{\etaij{i}{j}}^{\op} ={\dindi{\iota}{Q}{j}} \circ \Gnij{\mbf{G}}{\etaji{i}{j}}^{\op}  \circc\Gnij{\mbf{G}}{\tauij{i}{j}}^\op$;
			\item[(d)] $Q=\subsm{\cup}{i\!\in\! \mathrm{I}} {\dindsi{\iota}{Q}{i}}\!\!(\Goi{\mbf{G}}{i})$;
			\item[(e)] $\dindi{\iota}{Q}{j}\!(\Gnij{\mbf{G}}{\etaji{i}{j}}^{\op} (\Goij{\mbf{G}}{j}{i}))=\dindi{\iota}{Q}{i}\!(\Gnij{\mbf{G}}{\etaji{j}{i}}^{\op} (\Goij{\mbf{G}}{i}{j}))\!=\dindi{\iota}{Q}{i}\!(\Goi{\mbf{G}}{i})\cap {\dindi{\iota}{Q}{j}}\!(\Goi{\mbf{G}}{j})$;
			\item[(f)] $\dindi{\iota}{Q}{i}$ is a one-to-one continuous map.
		\end{itemize}	
	\end{enumerate}

	\begin{proof} 
		We start the proof proving that $(\subsm{Q}{\mathbf{G}},\dindi{\iota}{Q}{\mathbf{G}}^{\op})$ satisfies $(3)$ $(d)$ and $(e)$ as we will need it throughout the proof. 
		\begin{itemize} 
			\item We start by proving that $\subsm{Q}{\mathbf{G}}= \subsm{\cup}{i\!\in\! \mathrm{I}} {\dindiv{\iota}{Q}{\mbf{G}}{i}}\!\!(\Goi{\mbf{G}}{i})$. Let $i\in \mathrm{I}$. Using the definitions of ${\dindiv{\iota}{Q}{\mbf{G}}{i}}\!\!(\Goi{\mbf{G}}{i})$ and $\subsm{Q}{\mathbf{G}}$, we obtain $\subsm{Q}{\mathbf{G}}\subseteq \subsm{\cup}{i\!\in\! \mathrm{I}} {\dindiv{\iota}{Q}{\mbf{G}}{i}}\!\!(\Goi{\mbf{G}}{i})$. On the other hand, using the definition of $\subsm{\cup}{i\!\in\! \mathrm{I}} {\dindiv{\iota}{Q}{\mbf{G}}{i}}\!\!(\Goi{\mbf{G}}{i})$ and ${\dindiv{\iota}{Q}{\mbf{G}}{i}}\!\!(\Goi{\mbf{G}}{i})$, we get $\subsm{\cup}{i\!\in\! \mathrm{I}} {\dindiv{\iota}{Q}{\mbf{G}}{i}}\!\!(\Goi{\mbf{G}}{i})\subseteq \subsm{Q}{\mathbf{G}}$.

Hence, property $(3)(d)$ is satisfied.

\item We need to prove that $\dindi{\iota}{Q}{i} \circ \Gnij{\mbf{G}}{\etaij{i}{j}}^{\op} ={\dindi{\iota}{Q}{j}} \circ \Gnij{\mbf{G}}{\etaji{i}{j}}^{\op}  \circc\Gnij{\mbf{G}}{\tauij{i}{j}}$. Let $x \in \Goij{\mbf{G}}{i}{j}$. We have:

\begin{align*}
&\dindi{\iota}{Q}{i} \circ \Gnij{\mbf{G}}{\etaij{i}{j}}^{\op} (x) = \pi ( \Gnij{\mbf{G}}{\etaij{i}{j}}^{\op} (x), i), \text{ and } \\
&{\dindi{\iota}{Q}{j}} \circ \Gnij{\mbf{G}}{\etaji{i}{j}}^{\op}  \circc\Gnij{\mbf{G}}{\tauij{i}{j}} (x) = \pi ( \Gnij{\mbf{G}}{\etaji{i}{j}}^{\op}  \circ\Gnij{\mbf{G}}{\tauij{i}{j}} (x), j)
\end{align*}

Therefore, we obtain the equality wanted by the definition of the equivalence relation $\Rel{\mbf{G}}$. Hence, property $(3)(c)$ is satisfied.

\item Finally, we prove that, for all $i,j \in \mathrm{I}$, we have:
\[
\dindi{\iota}{Q}{j}\!(\Gnij{\mbf{G}}{\etaji{i}{j}}^{\op} (\Goij{\mbf{G}}{j}{i}))=\dindi{\iota}{Q}{i}\!(\Gnij{\mbf{G}}{\etaji{j}{i}}^{\op} (\Goij{\mbf{G}}{i}{j}))\!=\dindi{\iota}{Q}{i}\!(\Goi{\mbf{G}}{i})\cap {\dindi{\iota}{Q}{j}}\!(\Goi{\mbf{G}}{j}).
\]
Let $i,j\in \mathrm{I}$ and $x\in \dindi{\iota}{Q}{j}\!(\Gnij{\mbf{G}}{\etaji{i}{j}}^{\op} (\Goij{\mbf{G}}{j}{i}))$, there exists $u\in \Goij{\mbf{G}}{j}{i}$ such that:

\[
\begin{array}{lll} x&=&\dindi{\iota}{Q}{j}\!(\Gnij{\mbf{G}}{\etaji{i}{j}}^{\op}(u))=\pi(\Gnij{\mbf{G}}{\etaji{i}{j}}^{\op}(u),j)\\
&=&\pi(\Gnij{\mbf{G}}{\etaaji{i}{j}}^{\op}\circ \Gnij{\mbf{G}}{\tauij{j}{i}}(u),j)=\pi( \Gnij{\mbf{G}}{\etaij{i}{j}}^{\op}(v),i)
\end{array}
\]
\noindent
where $v\in \Gnij{\mbf{G}}{\tauij{j}{i}}(u)\in \Goij{\mbf{G}}{i}{j}$. Hence we obtain $x\in \dindi{\iota}{Q}{i}\!(\Gnij{\mbf{G}}{\etaji{j}{i}}^{\op} (\Goij{\mbf{G}}{i}{j}))$. We can prove similarly the reverse inclusion. Let $x\in \dindi{\iota}{Q}{i}\!(\Gnij{\mbf{G}}{\etaji{j}{i}}^{\op} (\Goij{\mbf{G}}{i}{j})$, we know that there exists $u\in \Goij{\mbf{G}}{i}{j})$ such that $x=\dindi{\iota}{Q}{i}\!(\Gnij{\mbf{G}}{\etaji{j}{i}}^{\op}(u))$.

We have $x\in \dindi{\iota}{Q}{i}(\Goi{\mbf{G}}{i})$ since $\Gnij{\mbf{G}}{\etaji{j}{i}}^{\op}(u)\in \Goi{\mbf{G}}{i}$. Let $x\in \dindi{\iota}{Q}{j}\!(\Gnij{\mbf{G}}{\etaji{i}{j}}^{\op} (\Goij{\mbf{G}}{j}{i}))$ then there exists $v\in \Goij{\mbf{G}}{j}{i}$ such that $x=\dindi{\iota}{Q}{j}\!(\Gnij{\mbf{G}}{\etaji{i}{j}}^{\op}(v))$

This implies that $x\in \dindi{\iota}{Q}{j}(\Goi{\mbf{G}}{j})$ since $\Gnij{\mbf{G}}{\etaji{i}{j}}^{\op}(v)\in \Goi{\mbf{G}}{j}$. Therefore, $x\in \dindi{\iota}{Q}{i}\!(\Goi{\mbf{G}}{i})\cap {\dindi{\iota}{Q}{j}}\!(\Goi{\mbf{G}}{j})$. Conversely, suppose that $x\in \dindi{\iota}{Q}{i}\!(\Goi{\mbf{G}}{i})\cap {\dindi{\iota}{Q}{j}}\!(\Goi{\mbf{G}}{j})$. There exists $u\in \Goi{\mbf{G}}{i}$ and $v\in \Goi{\mbf{G}}{j}$ such that $x=\pi(u,i)=\pi(v,j)$. That is, $(u,i)\Rel{\mbf{G}} (v,j)$. By the definition of the relation $\Rel{\mbf{G}}$, we know that there exists $w\in \Goij{\mbf{G}}{i}{j}$ such that $u= \Gnij{\mbf{G}}{\etaij{i}{j}}^{\op} (w)$ and $v= \Gnij{\mbf{G}}{\etaij{j}{i}}^{\op} \circ \Gnij{\mbf{G}}{\tauij{i}{j}}^{\op}(w)$. We conclude that $x\in \dindi{\iota}{Q}{i}\!(\Gnij{\mbf{G}}{\etaji{j}{i}}^{\op} (\Goij{\mbf{G}}{i}{j}))$. This completes the proof of property $(3)(e)$.

		\end{itemize}
		\begin{enumerate}
			\item[$(1)\Rightarrow (2)$] 
			Let $i\in\mathrm{I}$. 
			We prove that $(\subsm{Q}{\mathbf{G}},\dindi{\iota}{Q}{\mathbf{G}}^{\op})$ is a terminal cone over $\mathbf{G}$. 
			\begin{itemize} 			
				\item Our aim is to prove that the pair $(\subsm{Q}{\mathbf{G}},\dindi{\iota}{Q}{\mathbf{G}}^{\op})$ satisfies property $(1) (a)$, $(b)$, and $(c)$ of Remark \ref{oppc}. Property $(1) (a)$ holds by definition of $\dindgij{\iota}{Q}{\mbf{G}}{i}{j}$. Property $(1) (b)$ follows directly from the definition of ${\dindgij{\iota}{Q}{\mbf{G}}{i}{j}}$ and $\dindgijk{\iota}{Q}{\mathbf{G}}{i}{j}{k}$. Property $(1) (c)$ has been prove above.
			
				\item Finally, suppose that $(Q',{\subsm{\iota}{{Q'}}})$ is another pair  making the following diagram
				\begin{figure}[H] 
					\begin{center}
						{\tiny	\begin{tikzcd}[column sep=normal]
								&\Goij{\mbf{G}}{i}{j}\arrow[]{rrrr}{\Gnij{\mbf{G}}{\etaji{i}{j}}^{\op} \circ \Gnij{\mbf{G}}{\tauij{i}{j}}^{\op}} \arrow[swap]{d}{\Gnij{\mbf{G}}{\etaij{i}{j}}^{\op}}& && &\Goi{\mbf{G}}{j}\arrow[]{d} {{\dindpi{\iota}{{Q}}{\!j}}}\\ 
								& \Goi{\mbf{G}}{i} \arrow[swap]{rrrr}{\dindpi{\iota}{{Q}}{i}}&& & &Q'
					\end{tikzcd}}\end{center} 	 \caption{}\label{topol130}	
				\end{figure}
				\noindent commute, for all $i,j\in \mathrm{I}$. We want to prove that there exists a unique map $\mu:\subsm{Q}{\mathbf{G}}\rightarrow Q'$ in $\Cn{\mathbf{\topo}}$ making the following diagram
				\begin{figure}[H]
					\begin{center}
						{\tiny  \begin{tikzcd}[column sep=normal]
								\Goij{\mbf{G}}{i}{j}\arrow[swap]{d}{\Gnij{\mbf{G}}{\etaij{i}{j}}^{\op}}  \arrow[]{rrrr}{{\Gnij{\mbf{G}}{\etaji{i}{j}}^{\op}\circ \Gnij{\mbf{G}}{\tauij{i}{j}}^{\op}}}  && & &
								\Goi{\mbf{G}}{j}\arrow[bend left=20]{ddr}{{\dindpi{\iota}{{Q}}{j}}} \arrow[]{d}{{\dindiv{\iota}{Q}{\mbf{G}}{j}}}\\ 
								\Goi{\mbf{G}}{i}\arrow[swap,bend right=15]{drrrrr}{\dindpi{\iota}{{Q}}{i}}    \arrow[swap]{rrrr}{{\dindiv{\iota}{Q}{\mbf{G}}{i}}}  &&&&
								\subsm{Q}{\mathbf{G}}\arrow[dashed]{dr}{ \mu} \\ 
								&&&& &Q'
						\end{tikzcd} }
					\end{center}\caption{}\label{topol132}
				\end{figure}
				\noindent commutes, for all $i,j\in \mathrm{I}$. 
				\begin{itemize} 
					\item If such a $\mu$ exists we have $\mu\circ {\dindiv{\iota}{Q}{\mbf{G}}{i}}=\dindpi{\iota}{{Q}}{i}$, for all $i\in \mathrm{I}$. Let $q\in {\subsm{Q}{\mathbf{G}}}$. By Remark \ref{remtop}, there is $y\in \subsm{\coprod\nolimits}{i\!\in\! \mathrm{I}} \Goi{\mbf{G}}{i}$, such that $q= \pi (y)$. 
					Thus, we have $q= \dindiv{\iota}{Q}{\mbf{G}}{i}\!\!(x)$. Therefore, if $\mu$ exists, it is uniquely determined by $\mu(q)=\dindpi{\iota}{{Q}}{i}(x)$. 
					\item We will demonstrate the well-definedness of $\mu$. Let $i, j \in \mathrm{I}$, $x \in \Goi{\mbf{G}}{i}$, and $y \in \Goi{\mbf{G}}{j}$ such that $\dindiv{\iota}{Q}{\mbf{G}}{i}\!(x) = \dindiv{\iota}{Q}{\mbf{G}}{j}\!(y)$. We prove that $\dindpi{\iota}{{Q}}{i}(x)= \dindpi{\iota}{{Q}}{i}(y)$.
					
	By definition of $\dindiv{\iota}{Q}{\mbf{G}}{i}$ and $\dindiv{\iota}{Q}{\mbf{G}}{j}$ implies $\pi(x,i)=\pi(y,j)$. This implies that $(x,i)\Rel{\mbf{G}} (y,j)$. By definition of the relation $\Rel{\mbf{G}}$ there exists $u\in \Goij{\mbf{G}}{i}{j}$ such that \begin{equation}\label{rg1}x= \Gnij{\mbf{G}}{\etaij{i}{j}}^{\op} (u)\end{equation} and \begin{equation}\label{rg2}y= \Gnij{\mbf{G}}{\etaij{j}{i}}^{\op} \circ \Gnij{\mbf{G}}{\tauij{i}{j}}^{\op}(u).\end{equation} Applying $\dindpi{\iota}{Q}{i}$ to Equation \ref{rg1} and $\dindpi{\iota}{Q}{j}$ to Equation \ref{rg2} we obtain $$\dindpi{\iota}{Q}{i}(x)=\dindpi{\iota}{Q}{i}(\Gnij{\mbf{G}}{\etaij{i}{j}}^{\op} (u))$$ and $$\dindpi{\iota}{Q}{j}(y)=\dindpi{\iota}{Q}{j}(\Gnij{\mbf{G}}{\etaij{j}{i}}^{\op} \circ \Gnij{\mbf{G}}{\tauij{i}{j}}^{\op}(u)).$$ By commutativity of the diagram in Figure \ref{topol130} we get $\dindpi{\iota}{Q}{i}(x)=\dindpi{\iota}{Q}{j}(y)$.
					
					Hence, we can conclude that the map $\mu: Q \rightarrow Q'$ is indeed well-defined.
					
					\item We now prove that $\mu$ is continuous. Let $U\subsm{\subseteq}{\operatorname{op}} {Q'}$, we prove that \\$\mu^{-1}(U)\subsm{\subseteq}{\operatorname{op}} \subsm{Q}{\mathbf{G}}$. By definition of the final topology with respect to ${\dindiv{\iota}{Q}{\mbf{G}}{i}}$, we need to prove that $\dindiv{\iota}{Q}{\mbf{G}}{i}^{-1}\!(\mu^{-1}(U))\subsm{\subseteq}{\operatorname{op}} \Goi{\mbf{G}}{i}$ for all $i \in \mathrm{I}$. Let $i\in \mathrm{I}$. We know that $\dindiv{\iota}{Q}{\mbf{G}}{i}^{-1}\!(\mu^{-1}(U))=(\mu\circ {\dindiv{\iota}{Q}{\mbf{G}}{i}})^{-1}(U)= \dindpi{\iota}{Q}{i}^{-1} (U)$, by the definition of $\mu$. Since $\dindpi{\iota}{{Q}}{i}$ is a morphism in $\Cn{\topo}$, $\dindpi{\iota}{{Q}}{i}^{-1}(U)\subsm{\subseteq}{\operatorname{op}} \Goi{\mbf{G}}{i}$. Hence this proves that $\mu^{-1}(U)\subsm{\subseteq}{\operatorname{op}} \subsm{Q}{\mathbf{G}}$. Therefore $\mu$ is continuous.
					 
				\end{itemize}
			\end{itemize}
			
			This completes proving that $\subsm{Q}{\mathbf{G}}$ is a glued-up $\topo^{\op}$-object along $\mathbf{G}$ through $\dindi{\iota}{Q}{\mathbf{G}}^{\op}$. Finally, we prove (2) using the uniqueness of limits up to isomorphism. 
			\item[$(2) \Rightarrow (3)$] Suppose $(Q, \subsm{\iota}{Q}^{\op})$ is a cone over $\mathbf{G}$ isomorphic to $(\subsm{Q}{\mathbf{G}}, \dindi{\iota}{Q}{\mathbf{G}}^{\op})$ in the category of cones over $\mathbf{G}$. Since $(Q, \subsm{\iota}{Q}^{\op})\simeq (\subsm{Q}{\mathbf{G}}, \dindi{\iota}{Q}{\mathbf{G}}^{\op})$, we obtain that $(Q, \subsm{\iota}{Q}^{\op})$ satisfies $(3)$ $(a)$-$(f)$ is equivalent to proving that $(\subsm{Q}{\mathbf{G}}, \dindi{\iota}{Q}{\mathbf{G}}^{\op})$ satisfies $(3)$ $(a)$-$(f)$.  Since $ (\subsm{Q}{\mathbf{G}}, \dindi{\iota}{Q}{\mathbf{G}}^{\op})$ is a cone over $\mathbf{G}$, we have that properties $(3)$ $(a)$-$(c)$ are satisfied. Finally, properties $(3)$ $(d)$-$(f)$ have been proven above and this concludes the proof.
			\item[$(3)\Rightarrow (1)$] Suppose that $(Q, \dindi{\iota}{Q}{}^{\op})$ satisfies the properties $(a)$-$(f)$ of the statement $(3)$. We want to prove that $Q$ is glued-up $\topo^{\op}$-space along $\mathbf{G}$ through $\dindi{\iota}{Q}{}^{\op}$. Property $(1) (a)$ and $(1) (b)$ of the Remark \ref{oppc} are precisely the  properties $(a)$ and $(b)$ of statement $(3)$ of this theorem. 
			The following diagram
			\begin{figure}[H]\begin{center}
					{\tiny\begin{tikzcd}[column sep=normal]
							&\Goij{\mbf{G}}{i}{j}\arrow[]{rrrr}{{\Gnij{\mbf{G}}{\etaji{i}{j}}^{\op}\circ \Gnij{\mbf{G}}{\tauij{i}{j}}^{\op}}} \arrow[swap]{d}{\Gnij{\mbf{G}}{\etaij{i}{j}}^{\op}} & &&& \Goi{\mbf{G}}{j}\arrow[]{d} {{\dindi{\iota}{Q}{j}}}\\
							& \Goi{\mbf{G}}{i} \arrow[swap]{rrrr}{\dindi{\iota}{Q}{i}}&& & & Q
				\end{tikzcd}}\end{center}  \caption{}\label{topol122}   \end{figure}
			\noindent 
			commutes for all $i,j\in \mathrm{I}$, by property $(3) (c)$ of our assumptions.
			Next, let $(Q',{\subsm{\iota}{Q'}})$ be another pair as above, making the following diagram 
			\begin{figure}[H]\begin{center}
					{\tiny	\begin{tikzcd}[column sep=normal]
							&\Goij{\mbf{G}}{i}{j}\arrow[]{rrrr}{{\Gnij{\mbf{G}}{\etaji{i}{j}}^{\op}\circ \Gnij{\mbf{G}}{\tauij{i}{j}}^{\op}}} \arrow[swap]{d}{\Gnij{\mbf{G}}{\etaij{i}{j}}^{\op}} & &&& \Goi{\mbf{G}}{j}\arrow[]{d} {{\dindpi{\iota}{Q}{\!j}}}\\
							& \Goi{\mbf{G}}{i} \arrow[swap]{rrrr}{\dindpi{\iota}{Q}{\!i}}&& &&Q'
				\end{tikzcd}}\end{center}  \caption{}\label{topol123}   \end{figure}
			\noindent commute. We want to prove that there exists a unique map $\mu: Q\rightarrow Q'$ in $\Cn{\mathbf{\topo}}$ making the following diagram 
			\begin{figure}[H]
				\begin{center}
					{\tiny  \begin{tikzcd}[column sep=normal]
							\Goij{\mbf{G}}{i}{j} \arrow[]{rrrr}{{\Gnij{\mbf{G}}{\etaji{i}{j}}^{\op}\circ \Gnij{\mbf{G}}{\tauij{i}{j}}^{\op}}}\arrow[swap]{d}{\Gnij{\mbf{G}}{\etaij{i}{j}}^{\op}} && & &
							\Goi{\mbf{G}}{j}  \arrow[]{d}{{\dindi{\iota}{Q}{j}}} \arrow[bend left=20]{ddr}{{\dindpi{\iota}{{Q}}{j}}}\\ 
							\Goi{\mbf{G}}{i}\arrow[swap]{rrrr}{\dindi{\iota}{Q}{i}} \arrow[swap,bend right=15]{drrrrr}{\dindpi{\iota}{{Q}}{i}}    & && &
							Q\arrow[dashed,near start]{dr}{\exists!\mu} \\
							& & && &Q' 
					\end{tikzcd} }
				\end{center}\caption{}\label{topol14}
			\end{figure}
			\noindent commute, for all $i,j\in \mathrm{I}$. If such $\mu$ exists the commutativity of the diagram gives $\mu\circ \dindi{\iota}{Q}{i}\!(x)=\dindpi{\iota}{Q}{i}(x)$ for all $i\in \mathrm{I}$ and $x\in  \Goi{\mbf{G}}{i}$. Thus, $\mu$ is uniquely determined by $\dindpi{\iota}{Q}{i}$. Indeed, by property $(3) (d)$ of the assumption, given $q \in {Q}$, by Remark \ref{remtop}, there exists $i\in \mathrm{I}$ and $x\in \Goi{\mbf{G}}{i}$ such that $q= \dindi{\iota}{Q}{i}\!(x)$ and $\mu(q)=\dindpi{\iota}{Q}{i}(x)$. We prove that such a $\mu$ is a well-defined morphism in $\Cns{\topo}$ in the exact same way that we have proven these properties for the $\mu$ obtained in $(1)\Rightarrow (2)$   This completes the proof.
		\end{enumerate}
	\end{proof}
\end{theorem}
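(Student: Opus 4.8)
The plan is to establish the three-way equivalence by proving the cycle $(1)\Rightarrow(2)\Rightarrow(3)\Rightarrow(1)$, exactly as the rest of the paper does for analogous statements, with the bulk of the work concentrated in showing that the explicit pair $(\subsm{Q}{\mathbf{G}},\dindi{\iota}{Q}{\mathbf{G}}^{\op})$ from Definition~\ref{deflema} is a terminal cone over $\mathbf{G}$. Before entering the cycle I would first verify, once and for all, that the standard representative satisfies properties $(3)(c)$, $(3)(d)$ and $(3)(e)$, since these are purely set-theoretic/topological statements about the quotient $\subsm{\coprod}{i}\Goi{\mbf{G}}{i}/\Rel{\mbf{G}}$ and the maps $\dindiv{\iota}{Q}{\mbf{G}}{i}$, and they will be reused in every implication. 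Property $(3)(d)$ is immediate from the definition of $\subsm{Q}{\mathbf{G}}$ as that very quotient; property $(3)(c)$ is a direct unravelling of the definition of $\Rel{\mbf{G}}$ applied to $x$ and $\Gnij{\mbf{G}}{\tauij{i}{j}}^{\op}(x)$; and property $(3)(e)$ follows by chasing an element of $\dindi{\iota}{Q}{j}(\Gnij{\mbf{G}}{\etaji{i}{j}}^{\op}(\Goij{\mbf{G}}{j}{i}))$ through $\Rel{\mbf{G}}$ and using the isomorphism $\Gnij{\mbf{G}}{\tauij{i}{j}}^{\op}$ from Remark~\ref{rem1}(2), together with the already-established fact (proved inside Definition~\ref{deflema}) that each $\dindiv{\iota}{Q}{\mbf{G}}{i}$ is injective.

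For $(1)\Rightarrow(2)$ I would show directly that $(\subsm{Q}{\mathbf{G}},\dindi{\iota}{Q}{\mathbf{G}}^{\op})$ is a terminal cone: it is a cone because properties $(1)(a)$--$(c)$ of Remark~\ref{oppc} hold ($(a)$ and $(b)$ hold by the very definition of $\dindgij{\iota}{Q}{\mbf{G}}{i}{j}$ and $\dindgijk{\iota}{Q}{\mathbf{G}}{i}{j}{k}$, and $(c)$ is the property $(3)(c)$ just verified), so by Theorem~\ref{preglued}/Remark~\ref{oppc} it remains to check the universal property. Given any competing cone $(Q',\subsm{\iota}{Q'}^{\op})$, one is forced to define $\mu(q):=\dindpi{\iota}{Q}{i}(x)$ whenever $q=\dindiv{\iota}{Q}{\mbf{G}}{i}(x)$; well-definedness is an $\Rel{\mbf{G}}$-chase using the commuting square of Figure~\ref{topol130}, uniqueness is forced by surjectivity of the $\dindiv{\iota}{Q}{\mbf{G}}{i}$ onto $\subsm{Q}{\mathbf{G}}$ (Remark~\ref{remtop}), and continuity follows because $\subsm{Q}{\mathbf{G}}$ carries the final topology with respect to the $\dindiv{\iota}{Q}{\mbf{G}}{i}$ and $\dindiv{\iota}{Q}{\mbf{G}}{i}^{-1}(\mu^{-1}(U))=\dindpi{\iota}{Q}{i}^{-1}(U)$ is open. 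This makes $\subsm{Q}{\mathbf{G}}$ a glued-up object, and since $Q$ is assumed glued-up along the same data, uniqueness of limits up to isomorphism gives the isomorphism of cones in $(2)$.

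For $(2)\Rightarrow(3)$ I would note that an isomorphism of cones transports each of the properties $(3)(a)$--$(f)$ from $(\subsm{Q}{\mathbf{G}},\dindi{\iota}{Q}{\mathbf{G}}^{\op})$ to $(Q,\subsm{\iota}{Q}^{\op})$: properties $(a)$--$(c)$ hold because the standard representative is a cone, properties $(d)$--$(e)$ were proved up front, and property $(f)$ — injectivity and continuity of each $\dindi{\iota}{Q}{i}$ — was proved for $\dindiv{\iota}{Q}{\mbf{G}}{i}$ inside Definition~\ref{deflema} and is preserved under the cone isomorphism. Finally, for $(3)\Rightarrow(1)$ I would reconstruct the universal property by hand exactly as in the $(1)\Rightarrow(2)$ argument but now starting from the abstract $(Q,\dindi{\iota}{Q}{}^{\op})$: properties $(a)$,$(b)$ give conditions $(1)(a)$,$(1)(b)$ of Remark~\ref{oppc} directly, property $(c)$ makes the square of Figure~\ref{topol122} commute so $(Q,\dindi{\iota}{Q}{}^{\op})$ is a cone, and given any other cone $(Q',\subsm{\iota}{Q'}^{\op})$ one defines $\mu(q)=\dindpi{\iota}{Q}{i}(x)$ for $q=\dindi{\iota}{Q}{i}(x)$ (possible by $(d)$ and Remark~\ref{remtop}), with well-definedness using injectivity $(f)$ together with property $(e)$ to reduce an equality $\dindi{\iota}{Q}{i}(x)=\dindi{\iota}{Q}{j}(y)$ back to the relation $\Rel{\mbf{G}}$, and continuity again automatic. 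The main obstacle is the well-definedness of $\mu$ in $(3)\Rightarrow(1)$: unlike the standard representative, the abstract $Q$ has no built-in quotient description, so one must argue that $\dindi{\iota}{Q}{i}(x)=\dindi{\iota}{Q}{j}(y)$ forces the existence of a patching element $u\in\Goij{\mbf{G}}{i}{j}$ with $x=\Gnij{\mbf{G}}{\etaij{i}{j}}^{\op}(u)$, and this is exactly where properties $(e)$ (to locate the common point in the overlap) and $(f)$ (to pull it back uniquely) must be combined carefully.
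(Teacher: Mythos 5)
Your proposal is correct and follows essentially the same route as the paper: the same up-front verification of $(3)(c)$--$(e)$ for the standard representative $(\subsm{Q}{\mathbf{G}},\dindi{\iota}{Q}{\mathbf{G}}^{\op})$, the same terminal-cone argument (forced definition of $\mu$, $\Rel{\mbf{G}}$-chase for well-definedness, final topology for continuity) for $(1)\Rightarrow(2)$, the same transport argument for $(2)\Rightarrow(3)$, and the same reconstruction for $(3)\Rightarrow(1)$. In fact, your explicit identification of how properties $(e)$ and $(f)$ must be combined to establish well-definedness of $\mu$ in $(3)\Rightarrow(1)$ is more careful than the paper, which simply defers to the argument from $(1)\Rightarrow(2)$ at that point.
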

\begin{remark}
\begin{enumerate}	
\item Given a $\topo^{\op}$-gluing data functor $\mathbf{G}$ and $Q$ a topological space, let $\dindi{\iota}{Q}{}^{\op}$ be a family $\subsm{\{\dindi{\iota}{Q}{a}^{\op} \}}{a\!\in\! \Co{\glI{I}}}$ where $\dindi{\iota}{Q}{a}: \Goi{\mbf{G}}{a}\rightarrow Q$ is in $\Cn{\topo}$ for all $a\in \Co{\glI{I}}$. If $Q$ is a glued-up $\topo^{\op}$-object along $\mathbf{G}$ through $\dindi{\iota}{Q}{}^{\op}$, then the topology on $Q$ is the final topology with respect to $\dindi{\iota}{Q}{}^{\op}$. This follows directly from $(Q, \subsm{\iota}{Q}^{\op})\simeq (\subsm{Q}{\mathbf{G}}, \dindi{\iota}{Q}{\mathbf{G}}^{\op})$ proved in Theorem \ref{gluingtop}.

\item With the notation of Theorem \ref{gluingtop}, let $i,j ,k  \in \mathrm{I}$, we have 
\begin{align*}&{\dindiv{\iota}{Q}{\mbf{G}}{i}}\!(\Gnij{\mbf{G}}{\etaij{i}{j}}^{\op}\circ \Gnij{\mbf{G}}{\etaijk{j}{i}{j}{k}}^{\op}(\Goij{\mbf{G}}{i}{j} \subsm{\times}{\Goi{\mbf{G}}{i}} \Goij{\mbf{G}}{i}{k}))\\&= {\dindiv{\iota}{Q}{\mbf{G}}{j}}\!(\Gnij{\mbf{G}}{\etaij{j}{i}}^{\op}\circ  \Gnij{\mbf{G}}{\etaijk{i}{j}{i}{k}}^{\op}(\Goij{\mbf{G}}{j}{i} \subsm{\times}{\Goi{\mbf{G}}{j}}\Goij{\mbf{G}}{j}{k})) 
\\&= {\dindiv{\iota}{Q}{\mbf{G}}{i}}\!(\Gnij{\mbf{G}}{\etaij{i}{j}}^{\op}(\Goij{\mbf{G}}{i}{j}))\cap {\dindiv{\iota}{Q}{\mbf{G}}{i}}\!(\Gnij{\mbf{G}}{\etaij{i}{k}}^{\op}(\Goij{\mbf{G}}{i}{k}))\\&= {\dindiv{\iota}{Q}{\mbf{G}}{j}}\!(\Gnij{\mbf{G}}{\etaij{j}{i}}^{\op}(\Goij{\mbf{G}}{j}{i}))\cap {\dindiv{\iota}{Q}{\mbf{G}}{j}}\!(\Gnij{\mbf{G}}{\etaij{j}{k}}^{\op}(\Goij{\mbf{G}}{j}{k}))
\\&=  {\dindiv{\iota}{Q}{\mbf{G}}{i}}\!( \Goi{\mbf{G}}{i})\cap {\dindiv{\iota}{Q}{\mbf{G}}{j}}\!( \Goi{\mbf{G}}{j})\cap {\dindiv{\iota}{Q}{\mbf{G}}{k}}\!( \Goi{\mbf{G}}{k})
;\end{align*}  
Since $  \Gnij{\mbf{G}}{\tauijk{k}{i}{j}{k}}^{\op} (\Goij{\mbf{G}}{i}{j} \subsm{\times}{\Goi{\mbf{G}}{i}}  \Goij{\mbf{G}}{i}{k})= \Goij{\mbf{G}}{j}{i} \subsm{\times}{\Goi{\mbf{G}}{j}}  \Goij{\mbf{G}}{j}{k}$, then \begin{align*} &{\dindiv{\iota}{Q}{\mbf{G}}{i}}\!(\Gnij{\mbf{G}}{\etaij{i}{j}}^{\op}\circ \Gnij{\mbf{G}}{\etaijk{j}{i}{j}{k}}^{\op}(\Goij{\mbf{G}}{i}{j} \subsm{\times}{\Goi{\mbf{G}}{i}} \Goij{\mbf{G}}{i}{k}))\\&={\dindiv{\iota}{Q}{\mbf{G}}{j}}\!\!\circ \Gnij{\mbf{G}}{\etaij{j}{i}}^{\op} \Gnij{\mbf{G}}{\etaijk{i}{j}{i}{k}}^{\op} \circ \Gnij{\mbf{G}}{\tauijk{k}{i}{j}{k}}^{\op} (\Goij{\mbf{G}}{i}{j} \subsm{\times}{\Goi{\mbf{G}}{i}}  \Goij{\mbf{G}}{i}{k})\\&={\dindiv{\iota}{Q}{\mbf{G}}{j}}\!(\Gnij{\mbf{G}}{\etaij{j}{i}}^{\op}\circ  \Gnij{\mbf{G}}{\etaijk{i}{j}{i}{k}}^{\op}(\Goij{\mbf{G}}{j}{i} \subsm{\times}{\Goi{\mbf{G}}{j}}\Goij{\mbf{G}}{j}{k})).\end{align*} Moreover, we can prove as in the proof of the Theorem \ref{gluingtop} that
\begin{align*}&{\dindiv{\iota}{Q}{\mbf{G}}{i}}\!(\Gnij{\mbf{G}}{\etaij{i}{j}}^{\op}\circ \Gnij{\mbf{G}}{\etaijk{j}{i}{j}{k}}^{\op}(\Goij{\mbf{G}}{i}{j} \subsm{\times}{\Goi{\mbf{G}}{i}} \Goij{\mbf{G}}{i}{k}))\\&={\dindiv{\iota}{Q}{\mbf{G}}{i}}\!(\Gnij{\mbf{G}}{\etaij{i}{j}}^{\op}(\Goij{\mbf{G}}{i}{j}))\cap {\dindiv{\iota}{Q}{\mbf{G}}{i}}\!(\Gnij{\mbf{G}}{\etaij{i}{k}}^{\op}(\Goij{\mbf{G}}{i}{k})).\end{align*}  and that 
\begin{align*}&{\dindiv{\iota}{Q}{\mbf{G}}{j}}\!(\Gnij{\mbf{G}}{\etaij{j}{i}}^{\op}\circ  \Gnij{\mbf{G}}{\etaijk{i}{j}{i}{k}}^{\op}(\Goij{\mbf{G}}{j}{i} \subsm{\times}{\Goi{\mbf{G}}{j}}\Goij{\mbf{G}}{j}{k}))
\\&={\dindiv{\iota}{Q}{\mbf{G}}{j}}\!(\Gnij{\mbf{G}}{\etaij{j}{i}}^{\op}(\Goij{\mbf{G}}{j}{i})) \cap {\dindiv{\iota}{Q}{\mbf{G}}{j}}(\Gnij{\mbf{G}}{\etaij{j}{k}}^{\op}(\Goij{\mbf{G}}{j}{k})).\end{align*}
\end{enumerate} 
\end{remark}

We aim to illustrate the practical application of the processes we have described thus far by providing a straightforward and visually intuitive example. This example will help you grasp the tangible outcomes of our results.
\begin{example} \label{torusglu}
In topology, the concept of gluing allows us to seamlessly transform spaces, creating complex structures. To illustrate this concept, we will demonstrate how gluing can metamorphose basic shapes, such as a square, into more intricate ones, like a cylinder. Eventually, we will proceed to the process of gluing a cylinder into a torus, as visualized in the accompanying figure.\\
	\begin{center}
		\includegraphics[scale=0.3]{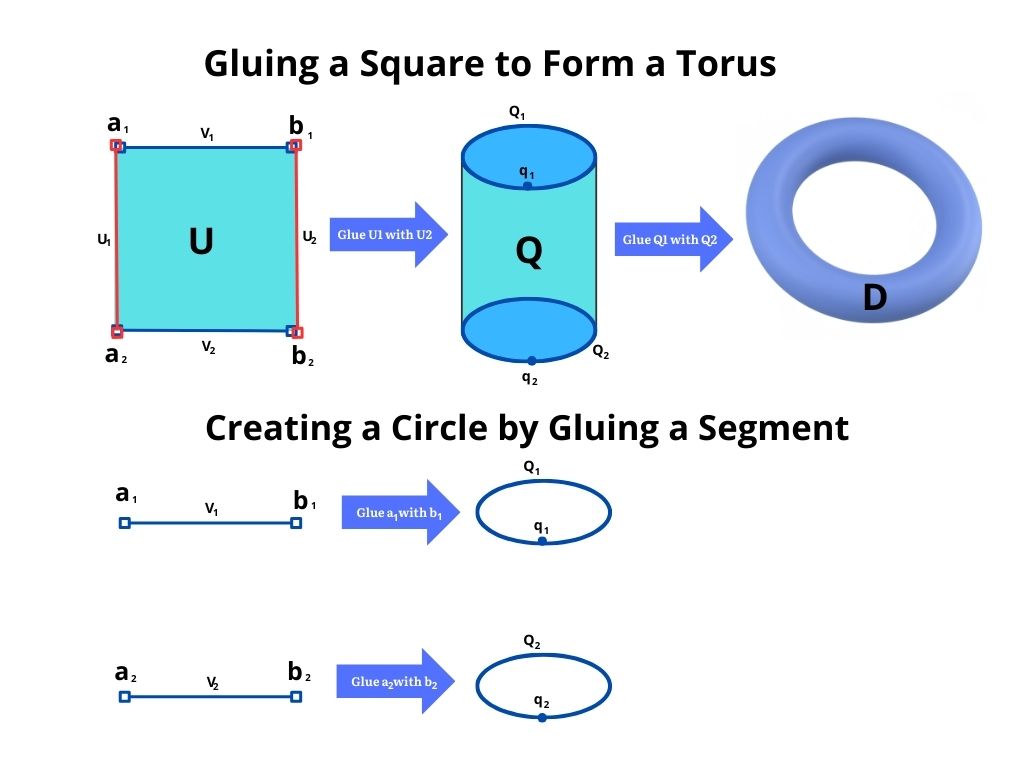}
	\end{center}
\textsf{Gluing Data:} To mathematically represent this gluing operation, we utilize a gluing data functor. Let's dissect the key elements of this process:

\begin{itemize}
  \item We commence with a topological space denoted as $U$, representing our square. This space is divided into four subspaces: $U_1$ and $U_2$, which correspond to opposite sides of the square, and $V_1$ and $V_2$, corresponding to the other pair of opposite sides, as depicted in the illustration. Additionally, we label the vertices of the square within $V_i$ as $a_i$ and $b_i$, where $i\in \{1,2\}$.
  
  \item To facilitate the gluing of the square into a cylinder, we introduce an isomorphism $\phi$ from $U_1$ to $U_2$.
  
  \item For further gluing, we introduce an isomorphism $\psi$ from $V_1$ to $V_2$.
  
  \item We consider another topological space, $Q$, which represents a cylindrical structure and two subspaces: $Q_1$ and $Q_2$, which correspond to the opposite circle on the top and the bottom as depicted in the illustration.
  
  \item To enable the gluing from a cylinder to a torus, we introduce an isomorphism $\theta$ from $Q_1$ to $Q_2$.
  
  \item Finally, we introduce a topological torus, denoted as $D$.
\end{itemize}

This setup forms the foundation for our gluing operations.

	{\sf Glued-Up Object as a Coequalizer:} The glued-up cylinder obtain by gluing $U$ along $U_1$ and $U_2$ can be expressed as a coequalizer of two morphisms:
	\begin{align*}
		&\iota_{U_1, U}: \text{Inclusion of } U_1 \text{ into the entire space } U. \\
		&\iota_{U_2, U} \circ \phi: \text{Inclusion of } U_2 \text{ into } U \text{ through the isomorphism } \phi.
	\end{align*}
	{\sf View as a Pushout} This coequalizer can also be interpreted as a pushout:
	\begin{itemize}
		\item We take the coproduct (disjoint union) of $U$ and $U_1$, denoted as $U \coprod U_1$.
		\item Introduce the canonical projection $\pi_{U \coprod U_1, U_1}$.
		\item The glued-up object becomes a pushout of two morphisms:
		\begin{align*}
			&\iota_{U_1, U} \circ \pi_{U \coprod U_1, U_1}; \\
			&\iota_{U_2, U} \circ \phi \circ \pi_{U \coprod U_2, U_2}.
		\end{align*}
	\end{itemize}
	
	{\sf Gluing data functor:}
	\begin{itemize}
		\item We set $\mathrm{I} = \{1, 2\}$. So that $\Co{\mathbf{Gl}(\mathrm{I})} = \{1, 2, [1, 2], [2, 1]\}$ and $\Cn{\mathbf{Gl}(\mathrm{I})} = \{\etaij{1}{2}, \etaij{2}{1},\tauij{1}{2}, \tauij{2}{1}\}$.\
		\item Now, we specify the gluing data functor to gluing the square into a cylinder $\mathbf{G}: \mathbf{Gl}(\mathrm{I}) \rightarrow \mathbf{Top}^\op$:
		\begin{align*}
			&\Goi{\mbf{G}}{1} = U, \ \Goi{\mbf{G}}{2} = U. \\
			&\Goij{\mbf{G}}{1}{2} = U \coprod U_1, \ \Goij{\mbf{G}}{2}{1} = U \coprod U_2. \\
			&\Gnij{\mbf{G}}{\etaij{1}{2}}^{\op} = \iota_{U_1, U} \circ \pi_{U \coprod U_1, U_1}, \ \Gnij{\mbf{G}}{\etaij{2}{1}}^{\op} = \iota_{U_2, U} \circ \pi_{U \coprod U_2, U_2}. \\
			&\Gnij{\mbf{G}}{\tauij{1}{2}}^{\op} = \operatorname{id} \coprod \phi, \ \Gnij{\mbf{G}}{\tauij{2}{1}}^{\op} = \operatorname{id} \coprod \phi^{-1}.
		\end{align*}
		\item $Q$ is a glued-up object over $\mathbf{G}$.
	\end{itemize}
	{\sf Refinements of $\mathbf{G}$:} 
	\begin{itemize}
		\item We can define two refinements of $\mathbf{G}$ as follows. For $i\in \{1,2\}$, we define the gluing data functor $\subsm{\mathbf{G}}{V_i}: \mathbf{Gl}(\mathrm{I}) \rightarrow \mathbf{Top}^\op$:
		\begin{align*}
			&\Goi{\subsm{\mathbf{G}}{V_i}}{1} = U \coprod V_i, \quad \Goi{\subsm{\mathbf{G}}{V_i}}{2} = U \coprod V_i. \\
			&\Goij{\subsm{\mathbf{G}}{V_i}}{1}{2} = U \coprod U_1 \coprod V_i \coprod \{ a_i\}, \quad \Goij{\subsm{\mathbf{G}}{V_i}}{2}{1} = U \coprod U_2 \coprod V_i \coprod \{ b_i\}. \\
			&\Gnij{\subsm{\mathbf{G}}{V_i}}{\etaij{1}{2}}^{\op} = \iota_{U_1  \coprod \{ a_i\}, U \coprod V_i} \circ \pi_{ U \coprod U_1 \coprod V_i \coprod \{ a_i\}, U_1 \coprod \{ a_i\}}, \\
			&\Gnij{\subsm{\mathbf{G}}{V_i}}{\etaij{2}{1}}^{\op} =  \iota_{U_2  \coprod \{ b_i\}, U \coprod V_i} \circ \pi_{ U \coprod U_2 \coprod V_i \coprod \{ b_i\},U_2  \coprod \{ b_i\}}. \\
			&\Gnij{\subsm{\mathbf{G}}{V_i}}{\tauij{1}{2}}^{\op} = \operatorname{id} \coprod \phi \coprod \operatorname{id} \coprod f_{a_i, b_i}, \\
			&\Gnij{\subsm{\mathbf{G}}{V_i}}{\tauij{2}{1}}^{\op} =\operatorname{id} \coprod \phi^{-1} \coprod \operatorname{id} \coprod f_{a_i, b_i}^{-1}.
		\end{align*}
		Moreover, we have $Q\coprod Q_i$ as a glued-up object over $\subsm{\mathbf{G}}{V_i}$, for all $i\in \{ 1,2\}$.
		
		
		\item We define the refinement map $\subsm{\rho}{\operatorname{id}, \subsm{\mathbf{G}}{V_i}, \mathbf{G}}:\subsm{\mathbf{G}}{V_i} \rightarrow \mathbf{G}$ so that $\Cn{\subsm{\rho}{\operatorname{id}, \subsm{\mathbf{G}}{V_i}, \mathbf{G}}} =\subsm{\subsm{\rho}{\operatorname{id}, \subsm{\mathbf{G}}{V_i}, \mathbf{G}}}{2}=  \pi_{U \coprod V_i, U}$, $\subsm{\subsm{\rho}{\operatorname{id}, \subsm{\mathbf{G}}{V_i}, \mathbf{G}}}{[1,2]}=  \pi_{U \coprod U_1 \coprod V_1 \coprod \{ a_i\}, U \coprod U_1}$, and $\subsm{\subsm{\rho}{\operatorname{id}, \subsm{\mathbf{G}}{V_i}, \mathbf{G}}}{[2,1]}=  \pi_{U \coprod U_2 \coprod V_1 \coprod \{ a_i\}, U \coprod U_2}$. One can prove without difficulty that $\subsm{\rho}{\operatorname{id}, \subsm{\mathbf{G}}{V_i}, \mathbf{G}}$ defines a natural transformation.  
		
		\item We define the refinement map $\subsm{\rho}{\operatorname{id}, \subsm{\mathbf{G}}{V_1}, \subsm{\mathbf{G}}{V_2}}:\subsm{\mathbf{G}}{V_1} \rightarrow \subsm{\mathbf{G}}{V_2}$ so that $\Cn{\subsm{\rho}{\operatorname{id}, \subsm{\mathbf{G}}{V_1}, \subsm{\mathbf{G}}{V_2}}} =\subsm{\subsm{\rho}{\operatorname{id}, \subsm{\mathbf{G}}{V_1}, \subsm{\mathbf{G}}{V_2}}}{2}=  \operatorname{id} \coprod \psi$, $\subsm{\subsm{\rho}{\operatorname{id}, \subsm{\mathbf{G}}{V_1}, \subsm{\mathbf{G}}{V_2}}}{[1,2]}= \operatorname{id} \coprod \operatorname{id} \coprod \psi \coprod f_{a_1, a_2}$, and $\subsm{\subsm{\rho}{\operatorname{id}, \subsm{\mathbf{G}}{V_1}, \subsm{\mathbf{G}}{V_2}}}{[2,1]}=  \operatorname{id} \coprod \operatorname{id} \coprod \psi \coprod f_{b_1, b_2}$. One can prove without difficulty that $\subsm{\rho}{\operatorname{id}, \subsm{\mathbf{G}}{V_1}, \subsm{\mathbf{G}}{V_2}}$ defines a natural correspondence.  
	\end{itemize}
	
	{\sf Combining gluing together into one gluing data functor:}  
	
	\begin{itemize}
		\item Finally, we can define the $\mathbf{Gdf}(\mathbf{Top}^\op)$-gluing data functor as follows. Now, let's specify the gluing index functor $\mathcal{G}: \mathbf{Gl}(\mathrm{I}) \rightarrow \mathbf{Gdf}(\mathbf{Top}^\op)$:
		\begin{align*}
			&\Goi{\mathcal{G}}{1} = \mathbf{G}, \quad \Goi{\mathcal{G}}{2} = \mathbf{G}. \\
			&\Goij{\mathcal{G}}{1}{2} = \subsm{\mathbf{G}}{V_1}, \quad \Goij{\mathcal{G}}{2}{1} = \subsm{\mathbf{G}}{V_2} \\
			&\Gnij{\mathcal{G}}{\etaij{1}{2}}^{\op} = \subsm{\rho}{\operatorname{id}, \subsm{\mathbf{G}}{V_1}, \mathbf{G}}, \quad \Gnij{\mathcal{G}}{\etaij{2}{1}}^{\op} = \subsm{\rho}{\operatorname{id}, \subsm{\mathbf{G}}{V_2}, \mathbf{G}}. \\
			&\Gnij{\mathcal{G}}{\tauij{1}{2}}^{\op} = \subsm{\rho}{\operatorname{id}, \subsm{\mathbf{G}}{V_1}, \subsm{\mathbf{G}}{V_2}}, \\
			&\Gnij{\mathcal{G}}{\tauij{2}{1}}^{\op} = \subsm{\rho}{\operatorname{id}, \subsm{\mathbf{G}}{V_1}, \subsm{\mathbf{G}}{V_2}}^{-1}.
		\end{align*}
		\item We can choose $ \lim \mathcal{G}: \mathbf{Gl}(\mathrm{I}) \rightarrow \mathbf{Top}^\op$ to be defined as follows:
		\begin{align*}
			&\Goi{\lim \mathcal{G}}{1} = Q, \quad \Goi{\lim \mathcal{G}}{2} = Q. \\
			&\Goij{\lim \mathcal{G}}{1}{2} = Q \coprod Q_1, \quad \Goij{\lim \mathcal{G}}{2}{1} = Q \coprod Q_2. \\
			&\Gnij{\lim \mathcal{G}}{\etaij{1}{2}}^{\op} = \iota_{Q_1, Q} \circ \pi_{Q \coprod Q_1, Q_1}, \\
			&\Gnij{\lim \mathcal{G}}{\etaij{2}{1}}^{\op} = \iota_{Q_2, Q} \circ \pi_{Q \coprod Q_2, Q_2}. \\
			&\Gnij{\lim \mathcal{G}}{\tauij{1}{2}}^{\op} = \operatorname{id} \coprod \theta, \quad \Gnij{\lim \mathcal{G}}{\tauij{2}{1}}^{\op} = \operatorname{id} \coprod \theta^{-1}.
		\end{align*} 
		\item $D$ is a glued-up object over $\lim \mathcal{G}$.
	\end{itemize}
	
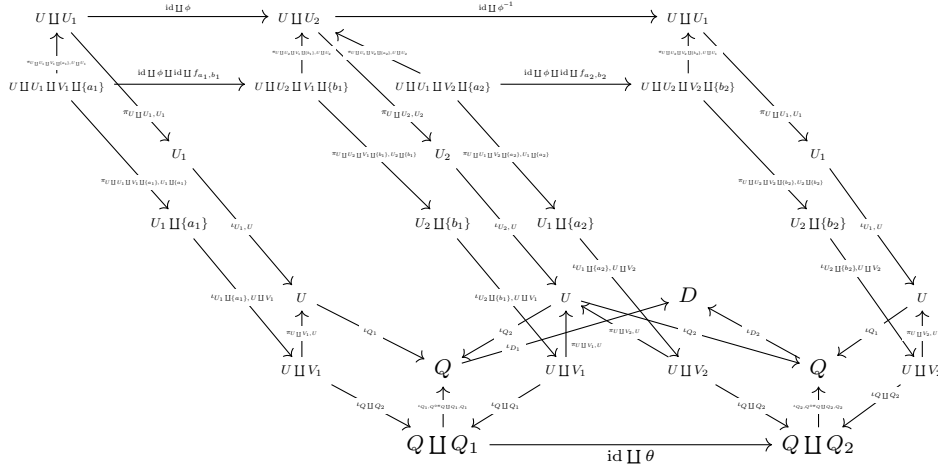
\begin{figure}[H]

{\tiny\begin{tikzcd}[column sep=small]
\scalebox{0.6}{$U\coprod U_1$} \arrow{rr}{\scalebox{0.5}{$\operatorname{id}\coprod \phi$}} \arrow[labels=description, near end]{rdd}{ \scalebox{0.5}{${\subsm{\pi}{U\coprod U_1,U_1}}$}}                                                                                                                 &                                                                                 & \scalebox{0.6}{$U\coprod U_2$ }\arrow[near end, labels=description]{rdd}{\scalebox{0.5}{${\subsm{\pi}{U\coprod U_2,U_2}}$}} \arrow{rrr}{\scalebox{0.5}{$\operatorname{id}\coprod \phi^{-1}$}}                     &                                                                                                                                                                                                                  &                                                                                & \scalebox{0.6}{$U\coprod U_1$} \arrow[near end,labels=description]{rdd}{\scalebox{0.5}{${\subsm{\pi}{U\coprod U_1,U_1}}$}}                                                                       &                                                                                 &                                   \\
\scalebox{0.5}{$U\coprod U_1\coprod V_1\coprod \{a_1\}$} \arrow[labels=description, near start]{u}{\scalebox{0.3}{$\subsm{\pi}{U \coprod U_1 \coprod V_1 \coprod \{ a_1\}, U \coprod U_1}$}} \arrow{rr}{\scalebox{0.5}{${\operatorname{id}\coprod \phi\coprod \operatorname{id}\coprod f_{a_1,b_1}}$}}\arrow[swap,near end, labels=description]{rdd}{ \scalebox{0.4}{${\subsm{\pi}{U\coprod U_1\coprod V_1\coprod \{a_1\},U_1\coprod \{a_1\}}}$}} &                                                                                 & \scalebox{0.5}{$U\coprod U_2\coprod V_1\coprod \{b_1\} \arrow[labels=description]{u}{\scalebox{0.3}{$\subsm{\pi}{U \coprod U_2 \coprod V_1 \coprod \{ b_1\}, U \coprod U_2}$}}$} \arrow[swap,labels=description]{rdd}{\scalebox{0.4}{${\subsm{\pi}{U\coprod U_2\coprod V_1\coprod \{b_1\},U_2\coprod \{b_1\}}}$}} & \scalebox{0.5}{$U\coprod U_1\coprod V_2\coprod \{a_2\}$} \arrow{rr}{\scalebox{0.5}{${\operatorname{id}\coprod \phi\coprod \operatorname{id}\coprod f_{a_2,b_2}}$}}\arrow[swap, labels=description]{lu}{\scalebox{0.3}{$\subsm{\pi}{U \coprod U_1 \coprod V_2 \coprod \{ a_2\}, U \coprod U_2}$}} \arrow[labels=description]{rdd}{\scalebox{0.4}{${\subsm{\pi}{U\coprod U_1\coprod V_2\coprod \{a_2\},U_1\coprod \{a_2\}}}$}} &                                                                                & \scalebox{0.5}{$U\coprod U_2\coprod V_2\coprod \{b_2\}$} \arrow[labels=description]{u}{\scalebox{0.3}{$\subsm{\pi}{U \coprod U_2 \coprod V_2 \coprod \{ b_2\}, U \coprod U_1}$}} \arrow[near end,swap, labels=description]{rdd}{\scalebox{0.4}{${\subsm{\pi}{U\coprod U_2\coprod V_2\coprod \{b_2\},U_2\coprod \{b_2\}}}$}} &                                                                                 &                                   \\
                                                                                                                                                                                                                  & \scalebox{0.6}{$U_1$} \arrow[labels=description]{rdd}{\scalebox{0.5}{${\subsm{\iota}{U_1,U}}$}}                                       &                                                                                                                            &  \scalebox{0.6}{$U_2$} \arrow[labels=description]{rdd}{ \scalebox{0.5}{${\subsm{\iota}{U_2,U}}$}}                                                                                                                                                                       &                                                                                &                     &  \scalebox{0.6}{$U_1$} \arrow[labels=description]{rdd}{ \scalebox{0.5}{${\subsm{\iota}{U_1,U}}$}}                                       &                                   \\
                                                                                                                                                                                                                  &  \scalebox{0.6}{$U_1\coprod \{a_1\}$} \arrow[swap ,labels=description]{rdd}{ \scalebox{0.5}{${\subsm{\iota}{U_1\coprod \{a_1\},U\coprod V_1}}$}} &                                                                                                                            & \scalebox{0.6}{$U_2\coprod \{b_1\}$} \arrow[swap,labels=description]{rdd}{\scalebox{0.5}{${\subsm{\iota}{U_2\coprod \{b_1\},U\coprod V_1}}$}}                                                                                                                                 & \scalebox{0.6}{$U_1\coprod \{a_2\}$} \arrow[near start,labels=description]{rdd}{\scalebox{0.5}{${\subsm{\iota}{U_1\coprod \{a_2\},U\coprod V_2}}$}}&                                                                                                                            & \scalebox{0.6}{$U_2\coprod \{b_2\}$} \arrow[swap, near start, labels=description]{rdd}{\scalebox{0.5}{${\subsm{\iota}{U_2\coprod \{b_2\} \! ,\!U\coprod V_2} }$}} &                                   \\
                                                                                                                                                                                                                  &                                                                                 & \scalebox{0.6}{$U$} \arrow[labels=description]{rd}{\scalebox{0.5}{$\subsm{\iota}{Q_1}$}}                                                                                                              &                                                                                                                                                                                                                  & \scalebox{0.6}{$U$} \arrow[swap,labels=description]{ld}{\scalebox{0.5}{$\subsm{\iota}{Q_2}$}}\arrow[labels=description]{rrd}{\scalebox{0.5}{$\subsm{\iota}{Q_2}$}}                                                       & D                                                                                                                          &                                                                                 & \scalebox{0.6}{$U$} \arrow[swap,labels=description]{ld}{\scalebox{0.5}{$\subsm{\iota}{Q_1}$}}                     \\
                                                                                                                                                                                                                  &                                                                                 & \scalebox{0.6}{$U\coprod V_1$} \arrow[swap,labels=description]{u}{\scalebox{0.4}{$\subsm{\pi}{U \coprod V_1, U}$}} \arrow[swap,labels=description]{rd}{\scalebox{0.5}{$\subsm{\iota}{Q\coprod Q_2}$}}                                                                                          & Q \arrow[labels=description,near start]{rru}{\scalebox{0.5}{$\subsm{\iota}{ D_1}$}}                                                                                                                                                                                                  & \scalebox{0.6}{$U\coprod V_1$} \arrow[labels=description]{ld}{\scalebox{0.5}{$\subsm{\iota}{Q\coprod Q_1}$}} \arrow[swap,near start]{u}{\scalebox{0.4}{$\subsm{\pi}{U \coprod V_1, U}$}}                                              & \scalebox{0.6}{$U\coprod V_2$} \arrow[swap, labels=description]{rd}{\scalebox{0.5}{$\subsm{\iota}{Q\coprod Q_2}$}} \arrow[labels=description]{lu}{\scalebox{0.4}{$\subsm{\pi}{U \coprod V_2, U}$}}                                                                                       & Q \arrow[labels=description]{lu}{\scalebox{0.5}{$\subsm{\iota}{ D_2}$}}                                                                    & \scalebox{0.6}{$U\coprod V_2$} \arrow[swap,labels=description]{u}{\scalebox{0.4}{$\subsm{\pi}{U \coprod V_2, U}$}} \arrow[labels=description,near start]{ld}{\scalebox{0.5}{$\subsm{\iota}{Q\coprod Q_2}$}} \\
                                                                                                                                                                                                                  &                                                                                 &                                                                                                                            & Q\coprod Q_1 \arrow[labels=description]{u}{\scalebox{0.3}{$\iota_{Q_1, Q} \circ \pi_{Q \coprod Q_1, Q_1}$}} \arrow[swap]{rrr}{\operatorname{id}\coprod \theta}                                                                                                                                                                           &                                                                                &                                                                                                                            & Q\coprod Q_2 \arrow[labels=description]{u}{\scalebox{0.3}{$\iota_{Q_2, Q} \circ \pi_{Q \coprod Q_2, Q_2}$}}                                                          &                                  
\end{tikzcd}}

\caption{Representation of $\mathcal{G}$, where the top diagrams each represent $\mathbf{G}$, the bottom-left diagram represents $\subsm{\mathbf{G}}{V_1}$, the bottom-right diagram represents $\subsm{\mathbf{G}}{V_2}$, and the upward-pointing arrows denote the refinement maps.\\ }
\end{figure}
	
\end{example}

The usual gluing data for topological spaces, as discussed in \cite[Proposition $12.27$]{wedhorn2016}, aligns closer with the concept of a gluing data functor expressed in the following manner. 
\begin{lemma}
\label{otop}
Let $\mathbf{G}$ be an $\mathbf{oTop}^{\op}$-gluing data functor, where $\mathbf{oTop}$ is the subcategory of $\topo$ whose objects are topological spaces, and morphisms are open continuous maps. Then, for all $i \in \mathrm{I}$:
\begin{enumerate} 
\item ${\dindiv{\iota}{Q}{\mbf{G}}{i}}$ is a topological embedding;
\item $\dindi{\iota}{Q}{i}\!(\Goi{\mbf{G}}{i})$ is an open topological subspace of $\subsm{Q}{\mathbf{G}}$;
\item $\subsm{Q}{\mathbf{G}} = \subsm{\bigcup}{i\! \in\! \mathrm{I}} \dindi{\iota}{Q}{i}\!(\Goi{\mbf{G}}{i})$;
\item $\subsm{Q}{\mathbf{G}}$ is a glued-up $\mathbf{oTop}^{\op}$-object along $\mathbf{G}$ through $\dindi{\iota}{Q}{\mathbf{G}}^{\op}$.
\end{enumerate} 
\end{lemma}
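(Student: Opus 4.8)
The plan is to leverage the already-established Theorem \ref{gluingtop}, which gives a concrete standard representative $(\subsm{Q}{\mathbf{G}}, \dindi{\iota}{Q}{\mathbf{G}}^{\op})$ of the limit, and to show that the extra hypothesis on morphisms being \emph{open} upgrades the one-to-one continuous maps $\dindiv{\iota}{Q}{\mbf{G}}{i}$ to topological embeddings with open image. First I would recall from Definition \ref{deflema} that $\subsm{Q}{\mathbf{G}}$ carries the final topology with respect to the family $\subsm{\{\dindiv{\iota}{Q}{\mbf{G}}{i}\}}{i\in\mathrm{I}}$ and that each $\dindiv{\iota}{Q}{\mbf{G}}{i}$ is already known (from the proof of Definition \ref{deflema}) to be one-to-one and continuous. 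So the crux of items (1) and (2) is a single open-map statement: for every $i\in\mathrm{I}$ and every open $W\subsm{\subseteq}{\operatorname{op}}\Goi{\mbf{G}}{i}$, the set $\dindiv{\iota}{Q}{\mbf{G}}{i}(W)$ is open in $\subsm{Q}{\mathbf{G}}$.

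To prove that, by definition of the final topology it suffices to show that for every $j\in\mathrm{I}$ the preimage $\dindiv{\iota}{Q}{\mbf{G}}{j}^{-1}\big(\dindiv{\iota}{Q}{\mbf{G}}{i}(W)\big)$ is open in $\Goi{\mbf{G}}{j}$. I would unwind this preimage using the description of $\Rel{\mbf{G}}$: a point $y\in\Goi{\mbf{G}}{j}$ lies in it precisely when $(y,j)\Rel{\mbf{G}}(x,i)$ for some $x\in W$, i.e.\ (using property $(3)(e)$ of Theorem \ref{gluingtop} and the pullback/pushout description of $\Goijk{\mbf{G}}{j}{i}{k}$-type objects) when $y$ lies in the image under $\Gnij{\mbf{G}}{\etaij{j}{i}}^{\op}$ of the subset of $\Goij{\mbf{G}}{j}{i}$ cut out by $\Gnij{\mbf{G}}{\tauij{i}{j}}^{\op}$-preimage of $(\Gnij{\mbf{G}}{\etaij{i}{j}}^{\op})^{-1}(W)$. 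Concretely, $\dindiv{\iota}{Q}{\mbf{G}}{j}^{-1}(\dindiv{\iota}{Q}{\mbf{G}}{i}(W)) = \Gnij{\mbf{G}}{\etaij{j}{i}}^{\op}\Big(\big(\Gnij{\mbf{G}}{\tauij{i}{j}}^{\op}\big)^{-1}\big((\Gnij{\mbf{G}}{\etaij{i}{j}}^{\op})^{-1}(W)\big)\Big)$. Now $(\Gnij{\mbf{G}}{\etaij{i}{j}}^{\op})^{-1}(W)$ is open by continuity, its preimage under $\Gnij{\mbf{G}}{\tauij{i}{j}}^{\op}$ is open by continuity, and its image under $\Gnij{\mbf{G}}{\etaij{j}{i}}^{\op}$ is open precisely because $\mathbf{G}$ takes values in $\mathbf{oTop}^{\op}$, so $\Gnij{\mbf{G}}{\etaij{j}{i}}^{\op}$ is an open map. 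This gives openness of the preimage for every $j$, hence $\dindiv{\iota}{Q}{\mbf{G}}{i}(W)$ is open; taking $W = \Goi{\mbf{G}}{i}$ yields (2), and combined with injectivity and continuity this yields that $\dindiv{\iota}{Q}{\mbf{G}}{i}$ is an embedding, giving (1).

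For (3) I would simply invoke property $(3)(d)$ of Theorem \ref{gluingtop}, which was proved there for the standard representative: $\subsm{Q}{\mathbf{G}} = \subsm{\cup}{i\in\mathrm{I}}\dindiv{\iota}{Q}{\mbf{G}}{i}(\Goi{\mbf{G}}{i})$. Finally, for (4) I would argue that $(\subsm{Q}{\mathbf{G}},\dindi{\iota}{Q}{\mathbf{G}}^{\op})$, which is a limit over $\mathbf{G}$ in $\topo^{\op}$ by Theorem \ref{gluingtop}, also serves as a limit over $\mathbf{G}$ in $\mathbf{oTop}^{\op}$: the structure maps $\dindiv{\iota}{Q}{\mbf{G}}{a}$ are open (by (2), since each is the inclusion of an open subspace, or a composite of such with an open map), hence are morphisms of $\mathbf{oTop}^{\op}$; and given any competing cone in $\mathbf{oTop}^{\op}$, it is in particular a cone in $\topo^{\op}$, so the universal map $\mu$ provided by Theorem \ref{gluingtop} exists and is unique — it only remains to check $\mu$ is open, which follows since on the open cover $\{\dindiv{\iota}{Q}{\mbf{G}}{i}(\Goi{\mbf{G}}{i})\}$ it agrees with $\dindpi{\iota}{Q}{i}\circ(\dindiv{\iota}{Q}{\mbf{G}}{i})^{-1}$, a composite of open maps. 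I expect the main obstacle to be the careful bookkeeping in the preimage computation of step (2) — making sure the set-theoretic identity relating $\Rel{\mbf{G}}$-saturations to images under $\Gnij{\mbf{G}}{\etaij{j}{i}}^{\op}$ is stated and justified cleanly, rather than any deep difficulty — everything else is a direct appeal to openness of the transition maps and to Theorem \ref{gluingtop}.
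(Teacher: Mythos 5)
Your proposal is correct and follows essentially the same route as the paper: the key step for (1)--(2) is exactly the paper's identity $\dindiv{\iota}{Q}{\mbf{G}}{j}^{-1}\bigl(\dindiv{\iota}{Q}{\mbf{G}}{i}(W)\bigr)=\Gnij{\mbf{G}}{\etaij{j}{i}}^{\op}\bigl(\Gnij{\mbf{G}}{\tauij{i}{j}}^{\op}\bigl((\Gnij{\mbf{G}}{\etaij{i}{j}}^{\op})^{-1}(W)\bigr)\bigr)$ combined with openness of the transition maps, (3) is quoted from Theorem \ref{gluingtop}, and (4) reuses the universal map $\mu$ from that theorem and checks openness locally on the open cover, just as the paper does. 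The only blemish is a direction slip: since $\Gnij{\mbf{G}}{\tauij{i}{j}}^{\op}$ maps $\Goij{\mbf{G}}{i}{j}$ to $\Goij{\mbf{G}}{j}{i}$, you should take its \emph{image} of $(\Gnij{\mbf{G}}{\etaij{i}{j}}^{\op})^{-1}(W)$ rather than its preimage, but as this map is a homeomorphism the openness conclusion is unaffected.
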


\begin{proof}
Since $\mathbf{G}$ be an $\mathbf{oTop}^{\op}$-gluing data functor,  $\Gnij{\mbf{G}}{\etaij{i}{j}}$ is an open continuous map.
\begin{enumerate} 
\item We have already proven in Definition (Lemma) \ref{deflema} that $\dindi{\iota}{Q}{i}$ is a one-to-one continuous map. Now, let us prove that ${\dindiv{\iota}{Q}{\mbf{G}}{i}}$ is an open map. Consider $V\subsm{\subseteq}{\operatorname{op}} \Goi{\mbf{G}}{i}$. Our goal is to show that ${\dindiv{\iota}{Q}{\mbf{G}}{i}}\!\!(V)\subsm{\subseteq}{\operatorname{op}} \subsm{Q}{\mathbf{G}}$. In other words, we need to prove that ${\dindiv{\iota}{Q}{\mbf{G}}{j}^{-1}}\!({\dindiv{\iota}{Q}{\mbf{G}}{i}}\!\!(V))\subsm{\subseteq}{\operatorname{op}}\Goi{\mbf{G}}{j}$ for all $j\in \mathrm{I}$, considering the definition of the final topology on $\subsm{Q}{\mathbf{G}}$ with respect to $\dindi{\iota}{Q}{\mathbf{G}}$. Let $i,j \in \mathrm{I}$. To establish this, we prove that ${\dindiv{\iota}{Q}{\mbf{G}}{j}^{-1}}\!({\dindiv{\iota}{Q}{\mbf{G}}{i}}\!\!(V))= {\Gnij{\mbf{G}}{\etaij{j}{i}}^{\op}}(\Gnij{\mbf{G}}{\tauij{i}{j}}^{\op}( {\Gnij{\mbf{G}}{\etaij{i}{j}}^{\op}}^{-1} (V)))$. We have
	\begin{align*}
		&y\in {\Gnij{\mbf{G}}{\etaij{j}{i}}^{\op}}(\Gnij{\mbf{G}}{\tauij{i}{j}}^{\op}( {\Gnij{\mbf{G}}{\etaij{i}{j}}^{\op}}^{-1} (V)))  \\&\Leftrightarrow y={\Gnij{\mbf{G}}{\etaij{j}{i}}^{\op}}(\Gnij{\mbf{G}}{\tauij{i}{j}}^{\op}( z))\; \text{for some $z \in {\Gnij{\mbf{G}}{\etaij{i}{j}}^{\op}}^{-1} (V)$}  
		\\& \Leftrightarrow  \pi(y,j)=\pi(x,i)\; \text{for some $x\in V$ (take $x={{\Gnij{\mbf{G}}{\etaij{i}{j}}^{\op}}} (z)$)}  \\& \Leftrightarrow y\in {\dindiv{\iota}{Q}{\mbf{G}}{j}^{-1}}\!({\dindiv{\iota}{Q}{\mbf{G}}{i}}\!\!(V)), \; \text{since}\; \pi(x,i)={\dindiv{\iota}{Q}{\mbf{G}}{i}}\!\!(x)\; \\& \quad \quad \text{and}\; \pi(y,j)={\dindiv{\iota}{Q}{\mbf{G}}{j}}\!\!(y)\; \text{for some $x\in V $}.
	\end{align*}
	Since ${\Gnij{\mbf{G}}{\etaij{i}{j}}^{\op}}$ is continuous, ${\Gnij{\mbf{G}}{\etaij{i}{j}}^{\op}}^{-1} (V)$ is open. Given that $\Gnij{\mbf{G}}{\tauij{i}{j}}^{\op}$ is a homeomorphism, it follows that $\Gnij{\mbf{G}}{\tauij{i}{j}}^{\op}({\Gnij{\mbf{G}}{\etaij{i}{j}}^{\op}}^{-1}(V))$ is also open. Since ${\Gnij{\mbf{G}}{\etaij{j}{i}}^{\op}}$ is a morphism in $\Cn{\mathbf{oTop}}$, $ {\Gnij{\mbf{G}}{\etaij{j}{i}}^{\op}}(\Gnij{\mbf{G}}{\tauij{i}{j}}^{\op}( {\Gnij{\mbf{G}}{\etaij{i}{j}}^{\op}}^{-1} (V)))$ is open. Thus, successfully demonstrating that $\dindi{\iota}{Q}{i}$ is a topological embedding.
	\item[(2),(3)] We prove that $\subsm{\{ {\dindiv{\iota}{Q}{\mbf{G}}{i}}\!\!(\Goi{\mbf{G}}{i})\}}{i\! \in\! \mathrm{I}}$ is an open covering of $\subsm{Q}{\mathbf{G}}$. We already know by Theorem \ref{gluingtop}, that $\subsm{\{ {\dindiv{\iota}{Q}{\mbf{G}}{i}}\!\!(\Goi{\mbf{G}}{i})\}}{i\! \in\! \mathrm{I}}$ is a covering of $\subsm{Q}{\mathbf{G}}$. Let $i\in \mathrm{I}$. Since ${\dindiv{\iota}{Q}{\mbf{G}}{i}}$ is a topological embedding by $(1)$, ${\dindiv{\iota}{Q}{\mbf{G}}{i}}\!\!(\Goi{\mbf{G}}{i})$ is an open of $\subsm{Q}{\mathbf{G}}$, this proves the result.
	\item[(4)] We can proceed exactly as in the proof of Theorem \ref{gluingtop} $(1) \Rightarrow (2)$. Since we have already proven $(1)$, we only need to show that the map $\mu$ constructed in the proof of Theorem \ref{gluingtop} is also an open map.

Let $V \subseteq \subsm{Q}{\mathbf{G}}^\mathrm{op}$ be given. We have $V = \subsm{\cup}{i \in \mathrm{I}}(V \cap {\dindiv{\iota}{Q}{\mbf{G}}{i}}\!\!(\Goi{\mbf{G}}{i}))$, since $\subsm{Q}{\mbf{G}} = \subsm{\cup}{i \in \mathrm{I}}{\dindiv{\iota}{Q}{\mbf{G}}{i}}\!\!(\Goi{\mbf{G}}{i})$, and ${\dindiv{\iota}{Q}{\mbf{G}}{i}}\!\!(\Goi{\mbf{G}}{i})$ is open in $\subsm{Q}{\mathbf{G}}$ for all $i \in \mathrm{I}$, as proven above.

We want to prove that $\mu(V) = \subsm{\cup}{i \in \mathrm{I}} \mu(V \cap {\dindiv{\iota}{Q}{\mbf{G}}{i}}\!\!(\Goi{\mbf{G}}{i})) \subseteq Q'$. Let $i \in \mathrm{I}$. We have $\mu(V \cap {\dindiv{\iota}{Q}{\mbf{G}}{i}}\!\!(\Goi{\mbf{G}}{i})) = \dindpi{\iota}{{Q}}{i}({\dindiv{\iota}{Q}{\mbf{G}}{i}^{-1}}\!(V \cap {\dindiv{\iota}{Q}{\mbf{G}}{i}}\!\!(\Goi{\mbf{G}}{i})))$ by commutativity of the diagram in Figure \ref{topol14} and $\dindiv{\iota}{Q}{\mbf{G}}{i}$ is a surjective map onto $V \cap \dindiv{\iota}{Q}{\mbf{G}}{i}$.

Now, $V \cap {\dindiv{\iota}{Q}{\mbf{G}}{i}}\!\!(\Goi{\mbf{G}}{i})$ is open in $\subsm{Q}{\mathbf{G}}$ as an intersection of two opens. Moreover, ${\dindiv{\iota}{Q}{\mbf{G}}{i}^{-1}}\!(V \cap {\dindiv{\iota}{Q}{\mbf{G}}{i}}\!\!(\Goi{\mbf{G}}{i}))$ is open as ${\dindiv{\iota}{Q}{\mbf{G}}{i}}$ is continuous. Finally, $\mu(V \cap {\dindiv{\iota}{Q}{\mbf{G}}{i}}\!\!(\Goi{\mbf{G}}{i}))$ is open, since $\dindpi{\iota}{{Q}}{i}$ is an open map as it is in $\Cn{\mathbf{oTop}}$.

	\end{enumerate}
\end{proof}

\subsection{Grothendieck topology on certain topological space categories}
Based on Theorem \ref{gluingtop} and Lemma \ref{otop}, we can derive a natural concept of covering and open coverings. In the following result, we will elucidate how this notion is derived from gluing data functors and how it establishes a Grothendieck topology on the category of Grothendieck sites.
\begin{definition}
Let $U$ be a topological space, and let $i\in \mathrm{I}$. We define a \textbf{\textit{gluing (open) covering}} of $U$ as a family $\subsm{\{\iota_i: \subsm{U}{i}\rightarrow U\}}{i\!\in\! \mathrm{I}}$ satisfying the following conditions:
\begin{enumerate}
\item $\iota_i$ is a one-to-one (open) continuous map.
\item $\subsm{\cup}{i\!\in\!\mathrm{I}}\subsm{\iota}{i}(\subsm{U}{i})=U$.
\end{enumerate}
\end{definition}

We will demonstrate that we can establish a correspondence between the various types of gluing coverings and the gluing data functors within an associated category.
\begin{definition}
Let $U$ be a topological space, and $\subsm{\{\iota_i: U_i \rightarrow U\}}{i\!\in \!\mathrm{I}}$ be a gluing (open) covering of $U$.
\begin{enumerate}
\item We say that $\mathbf{G}$ is a $\mathbf{(o)Top}$-gluing data functor associated with $\subsm{\{\iota_i: U_i \rightarrow U\}}{i\!\in\! \mathrm{I}}$ if $\Goi{\mathbf{G}}{i} = U_i$ for all $i\in \mathrm{I}$ and $U$ is the glued object over $\mathbf{G}$ through $\iota$, where $\iota$ is the family $\subsm{\{\iota_a\}}{a\!\in \!\Co{\glI{I}}}$ such that for all $i,j,k\in \mathrm{I}$
\begin{itemize}
\item $\iota_{[i,j]} = \iota_i \circ \Gnij{\mathbf{G}}{\etaij{i}{j}}^{\op}$;
\item $\iota_{[i,j,k]} = \iota_{[i,j]} \circ \Gnij{\mathbf{G}}{\etaijk{j}{i}{j}{k}}^{\op}$.
\end{itemize}
We refer to $\iota$ as the $\glI{I}$-\textbf{\textit{family of maps associated with}} $\subsm{\{\iota_i: U_i \rightarrow U\}}{i\!\in\! \mathrm{I}}$.
\item We say that $(\mathrm{I},\subsm{\{U_i\}}{i\!\in\! \mathrm{I}}, \subsm{\{\upsilon_{j,i}, \varphi_{i,j}, \subsm{\subsm{\varphi}{k}}{(i,j)}\}}{(i,j,k)\!\in\! \mathrm{I}^3})$ is a \textbf{\textit{topological space gluing data (where all the maps are open maps) associated with}} $\subsm{\{\iota_i: U_i \rightarrow U\}}{i\!\in\! \mathrm{I}}$ if for all $i,j\in \mathrm{I}$
\begin{itemize}
\item $U = \subsm{\bigcup}{i\!\in\! \mathrm{I}} \iota_i(U_i)$;
\item $\iota_j(\upsilon_{j,i}(U_{j,i})) = \iota_i(\upsilon_{i,j}(U_{i,j})) = \iota_i(U_i) \cap \iota_j(U_j)$.
\end{itemize}
\end{enumerate}
\end{definition}

\begin{lemma} \label{coverings}
Let $U$ be a topological space. The following assertions are equivalent:
\begin{enumerate}
\item $\subsm{\{\iota_i: U_i \rightarrow U\}}{i\!\in\! \mathrm{I}}$ is a gluing (open) covering of $U$,
\item there is a $\mathbf{(o)Top}$-gluing data functor associated to $\subsm{\{\iota_i: U_i \rightarrow U\}}{i\!\in\! \mathrm{I}}$ such that $U$ is the glued-up object along $\mathbf{G}$ through the $\glI{I}$-family of maps associated with $\subsm{\{\iota_i: U_i \rightarrow U\}}{i\!\in\! \mathrm{I}}$,
\item there is a topological space gluing data $\big(\mathrm{I},\subsm{\{U_i\}}{i\!\in\! \mathrm{I}}, \subsm{\{\upsilon_{i,j}, \varphi_{i,j}, \subsm{\subsm{\varphi}{k}}{(i,j)}\}}{(i,j,k)\!\in\! \mathrm{I}^3}\big)$ (where all the maps are open) associated with $\{\iota_i: U_i \rightarrow U\}_{i\in \mathrm{I}}$ such that
\begin{itemize}
\item $U=\subsm{\bigcup}{i\!\in\! \mathrm{I}} \iota_i(U_i)$;
\item $\iota_j(\upsilon_{i,j}(U_{i,j}))=\iota_i(\upsilon_{j,i}(U_{j,i}))=\iota_i(U_i)\cap \iota_j(U_j)$
for all $i,j\in \mathrm{I}$.
\end{itemize}
\end{enumerate}
\end{lemma}

\begin{proof}
\begin{enumerate}
\item[$(2)\Leftrightarrow (3)$] Follows from Lemma \ref{equi}.
\item[$(3)\Rightarrow (1)$] Follows from Theorem \ref{gluingtop}.
\item[$(1)\Rightarrow (3)$] Let $i,j,k \in \mathrm{I}$. We set $\Uij{i}{j} = U_i \times_U U_j$ and $\subsm{U}{i \wedge j \wedge k}:= U_i \times_U U_j\times_U U_k$. We prove that
$$\big(\mathrm{I},\subsm{\{\subsm{U}{i}\}}{i\!\in \! \mathrm{I}}, \subsm{\{\Uij{i}{j}, \iuv{\Uij{i}{j}}{\subsm{U}{i}} \}}{(i,j)\! \in\! \mathrm{I}^2},\subsm{\{\subsm{\phi}{i,j}, \subsm{\phi_{k}}{(i,j)}\}}{(i,j,k)\!\in\! \mathrm{I}^3}\big)$$ is a topological space gluing data associated with the covering $\subsm{\{\iota_i: \subsm{U}{i}\rightarrow U\}}{i\!\in \! \mathrm{I}}$ where $\subsm{\phi}{i,j}:\Uij{i}{j}\rightarrow \Uij{j}{i}$ and $\subsm{\phi_{k}}{(i,j)}:\subsm{U}{i \wedge j \wedge k} \rightarrow \subsm{U}{j \wedge i \wedge k}$ are canonical isomorphisms between these pullbacks. We choose $\Uij{i}{i}$ to be $\subsm{U}{i}$ so that  $\subsm{\phi}{i,i}=\subsm{\operatorname{id}}{i}$, Definition \ref{tpsgd} $c)$ and $d)$ follow trivially from these pullback morphisms' property. We have $U=\subsm{\cup}{i\!\in\! \mathrm{I}}\subsm{\iota}{i}(\subsm{U}{i})$ by definition of a gluing covering. Finally, for all $i,j\in \mathrm{I}$, we prove that 
$$\subsm{\iota}{i}(\iuv{\Uij{i}{j}}{\subsm{U}{i}}(\subsm{U}{i,j}))=\subsm{\iota}{j}(\iuv{\Uij{j}{i}}{\subsm{U}{j}} (\subsm{U}{j,i}))=\subsm{\iota}{i}(\subsm{U}{i})\cap \subsm{\iota}{j}(\subsm{U}{j}).$$
Let $i,j \in \mathrm{I}$, and $y\in \subsm{\iota}{i}(\iuv{\Uij{i}{j}}{\subsm{U}{i}}(\subsm{U}{i,j}))$. Then there exist $(u,v) \in \Uij{i}{j}$, such that $y = \subsm{\iota}{i}(\iuv{\Uij{i}{j}}{\subsm{U}{i}}(u,v))= \subsm{\iota}{i}(u)$. Since $(u,v) \in \Uij{i}{j}$, we have $\iota_i (u) = \iota_j (v)$, $u \in U_i$ and $v\in U_j$. Thus $y = \subsm{\iota}{j}(\iuv{\Uij{j}{i}}{\subsm{U}{j}}(v,u))= \iota_j (v)$. That is $y \in \subsm{\iota}{j}(\iuv{\Uij{j}{i}}{\subsm{U}{j}} (\subsm{U}{j,i}))$. The reverse inclusion is proven similarly by exchanging the roles of $i$ and $j$. From the above, we also obtain that if $y\in \subsm{\iota}{i}(\iuv{\Uij{i}{j}}{\subsm{U}{i}}(\subsm{U}{i,j}))$, then $y \in \subsm{\iota}{i}(\subsm{U}{i})\cap \subsm{\iota}{j}(\subsm{U}{j})$. For the reverse inclusion, let $y \in \subsm{\iota}{i}(\subsm{U}{i})\cap \subsm{\iota}{j}(\subsm{U}{j})$, then there is $u \in U_i$ and $v\in U_j$ such that $y=\iota_i (u) = \iota_j (v)$. Then $(u,v) \in \Uij{j}{i}$. Therefore, $y= \subsm{\iota}{i}(\iuv{\Uij{i}{j}}{\subsm{U}{i}}(u,v))\in \subsm{\iota}{i}(\iuv{\Uij{i}{j}}{\subsm{U}{i}}(\subsm{U}{i,j}))$. Finally, all the pullback maps are open by definition of the topology on a pullback.
\end{enumerate}
\end{proof}

The following Definition (Lemma) directly arises from the proofs of Lemma \ref{coverings}, Lemma \ref{equi}, and Theorem \ref{gluingtop}. It provides a canonical gluing data functor associated with a gluing covering.
\begin{definition}[Lemma] \label{gluingcover}
Let $U$ be a topological space, and $\mathcal{U}:=\{\iota_i: U_i \rightarrow U\}_{i\in \mathrm{I}}$ be a gluing covering of $U$. We define the $\mathbf{(o)Top}^{\op}$-gluing data functor $\gcov{\mathcal{U}}: \glI{I} \rightarrow \mathbf{(o)Top}^{\op}$ as follows:
\begin{enumerate}
\item $\Coi{\gcov{\subsm{\mathcal{U}}{0}}}{i}=\subsm{U}{i}$;
\item $\Coij{\gcov{\subsm{\mathcal{U}}{0}}}{i}{j}=\Uij{i}{j}$;
\item $\Coijk{\gcov{\subsm{\mathcal{U}}{0}}}{i}{j}{k}=\Uijk{i}{j}{k}$;
\item $\Cnij{\gcov{\subsm{\mathcal{U}}{1}}}{\etaij{i}{j}}=\iuv{\Uij{i}{j}}{\subsm{U}{i}}^{\op}$;
\item $\Cnij{\gcov{\subsm{\mathcal{U}}{1}}}{\tauij{i}{j}}$ is the canonical isomorphism from $\Uij{i}{j}$ to $\Uij{j}{i}$;
\item $\Cnij{\gcov{\subsm{\mathcal{U}}{1}}}{\etaijk{n}{i}{j}{k}}=\iuv{\Uijk{i}{j}{k}}{\Uij{i}{n}}^{\op}$;
\end{enumerate}
where $\Uij{i}{j} = U_i \times_U U_j$ and $\subsm{U}{i \wedge j \wedge k}:= U_i \times_U U_j\times_U U_k$, for all $i,j,k \in \mathrm{I}$.
Furthermore, we define $\subsm{\mathfrak{i}}{\mathcal{U}}^{\op}:=\{\iuv{\Coi{\gcov{\subsm{\mathcal{U}}{0}}}{a}}{X}\}_{a\in\Co{\glI{I}}}$. Then, $\gcov{\mathcal{U}}$ is a $\mathbf{(o)Top}^{\op}$-gluing data functor, and $U$ is a glued-up $\mathbf{(o)Top}^{\op}$-object along ${\gcov{\mathcal{U}}}$ through $\mathfrak{i}_{\mathcal{U}}^{\op}$.
\end{definition}

The following definition establishes the collection of all gluing coverings within the category of topological spaces.
\begin{definition}
We define $\mathbf{Cov}_{\mathbf{Gl}}(\mathbf{(o)Top})$ to be the set of all gluing (open) coverings of all topological space.\end{definition}
We will now demonstrate that the collection of gluing coverings constitutes a Grothendieck site within the category of topological spaces.
\begin{theorem}\label{sitee}
 $(\mathbf{(o)Top}, \mathbf{Cov}_{\mathbf{Gl}}(\mathbf{(o)Top}))$ is a Grothendieck site
\end{theorem}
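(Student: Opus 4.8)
\emph{Approach.} The plan is to show that $\mathbf{Cov}_{\mathbf{Gl}}(\mathbf{(o)Top})$ is a Grothendieck pretopology (basis) on $\mathbf{(o)Top}$, i.e. to verify: (i) for every isomorphism $\phi\colon V\to U$ the singleton $\{\phi\}$ is a covering; (ii) stability under base change; (iii) transitivity. Recall that a gluing (open) covering of $U$ is a family $\{\iota_i\colon U_i\to U\}_{i\in\mathrm{I}}$ of one-to-one (open) continuous maps with $\bigcup_{i\in\mathrm{I}}\iota_i(U_i)=U$. A preliminary remark, used only in the open case: a one-to-one open continuous map $\iota_i$ corestricts to a continuous open bijection onto $\iota_i(U_i)$, hence is an open embedding onto an open subspace of $U$; thus a gluing open covering is, up to homeomorphism, an ordinary open cover of $U$. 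Axiom (i) is immediate, since a homeomorphism is one-to-one, (open,) continuous and surjective; in particular $\{\mathrm{id}_U\}$ is a covering.

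\emph{Base change.} Given $\{\iota_i\colon U_i\to U\}_{i\in\mathrm{I}}\in\mathbf{Cov}_{\mathbf{Gl}}$ and a morphism $f\colon V\to U$, I would take $U_i\times_U V=\{(x,v)\colon\iota_i(x)=f(v)\}$ with projection $p_i\colon U_i\times_U V\to V$. In $\mathbf{Top}$ this pullback exists as a subspace of $U_i\times V$; $p_i$ is continuous, and it is one-to-one because $p_i(x,v)=p_i(x',v')$ forces $v=v'$ and $\iota_i(x)=f(v)=\iota_i(x')$, hence $x=x'$ by injectivity of $\iota_i$. In $\mathbf{oTop}$ one identifies $\iota_i$ with the inclusion of the open subset $V_i:=\iota_i(U_i)\subseteq U$, so that $U_i\times_U V\cong f^{-1}(V_i)$ is an open subspace of $V$ and $p_i$ becomes the open embedding $f^{-1}(V_i)\hookrightarrow V$; a short universal-property check shows this is indeed the pullback in $\mathbf{oTop}$. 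In both cases $\bigcup_i p_i(U_i\times_U V)=V$: for $v\in V$, $f(v)\in U=\bigcup_i\iota_i(U_i)$, so $f(v)=\iota_i(x)$ for some $i$ and $x\in U_i$, whence $(x,v)\in U_i\times_U V$ and $p_i(x,v)=v$. Therefore $\{p_i\colon U_i\times_U V\to V\}_{i\in\mathrm{I}}$ is a gluing (open) covering of $V$.

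\emph{Transitivity.} Given $\{\iota_i\colon U_i\to U\}_{i\in\mathrm{I}}\in\mathbf{Cov}_{\mathbf{Gl}}$ and, for each $i$, $\{\iota_{i,\lambda}\colon U_{i,\lambda}\to U_i\}_{\lambda\in\Lambda_i}\in\mathbf{Cov}_{\mathbf{Gl}}$, I would consider the family $\{\iota_i\circ\iota_{i,\lambda}\colon U_{i,\lambda}\to U\}_{(i,\lambda)\in\coprod_i\Lambda_i}$. Each composite is one-to-one, (open,) continuous as a composition of such maps, and $\bigcup_{(i,\lambda)}(\iota_i\circ\iota_{i,\lambda})(U_{i,\lambda})=\bigcup_i\iota_i\bigl(\bigcup_\lambda\iota_{i,\lambda}(U_{i,\lambda})\bigr)=\bigcup_i\iota_i(U_i)=U$. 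Hence it lies in $\mathbf{Cov}_{\mathbf{Gl}}$, which together with (i) and (ii) shows $(\mathbf{(o)Top},\mathbf{Cov}_{\mathbf{Gl}}(\mathbf{(o)Top}))$ is a Grothendieck site.

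\emph{Main obstacle.} The only non-formal point is the base-change axiom for $\mathbf{oTop}$: this category does not have all pullbacks (e.g. the diagonal $W\to W\times W$ computed topologically is not open, so a topological product of non-degenerate spaces is not their $\mathbf{oTop}$-product), so one must confirm that the pullbacks demanded by the pretopology — those along one-to-one open continuous maps — actually exist in $\mathbf{oTop}$; the open-embedding remark reduces them to preimages of open sets and settles this. Everything else follows from the elementary stability of injectivity, openness, continuity and "surjectivity of the union of images" under composition and base change. If a sieve-theoretic formulation is preferred, the sieves generated by these covering families then form a Grothendieck topology; and, via Lemma \ref{coverings} and Definition (Lemma) \ref{gluingcover}, the covering families can equivalently be described through the canonical gluing data functor $\gcov{\mathcal U}$, though the direct argument above is the most economical.
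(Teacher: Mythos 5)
Your proposal is correct and follows essentially the same route as the paper: a direct verification of the isomorphism, transitivity, and base-change axioms, using the same arguments (injectivity of the projection via injectivity of $\iota_i$, openness of the projection in the $\mathbf{oTop}$ case, and surjectivity of the union of images). The only notable difference is that you explicitly address the existence of the required pullbacks in $\mathbf{oTop}$ by reducing them to preimages of open sets, whereas the paper sidesteps this by including the existence of $U_i\times_U V$ as a hypothesis in its statement of the base-change axiom.
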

\begin{proof}
To establish that $(\mathbf{(o)Top}, \mathbf{Cov}_{\mathbf{Gl}}(\mathbf{(o)Top}))$ forms a Grothendieck site, we must demonstrate that $\mathbf{Cov}_{\mathbf{Gl}}(\mathbf{(o)Top})$ satisfies the criteria of a Grothendieck topology on $\mathbf{(o)Top}$. Specifically, we need to verify the following conditions:

\begin{enumerate}
    \item If $\varphi:V\rightarrow U$ is an isomorphism, then $\{\varphi:V\rightarrow U\}\in \mathbf{Cov}_{\mathbf{Gl}}(\mathbf{(o)Top})$.
    \item If $\{\subsm{\subsm{\iota}{i}: \subsm{U}{i}\rightarrow U\}}{i\!\in \! \mathrm{I}}\in \mathbf{Cov}_{\mathbf{Gl}}(\mathbf{(o)Top})$, and $\subsm{\{\subsm{\ell}{i,j}:\subsm{V}{i,j}\rightarrow \subsm{U}{i}\}}{i\!\in \! \mathrm{I}}\in \mathbf{Cov}_{\mathbf{Gl}}(\mathbf{(o)Top})$ for all $i\in \mathrm{I}$, then the family $\subsm{\{\subsm{\iota}{i}\circ \subsm{\ell}{i,j}:\subsm{V}{i,j}\rightarrow U\}}{i\!\in \! \mathrm{I}, j\in \subsm{\mathrm{J}}{i}}$ is in $\mathbf{Cov}_{\mathbf{Gl}}(\mathbf{(o)Top})$.
    \item If $\subsm{\{\subsm{\iota}{i}:\subsm{U}{i}\rightarrow U\}}{i\!\in \! \mathrm{I}}\in \mathbf{Cov}_{\mathbf{Gl}}(\mathbf{(o)Top})$, and $\varphi:V\rightarrow U$ is a morphism in $\Cn{\mathbf{(o)Top}}$, and $\subsm{U}{i}\subsm{\times}{U} V$ exists for all $i\in \mathrm{I}$, then $\subsm{\{\iuv{\subsm{U}{i}\subsm{\times}{U} V}{V}:\subsm{U}{i}\subsm{\times}{U} V\rightarrow V\}}{i\!\in \! \mathrm{I}}\in \mathbf{Cov}_{\mathbf{Gl}}(\mathbf{(o)Top})$.
\end{enumerate}
We now prove that $(1)$, $(2)$ and $(3)$ are satisfied.
\begin{enumerate}
    \item This condition holds trivially.
    \item We will demonstrate that the family $\subsm{\{\subsm{\iota}{i}\circ \subsm{\ell}{i,j}:\subsm{V}{i,j}\rightarrow U\}}{i\!\in \! \mathrm{I}, j\in \subsm{\mathrm{J}}{i}}$ belongs to $\mathbf{Cov}_{\mathbf{Gl}}(\mathbf{Top})$. Consider $i$ and $j$ from $\mathrm{I}$:
    \begin{itemize}
        \item $\subsm{\iota}{i}\circ \subsm{\ell}{i,j}$ is a one-to-one (open) continuous map since both $\subsm{\iota}{i}$ and $\subsm{\ell}{i,j}$ are one-to-one continuous maps, as per the definition of a covering.
        \item We have $U=\subsm{\cup}{i\!\in\! \mathrm{I}}\subsm{\iota}{i}(\subsm{U}{i})$ due to the fact that $\{\subsm{\subsm{\iota}{i}:\subsm{U}{i}\rightarrow U\}}{i\!\in \! \mathrm{I}}\in \mathbf{Cov}_{\mathbf{Gl}}(\mathbf{(o)Top})$. Now, for any $i\in \mathrm{I}$, $\subsm{U}{i}=\subsm{\cup}{j\!\in\! \subsm{\mathrm{J}}{i}}\subsm{\ell}{i,j}(\subsm{V}{i,j})$ as a consequence of $\subsm{\{\subsm{\ell}{i,j}:\subsm{V}{i,j}\rightarrow \subsm{U}{i}\}}{i\!\in \! \mathrm{I}}\in \mathbf{Cov}_{\mathbf{Gl}}(\mathbf{(o)Top})$. Combining these results, we can deduce that:
        $$U=\subsm{\cup}{i\!\in\! \mathrm{I}}\subsm{\iota}{i}(\subsm{U}{i})=\subsm{\cup}{i\!\in\! \mathrm{I}}\subsm{\iota}{i}(\subsm{\cup}{j\!\in\! \subsm{\mathrm{J}}{i}}\subsm{\ell}{i,j}(\subsm{V}{i,j}))=\subsm{\cup}{i\!\in\! \mathrm{I}, j\!\in\! \subsm{\mathrm{J}}{i}}\subsm{\iota}{i}\circ \subsm{\ell}{i,j}(\subsm{V}{i,j}).$$
    \end{itemize}
    \item Consider $\subsm{\{\subsm{\iota}{i}:\subsm{U}{i}\rightarrow U\}}{i\!\in \! \mathrm{I}}\in \mathbf{Cov}_{\mathbf{Gl}}(\mathbf{(o)Top})$, and let $\varphi:V\rightarrow U$ be a morphism in $\Cn{\mathbf{Top}}$. Additionally, assume that $\subsm{U}{i}\subsm{\times}{U} V$ exists for all $i\in \mathrm{I}$. Now, for any $i\in \mathrm{I}$, we have:
    \begin{itemize}
        \item $\iuv{\subsm{U}{i}\subsm{\times}{U} V}{V}$ is one-to-one. To establish this, consider two pairs $(u_1, v_1)$ and $(u_2, v_2) \in \subsm{U}{i}\subsm{\times}{U} V$ such that $\iuv{\subsm{U}{i}\subsm{\times}{U} V}{V} (u_1, v_1)=\iuv{\subsm{U}{i}\subsm{\times}{U} V}{V} (u_2, v_2)$, which is equivalent to $v_1=v_2$. Now, since $(u_1, v_1)$ and $(u_2, v_2) \in \subsm{U}{i}\subsm{\times}{U} V$, we have $\iota_i ( u_1) = \varphi( v_1)$ and $\iota_i ( u_2) = \varphi( v_2)$. Given that $v_1=v_2$, we conclude that $\iota_i ( u_1) = \varphi( v_1) = \varphi( v_2) = \iota_i ( u_2)$. Since $\iota_i$ is one-to-one, it follows that $u_1=u_2$, thus proving the result. When we are in $\mathbf{oTop}$,  $\iuv{\subsm{U}{i}\subsm{\times}{U} V}{V}$ is open by definition of the topology on a pullback.
        \item We now prove that $V=\subsm{\cup}{i\!\in\! \mathrm{I}}\iuv{\subsm{U}{i}\subsm{\times}{U} V}{V}(\subsm{U}{i}\subsm{\times}{U} V)$. The inclusion $\subsm{\cup}{i\!\in\! \mathrm{I}}\iuv{\subsm{U}{i}\subsm{\times}{U} V}{V}(\subsm{U}{i}\subsm{\times}{U} V)\subseteq V$ is trivial by definition. On the other hand, if $v\in V$, we define $\chi: U\subsm{\times}{U} V \rightarrow V$ as the canonical isomorphism that sends $(u,v)$ to $v$. Since $U= \subsm{\cup}{i\!\in\! \mathrm{I}}\subsm{\iota}{i}(\subsm{U}{i})$ due to the definition of a covering, we can write:
        $$U\subsm{\times}{U} V=(\subsm{\cup}{i\!\in\! \mathrm{I}}\subsm{\iota}{i}(\subsm{U}{i}))\subsm{\times}{U} V=\subsm{\cup}{i\!\in\! \mathrm{I}}(\subsm{\iota}{i}(\subsm{U}{i})\subsm{\times}{U} V).$$
        Therefore, there exists an $i\in \mathrm{I}$ such that $v\in \chi(\subsm{\iota}{i}(\subsm{U}{i})\subsm{\times}{U} V)$. In other words, there exists an $x\in \subsm{U}{i}$ such that $(\subsm{\iota}{i}(x),v) \in \subsm{\iota}{i}(\subsm{U}{i})\subsm{\times}{U} V$, and $v=\chi (\subsm{\iota}{i}(x),v)$. Finally, it can be observed that $v=\iuv{\subsm{U}{i}\subsm{\times}{U} V}{V}(x,v)$, as $(\subsm{\iota}{i}(x),v) \in \subsm{\iota}{i}(\subsm{U}{i})\subsm{\times}{U} V$.
    \end{itemize}
\end{enumerate}
\end{proof}

\begin{remark} \label{admia}
Let $X$ be a topological space. Consider the category $\subsm{\mathbf{Sp}}{X}$ ($\subsm{\mathbf{Ob}}{X}$), which is a subcategory of $\mathbf{(o)Top}$ with objects as (open) subspaces of $X$ and morphisms as inclusions. $Q_{\mathbf{G}}$ is not always a glued-up object over $\mathbf{G}$; However, the gluing data functor $\mathbf{G}$ that sends $[i,j]$ to $\Goi{\mbf{G}}{i}\cap \Goi{\mbf{G}}{j}$ admit $Q_{\mathbf{G}}$ as glued-up object. Moreover, the glued-up object over such functors is a covering in the usual sense, i.e., $\subsm{\{U_i\}}{i\!\in\! \mathrm{I}}$ where $U_i \subop X$ for all $i \in \mathrm{I}$ such that $U = \subsm{\cup}{i\! \in\! \mathrm{I}} U_i$.

In both cases, these coverings still form a Grothendieck topology in their respective categories. Furthermore, all these coverings are also coverings in $\mathbf{Cov}(\mathbf{Top})$.

Similarly, if we restrict the category of topological spaces to the subcategory $\mathbf{eTop}$ where objects are topological spaces and maps are topological embeddings, $Q_{\mathbf{G}}$ is not always a glued-up object over $\mathbf{G}$.

For all these categories, we would have that $\subsm{Q}{\mathbf{G}}$ will be a cone over $\mathbf{G}$, but the problem would be to prove it is a terminal cone. In $\mathbf{eTop}$, a map from $\subsm{Q}{\mathbf{G}}$ always exists as open continuous maps, as we have proven in Lemma \ref{otop}, but the map $\mu$ constructed in the proof of the Lemma might not necessarily be one-to-one. However, if we assume that there is a topological space $U$ and maps $\phi_i : \Goi{\mbf{G}}{i}\rightarrow U$ topological embeddings such that $\Goij{\mbf{G}}{i}{j}:= \Goi{\mbf{G}}{i}\times_{U} \Goi{\mbf{G}}{j}$, $ \Gnij{\mbf{G}}{\etaij{i}{j}}^{\op}:=\iuv{\Goij{\mbf{G}}{i}{j}}{\Goi{\mbf{G}}{i}}$, $ \Gnij{\mbf{G}}{\tauij{i}{j}}^{\op}$ is the canonical isomorphism from $\Goi{\mbf{G}}{i}\times_{U} \Goi{\mbf{G}}{j}$ to $\Goi{\mbf{G}}{j}\times_{U} \Goi{\mbf{G}}{i}$, and $\Gnij{\mbf{G}}{\etaijk{n}{i}{j}{k}}^{\op}:=\iuv{ \Goi{\mbf{G}}{i}\times_{U} \Goi{\mbf{G}}{j}\times_{U} \Goi{\mbf{G}}{k}}{\Goi{\mbf{G}}{i}\times_{U} \Goi{\mbf{G}}{n}}$ for all $i,j, k\in \mathrm{I}$ and $n\in \{ j,k\}$, we obtain $Q_{\mathbf{G}}$ is a glued-up object over such a gluing data functor $\mathbf{G}$.

Indeed, under these assumptions and using the notation as in the proof of Theorem \ref{gluingtop}, $(3) \Rightarrow (1)$, we can prove as done before, that for all $i,j \in \mathrm{I}$,
$${\dindpi{\iota}{Q}{i}{}} (  \iuv{\Goij{\mbf{G}}{i}{j}}{\Goi{\mbf{G}}{i}}(\Goij{\mbf{G}}{i}{j}))={\dindpi{\iota}{Q}{j}{}} (  \iuv{\Goij{\mbf{G}}{j}{i}}{\Goi{\mbf{G}}{j}}(\Goij{\mbf{G}}{j}{i}))={\dindpi{\iota}{Q}{i}{}} (\Goi{\mbf{G}}{i})\cap {\dindiv{\iota}{Q'}{j}{}} (\Goi{\mbf{G}}{j}).$$

We can deduce from these equalities that $\mu$ is a one-to-one map.

Let $q, q' \in \subsm{Q}{\mbf{G}}$ such that $\mu(q) = \mu(q')$. By property $(3)(d)$, we know that there exist $i, j \in \mathrm{I}$, $x \in \Goi{\mbf{G}}{i}$, and $y \in \Goi{\mbf{G}}{j}$ such that ${\dindiv{\iota}{Q}{\mbf{G}}{i}}(x) = q$ and ${\dindiv{\iota}{Q}{\mbf{G}}{j}}(y) = q'$. We choose such elements. It follows that $\mu({\dindiv{\iota}{Q}{\mbf{G}}{i}}(x)) = \mu ({\dindiv{\iota}{Q}{\mbf{G}}{j}}(y))$. By the definition of $(Q', \subsm{\iota}{Q'})$, we have ${\dindpi{\iota}{Q}{i}{}}(x) = {\dindpi{\iota}{Q}{j}{}}(y)$. We can deduce that ${\dindpi{\iota}{Q}{i}{}}(x) \in {\dindpi{\iota}{Q}{i}{}}(\Goi{\mbf{G}}{i})\cap {\dindpi{\iota}{Q}{j}{}}(\Goi{\mbf{G}}{j})={\dindpi{\iota}{Q}{i}{}} (  \iuv{\Goij{\mbf{G}}{i}{j}}{\Goi{\mbf{G}}{i}}(\Goij{\mbf{G}}{i}{j}))$.

Thus there is a $(v,w) \in \Goij{\mbf{G}}{i}{j}$ such that ${\dindpi{\iota}{Q}{i}{}}(  \iuv{\Goij{\mbf{G}}{i}{j}}{\Goi{\mbf{G}}{i}}(v,w))= {\dindpi{\iota}{Q}{i}{}}( v)= {\dindpi{\iota}{Q}{i}{}}(x) $. From the injectivity of ${\dindpi{\iota}{Q}{i}{}}$ we can deduce that $x=v$. Since 
$${\dindpi{\iota}{Q}{i}{}}(  \iuv{\Goij{\mbf{G}}{i}{j}}{\Goi{\mbf{G}}{i}}(v,w))={\dindpi{\iota}{Q}{j}{}} (\iuv{\Goij{\mbf{G}}{j}{i}}{\Goi{\mbf{G}}{j}}(v,w))={\dindpi{\iota}{Q}{j}{}} (w) = {\dindpi{\iota}{Q}{j}{}}(y),$$ from the injectivity of ${\dindpi{\iota}{Q}{j}{}}$, we deduce that $w=y$. Thus, we deduce that $(x, y) \in \Goij{\mbf{G}}{i}{j}$.

Therefore,
$${\dindiv{\iota}{Q}{\mbf{G}}{i}}(x) = {\dindiv{\iota}{Q}{\mbf{G}}{i}}(  \iuv{\Goij{\mbf{G}}{i}{j}}{\Goi{\mbf{G}}{i}}(x, y))={\dindiv{\iota}{Q}{\mbf{G}}{j}}(  \iuv{\Goij{\mbf{G}}{j}{i}}{\Goi{\mbf{G}}{j}}(x, y))={\dindiv{\iota}{Q}{\mbf{G}}{j}}(y).$$

This proves that $Q_{\mathbf{G}}$ is a glued-up object over $\mathbf{G}$.

\end{remark}

\bibliographystyle{abnt-num}

\end{document}